\newtheorem{theorem}{Theorem}[section]
\newtheorem{lemma}[theorem]{Lemma}
\newtheorem{proposition}[theorem]{Proposition}
\newtheorem{corollary}[theorem]{Corollary}
\theoremstyle{definition}
\newtheorem{definition}[theorem]{Definition}
\newtheorem{remark}[theorem]{Remark}
\numberwithin{equation}{section}
\newcommand{\rd}{{\,\rm d}}
\newcommand{\e}{{\rm e}}
\newcommand{\N}{{\mathbb N}}
\newcommand{\R}{{\mathbb R}}
\newcommand{\C}{{\mathbb C}}
\newcommand{\Z}{{\mathbb Z}}
\newcommand\beq{\begin{equation}}
\newcommand\eeq{\end{equation}}
\newcommand{\dist}{\mathrm{dist}}
\newcommand\re{\mathrm{Re}}
\newcommand\im{\mathrm{Im}}
\newcommand\I{\mathrm{i}}
\newcommand{\beqnt}{\begin{equation*}}
\newcommand{\eeqnt}{\end{equation*}}
\newcommand{\set}[2]{\{#1 : #2 \}}
\newcommand{\sgn}{\operatorname{sgn}}
\DeclareMathOperator{\supp}{supp}
\DeclareMathOperator{\dom}{dom}
\DeclareMathOperator{\Lip}{Lip}
\DeclareMathOperator{\Op}{Op}
\begin{document}

\title[Resolvent Estimates for magnetic Schr\"odinger operators]{$L^p$ resolvent estimates for magnetic Schr\"odinger operators with unbounded background fields}

\author{Jean-Claude Cuenin}
\address{Mathematisches Institut, Ludwig-Maximilians-Universit\"at M\"unchen, 80333 Munich, Germany}
\email{cuenin@math.lmu.de}

\author{Carlos E.\ Kenig}
\address{Department of Mathematics, University of Chicago, Chicago, IL 60637, USA}
\email{cek@math.uchicago.edu}

\begin{abstract}
We prove $L^p$ and smoothing estimates for the resolvent of magnetic Schr\"odinger operators. We allow electromagnetic potentials that are small perturbations of a smooth, but possibly unbounded background potential. As an application, we prove an estimate on the location of eigenvalues of magnetic Schr\"odinger and Pauli operators with complex electromagnetic potentials. 
\end{abstract}

\maketitle

\section{Introduction}

Resolvent estimates for Schr\"odinger operators play a decisive role in numerous areas in spectral and scattering theory, as well as partial differential equations. In particular, resolvent estimates which are uniform in the spectral parameter are intimately connected with dispersive and smoothing estimates for the corresponding (time-dependent) Schr\"odinger equation, as observed by Kato \cite{Kato1965}. 

As a general rule, resolvent estimates that hold up to the spectrum (usually called a limiting absorption principle) are   
associated with global in time Strichartz and smoothing estimates for the Schr\"odinger flow. Results in this category are usually obtained by considering a decaying electromagnetic potential as a perturbation of the free Laplacian, see for example \cite{GeorgievEtAl2007} for small perturbations and \cite{ErdoganEtAl2008,ErdoganEtAl2009,FanelliVega2009,D'AnconaEtAl2010,Garcia2011,Garcia2015} for large perturbations.

On the other hand, resolvent estimates that are uniform only up to a $\mathcal{O}(1)$ distance to the spectrum are associated with local in time estimates. This is usually due to the presence of eigenvalues or resonances that prevent the dispersion of the flow. Potentials in this situation are usually unbounded. Prominent examples here are the harmonic oscillator (quadratic electric potential) and the constant magnetic field (linear vector potential). 
We mention \cite{Fujiwara1980,Yajima1991,YajimaZhang2004,Doi2005,RobbianoZuily2008,D'AnconaFanelli2009} for estimates involving unbounded potentials. 

There is a big gap in the regularity and decay conditions for the electromagnetic potential 
between the two scenarios. In the first, the potentials can usually be quite rough, but have sufficient decay at infinity.
In the second case, unbounded potentials are allowed but they are usually assumed to be smooth. Very little is known in the intermediate case. Our resolvent estimate is a step in this direction.


The paper is organized as follows. In Section \ref{section Assumptions and main result} we state the assumptions on the potentials and the resolvent estimate in the simplest case, with a uniform bound with respect to the spectral parameter. In Section \ref{section Lq'Lq and smoothing estimates} we prove the resolvent estimate for the unperturbed operator. In Section \ref{section Proof main theorem, general case} we use a perturbative argument to prove the estimate in the general case.
In the final Section \ref{section complex-valued potentials} we state a more precise version of the resolvent estimate and give an application to eigenvalue bounds for Schr\"odinger operators with complex-valued potentials.\\

\underline{{\bf Notation}} 
\begin{itemize}
\item $\langle x\rangle=(1+|x|^2)^{1/2}$.
\item $\langle D\rangle$ is the Fourier multiplier with symbol $\langle\xi\rangle$.
\item $X=(x,\xi)\in\R^{2n}$. 
\item $e_s(X):=\langle X\rangle^s$ and $E_{s}:=e_s^W(x,D)$; see also Appendix \ref{Appendix psdos}.
\item $\mathcal{D}(\R^n)=C_c^{\infty}(\R^n)$, and $\mathcal{D}'(\R^n)$ is the space of distributions.
\item $\mathcal{S}(\R^n)$ is the Schwartz space, and $\mathcal{S}'(\R^n)$ is the space of tempered distributions.
\item $\mathcal{B}(X,Y)$ is the space of bounded linear operators between Banach spaces $X$ and $Y$.
\item $A\lesssim B$ if there exist a constant $C>0$ (depending only on fixed quantities) such that $A\leq CB$.
\item $\langle u,v\rangle=\int_{\R^n}u(x)\overline{v}(x)\rd x$ for $u,v\in \mathcal{S}(\R^n)$. 
\item If $X$ is a Banach space densely and continuously embedded in $L^2(\R^n)$, we identify $L^2(\R^n)$ with a dense subspace of $X'$. Thus, the duality pairing $\langle\cdot,\cdot\rangle_{X,X'}$ extends the $L^2$-scalar product $\langle\cdot,\cdot\rangle$. This is meant when we write $X\subset L^2(\R^n)\subset X'$.
\item $\sigma(P)$ is the spectrum of $P$.
\item $\dom(P)$ is the domain of $P$.
\end{itemize} 

\section{Assumptions and main result}\label{section Assumptions and main result}

We consider the Schr\"odinger operator
\begin{align}\label{P}
P=(-\I\nabla+A(x))^2+V(x),\quad \dom(P)=\mathcal{D}(\R^n)\subset L^2(\R^n),\quad n\geq 2.
\end{align}
Here, $A:\R^n\to \R^n$ is the vector potential and $V:\R^n\to \R$ is the electric potential. 
In the following, $\epsilon>0$ is a yet undetermined constant that will later be chosen sufficiently small

\vspace{10pt}

\underline{\bf Assumptions on the potentials}
Let $A=A_0+ A_1$ and $V=V_0+W+ V_1$ and assume that the following assumptions hold.
\begin{enumerate}
\item[(A1)] $A_0\in C^{\infty}(\R^n,\R^n)$ and for every $\alpha\in\N^n$, $|\alpha|\geq 1$, there exist constants $C_{\alpha},\epsilon_{\alpha}>0$ such that
\begin{align}
|\partial_x^{\alpha} A_0(x)|\leq C_{\alpha},\quad|\partial^{\alpha}B_0(x)|\leq C_{\alpha}\langle x\rangle^{-1-\epsilon_{\alpha}},\quad x\in\R^n.\label{assumptions on A_0}
\end{align}
Here, $B_0=(B_{0,j,k})_{j,k=1}^n$ is the magnetic field, i.e.
\begin{align*}
B_{0,j,k}(x)=\partial_jA_{0,k}(x)-\partial_kA_{0,j}(x).
\end{align*}
\item[(A2)] $V_0\in C^{\infty}(\R^n,\R)$ and for every $\alpha\in\N^n$, $|\alpha|\geq 2$, there exist constants $C_{\alpha}>0$ such that
\begin{align}
|\partial_x^{\alpha} V_0(x)|&\leq C_{\alpha},\quad x\in\R^n.\label{assumptions on V_0}
\end{align}
\item[(A3)] $W\in L^{\infty}(\R^n,\R)$.
\item[(A4)] $A_1\in L^{\infty}(\R^n,\R)$ and there exists $\delta>0$ such that
\begin{align*}
|A_1(x)|\lesssim \epsilon\langle x\rangle^{-1-\delta}\quad\mbox{for almost every }x\in\R^n.
\end{align*}
Moreover, assume that one of the following additional assumptions holds: 
\begin{enumerate}
\item[(A4a)] $A_1\in \Lip(\R^n,\R^n)$ and
\begin{align*}
|\nabla A_1(x)|\lesssim \epsilon\langle x\rangle^{-1-\delta}\quad\mbox{for almost every }x\in\R^n.
\end{align*}
\item[(A4b)] There exists $\delta'\in (0,\delta)$ such that $\langle x\rangle^{1+\delta'} A_1\in \dot{W}^{\frac{1}{2},2n}(\R^n;\R^n)$, with $\|\langle x\rangle^{1+\delta'}A_1\|_{\dot{W}^{\frac{1}{2},2n}}\lesssim \epsilon$.
\end{enumerate}
\item[(A5)] Assume that $V_1\in L^{r}(\R^n,\R)$, with $\|V_1\|_{L^r}\lesssim \epsilon$, for some $r\in(1,\infty]$ if $n=2$ and $r \in[n/2,\infty]$ if $n\geq 3$.
\end{enumerate}

\begin{remark}
We can relax the assumption \eqref{assumptions on V_0} in the same way as in \cite{KochTataru2005Hermite}.
\begin{enumerate}
\item[(A2')] $V_0\in C^{2}(\R^n,\R)$ and for every $\alpha\in\N^n$, $|\alpha|=2$, there exist constants $C_{\alpha}$ such that
\begin{align}
|\partial_x^{\alpha} V_0(x)|\leq C_{\alpha},\quad x\in\R^n.\label{assumptions on V_0'}
\end{align}
\end{enumerate}
To see this, we decompose $V_0$ into its low-frequency and its hight-frequency part, $V_0=V_0^{\rm low}+V_0^{\rm high}$. Here, $V_0^{\rm low}:=\chi(D)V_0$, 
where $\chi\in \mathcal{D}(\R^n)$ is supported in $B(0,2)$ and $\chi\equiv 1$ in $B(0,1)$.
Then, by Bernstein inequalities $V_0^{\rm low}$ satisfies (A2). On the other hand, $V_0^{\rm high}\in L^{\infty}(\R^n)$, so this term can be absorbed into $W$.

It would be natural to also try to relax the smoothness assumption on $A_0$ to $C^1(\R^n,\R^n)$. However, if we just split such an $A_0$ into high and low frequency parts, then $A_0^{\rm high}$ will have no decay, and thus it cannot be absorbed into the perturbative part. Moreover, even if it could be absorbed, then it would not be small. 
\end{remark}

\begin{remark}
Assumption (A4b) was used in \cite{ErdoganEtAl2009}, where it was also remarked that a condition similar to (A4a), but with $|\nabla A_1(x)|\lesssim\langle x\rangle^{-2-\delta}$, would imply (A4b). Here we state both conditions, because neither is weaker than the other. There is an obvious trade-off between decay and regularity. 
\end{remark}


We will consider $P$ as a small perturbation of the Schr\"odinger operator
\begin{align}\label{P_0 tilde}
\widetilde{P}_0=(-\I\nabla+A_0(x))^2+\widetilde{V}_0(x),\quad \dom(\widetilde{P}_0)=\mathcal{D}(\R^n)
\end{align}
where $\widetilde{V}:=V_0+W$.
In the case $W=0$, we also write
\begin{align}\label{P_0}
P_0=(-\I\nabla+A_0(x))^2+V_0(x),\quad \dom(P_0)=\mathcal{D}(\R^n).
\end{align}

Our resolvent estimate involves the following spaces. Let $X$ be the completion of $\mathcal{D}(\R^n)$ with respect to the norm 
\begin{center}
\fbox{$\|u\|_X:=\|u\|_{L^2}+\|\langle x\rangle^{-\frac{1+\mu}{2}}E_{1/2} u\|_{L^2}+\|u\|_{L^{q}}
$}
\end{center}
where $0<\mu\leq \delta$ is fixed. Then its topological dual $X'$ is the space of distributions $f\in \mathcal{D}'(\R^n)$ such that the norm
\begin{center}
\fbox{$\|f\|_{X'}:=\inf_{f=f_1+f_2+f_3}\left(\|f_1\|_{L^2}+\|\langle x\rangle^{\frac{1+\mu}{2}}E_{-1/2} f_2\|_{L^2}+\|f_3\|_{L^{q'}}\right)$}
\end{center}
is finite. Here, $q=2r'$, i.e.
\begin{align*}
\begin{cases}
q\in [2,\infty) \quad &\mbox{if  } n=2,\\
q\in [2,2n/(n-2)]\quad &\mbox{if  } n\geq 3,
\end{cases}
\end{align*}
Our main result is the following theorem.

\begin{theorem}\label{thm resolvent estimate X'X}
Assume that $A$, $V$ satisfy Assumptions (A1)--(A5). Moreover, let $a>0$ be fixed. Then there exists $\epsilon_0>0$ such that for all $\epsilon<\epsilon_0$, we have the estimate
\begin{align}\label{eq. resolvent estimate main theorem}
\|u\|_{X}\leq C
\|(P-z)u\|_{X'}
\end{align}
for all $z\in\C$ with $|\im z|\geq a$ and for all $u\in \mathcal{D}(\R^n)$. The constants $\epsilon_0,C$ depend on $n$, $q$, $\mu$, $\delta$, $\delta'$, $a$, $\|W\|_{L^{\infty}}$ and on finitely many seminorms $C_{\alpha}$ in \eqref{assumptions on A_0} and \eqref{assumptions on V_0}. 
\end{theorem}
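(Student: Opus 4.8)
The plan is to prove the estimate by a perturbative argument built on top of the resolvent estimate for the unperturbed operator $\widetilde P_0$. First I would establish the estimate \eqref{eq. resolvent estimate main theorem} with $P$ replaced by $\widetilde P_0 = (-\I\nabla + A_0)^2 + \widetilde V_0$, where $\widetilde V_0 = V_0 + W$; by the remark on (A2'), the bounded part $W$ (as well as the high-frequency part of $V_0$) can be bundled into a bounded potential, and one shows that adding a small $L^\infty$ potential does not destroy the estimate — this is where the factor $a>0$ matters, since $\|W\|_{L^\infty}$ can be large but the distance to the spectrum is at least $a$, so $W$ is absorbed at the cost of enlarging $a$ to $a/2$, or more precisely by using that $\|Wu\|_{X'} \lesssim \|W\|_{L^\infty}\|u\|_{L^2} \lesssim \|W\|_{L^\infty}\|u\|_X$ together with an interpolation/smallness trick. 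The estimate for $P_0$ itself — the genuinely unperturbed operator with smooth background $A_0, V_0$ — is what Section \ref{section Lq'Lq and smoothing estimates} supplies (via pseudodifferential calculus, the operators $E_s$, and $L^{q'}\to L^q$ plus smoothing bounds), so I would cite that as the base case.

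Next I would set up the perturbation. Write $P = \widetilde P_0 + R$, where $R$ collects all the small remainders: the cross terms and quadratic term coming from $A_1$, namely $-\I(\nabla\cdot A_1 + A_1\cdot\nabla) + |A_1|^2 + 2A_0\cdot A_1$, plus the potential perturbation $V_1$. The goal is to show that for $u\in\mathcal D(\R^n)$,
\begin{align}\label{eq:perturbation-bound}
\|Ru\|_{X'} \leq C\epsilon\,\|u\|_X.
\end{align}
Granting \eqref{eq:perturbation-bound}, the theorem follows immediately: from the base estimate $\|u\|_X \leq C_0\|(\widetilde P_0 - z)u\|_{X'} \leq C_0\|(P-z)u\|_{X'} + C_0\|Ru\|_{X'} \leq C_0\|(P-z)u\|_{X'} + C_0 C\epsilon\|u\|_X$, and choosing $\epsilon_0$ so that $C_0 C\epsilon_0 \leq 1/2$, one absorbs the last term on the left. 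So the whole theorem reduces to proving \eqref{eq:perturbation-bound} term by term.

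The term-by-term analysis is the technical heart. For $V_1$: by Hölder, $\|V_1 u\|_{L^{q'}} \lesssim \|V_1\|_{L^r}\|u\|_{L^q}$ with $1/q' = 1/r + 1/q$, which is exactly the relation $q = 2r'$ encoded in the definition of the spaces, so this maps $X\to X'$ with norm $\lesssim \epsilon$ using the $L^q$ component of $\|u\|_X$ and the $L^{q'}$ component of $\|f\|_{X'}$. For the quadratic and mixed terms $|A_1|^2 u$ and $A_0\cdot A_1 u$: since $|A_1|\lesssim\epsilon\langle x\rangle^{-1-\delta}$ and $A_0$ is at most linearly growing (by (A1), $\nabla A_0$ is bounded, so $|A_0(x)|\lesssim\langle x\rangle$), these are controlled by weighted $L^2\to L^2$ bounds, putting them in the middle ($E_{-1/2}$-weighted) component of $X'$ against the corresponding component of $X$ — one needs $\langle x\rangle^{-1-\delta}\cdot\langle x\rangle \lesssim \langle x\rangle^{-\mu}\cdot\langle x\rangle^{(1+\mu)/2}\cdot\langle x\rangle^{(1+\mu)/2}\cdot(\text{something controlling }E_{1/2})$, and here $\mu\leq\delta$ is exactly what gives the room. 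The genuinely delicate term is the first-order part $-\I(\nabla\cdot A_1 + A_1\cdot\nabla)u$. Under (A4a) one can move the derivative onto $A_1$ (using $|\nabla A_1|\lesssim\epsilon\langle x\rangle^{-1-\delta}$) and onto $u$, pairing $A_1\nabla u$ against the smoothing/weighted component — this is why $E_{1/2}$ and the weight $\langle x\rangle^{-(1+\mu)/2}$ appear in $\|u\|_X$, since $\nabla u$ costs roughly half a derivative relative to what the smoothing estimate controls. Under (A4b) instead one uses the fractional Sobolev bound on $\langle x\rangle^{1+\delta'}A_1$ together with a Kato–Ponce / fractional Leibniz estimate and Sobolev embedding ($\dot W^{1/2,2n}\hookrightarrow L^{2n/(n-1)}$ in dimension... $n$) to estimate $A_1\nabla u$ in a dual space; this is the argument from \cite{ErdoganEtAl2009}. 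I expect this first-order magnetic term — matching the regularity/decay of $A_1$ in (A4a)/(A4b) to precisely the function spaces $X$, $X'$, and making sure the half-derivative bookkeeping via $E_{1/2}$ works out — to be the main obstacle; the rest is Hölder and weighted $L^2$ estimates.
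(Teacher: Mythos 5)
Your overall architecture --- establish the bound for $\widetilde P_0$, write $P=\widetilde P_0+L$ with $L$ the first-order-plus-potential perturbation, prove $\|L\|_{\mathcal{B}(X,X')}\lesssim\epsilon$ term by term, and absorb via a Neumann-type argument --- is exactly the paper's, and your term-by-term analysis of $L$ (H\"older for $V_1$ with $q=2r'$; weighted $L^2$ bounds for $A_1^2$ and $A_0\cdot A_1$ using $|A_0(x)|\lesssim\langle x\rangle$; half-derivative bookkeeping with $E_{\pm1/2}$ and the commutator bounds $[f,\langle D\rangle^{1/2}]\in\mathcal{B}(L^2)$ for $f$ Lipschitz or $f\in\dot W^{1/2,2n}$ to handle $A_1\cdot\nabla u$) matches the paper's Lemma~\ref{L bounded} closely. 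Two steps, however, are glossed over in a way that leaves genuine gaps.

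First, the base case for $P_0$ does not follow from Section~\ref{section Lq'Lq and smoothing estimates} alone. Since $X=Y\cap L^q$ and $X'=Y'+L^{q'}$, the bound $\|u\|_X\lesssim\|(P_0-z)u\|_{X'}$ is equivalent to \emph{four} estimates, and Theorems~\ref{thm LpLp'} and~\ref{thm smoothing} supply only the two diagonal ones ($L^{q'}\to L^q$ and $Y'\to Y$). The cross estimates $\|u\|_Y\lesssim\|(P_0-z)u\|_{L^{q'}}$ and $\|u\|_{L^q}\lesssim\|(P_0-z)u\|_{Y'}$ are a nontrivial additional step: the paper obtains the first by running the positive commutator inequality~\eqref{eq. positive commutator} against the $L^q$--$L^{q'}$ pairing, using $\lambda^W\in\mathcal{B}(L^q)$ and the already-proved $L^{q'}\to L^q$ bound, and the second by a duality argument that itself requires a density lemma (Appendix~\ref{appendix proof duality argument}). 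Second, your treatment of $W$ is not viable as stated: the naive absorption $\|Wu\|_{X'}\le\|W\|_{L^\infty}\|u\|_{L^2}\le\|W\|_{L^\infty}\|u\|_X$ cannot be hidden on the left because $C_0\|W\|_{L^\infty}$ need not be small, and ``enlarging $a$ to $a/2$'' does not help --- for fixed $a$ the best available gain on the $L^2$ component is a factor $|\im z|^{-1/2}\le a^{-1/2}$, independent of $\|W\|_{L^\infty}$. What the paper actually does is re-run the proofs with $W$ built into the operator: $\widetilde P_0$ is still selfadjoint, so \eqref{trivial L2L2 estimate with imaginary part} holds verbatim, and in the Strichartz/spectral-projection argument the extra term $W\widetilde{\Pi}_{[k,k+1]}u$ enters only through an $L^2_x$-norm over a fixed time interval, producing the harmless factor $1+\|W\|_{L^\infty}$; likewise $[W,\lambda^W]$ is trivially $L^2$-bounded in the commutator estimate. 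With these two repairs your plan coincides with the paper's proof.
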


\begin{remark}
In Section \ref{section complex-valued potentials} we will state a more precise version of the estimate~\eqref{eq. resolvent estimate main theorem} that takes into account the dependence of $C$ on $|\im z|$ for large values of $|\im z|$. We will also allow $V_1$, $A_1$ to be complex-valued.
\end{remark}

\begin{remark}
Although we always assume that $a>0$ is fixed, it can be seen by inspection of the proof that the constant in \eqref{eq. resolvent estimate main theorem} is $C=\mathcal{O}(a^{-1})$ as $a\to 0$. Moreover, one could replace the condition $|\im z|\geq a>0$ by the similar condition $\dist(z,\sigma(\widetilde{P}_0))\geq a'>0$ or, a fortiori, $\dist(z,\sigma(P_0))\geq a'+\|W\|_{L^{\infty}}$.  
\end{remark}

\begin{remark}
It is instructive to consider the example $P=-\Delta$ (i.e.\ $A=V=0$). Assume that $\re z>0$. Then, by a scaling argument, the estimate \eqref{eq. resolvent estimate main theorem} implies that
\begin{align}\label{uniform Sobolev Laplacian}
\|u\|_{L^q}\leq C \epsilon^{\frac{n}{2}(1/q'-1/q)-1}\|(-\Delta-1\pm \I\epsilon)u\|_{L^{q'}}
\end{align}
for all $\epsilon>0$. In the case $1/q-1/q'=2/n$, this is a special case of the uniform Sobolev inequality in \cite{KenigRuizSogge1987}. 
\end{remark}

\begin{remark}
It is clear that if $A,V\neq 0$, a uniform estimate of the form \eqref{uniform Sobolev Laplacian} for $1/q-1/q'=2/n$ cannot hold in general, due to the possible presence of eigenvalues.
\end{remark}

\section{$L^{q'}\to L^{q}$ and smoothing estimates}\label{section Lq'Lq and smoothing estimates}

In this section, we will establish the $L^{q'}\to L^q$ and the smoothing estimate for $P_0$. These will be the main ingredients in the proof of Theorem \ref{thm resolvent estimate X'X}. We follow the general approach of Koch and Tataru in \cite[Section 4]{KochTataru2009}, where a version of the resolvent estimate \eqref{eq. resolvent estimate main theorem} was proved for the Hermite operator. In fact, the bound~\eqref{eq. resolvent estimate main theorem} follows in this special case by combining Proposition~4.2, Proposition~4.6 and Proposition~4.7 in \cite{KochTataru2009}.

We start with the $L^{q'}\to L^q$ estimate. The proof of Theorem \ref{thm LpLp'} below follows the same arguments as that of Proposition 4.6 in \cite{KochTataru2009} for the Hermite operator, see also \cite[Section 2]{KochTataru2005Hermite}. Note, however, that the symbol of $P_0$ does not satisfy the bounds (3) in \cite{KochTataru2005Hermite}, which implies the short-time dispersive estimate for the propagator in their case. Here, we use results of Yajima \cite{Yajima1991}, see also \cite{MR2172692}.

\begin{theorem}[$L^{q'}\to L^q$ estimate]\label{thm LpLp'}
Assume that $A_0$, $V_0$ satisfy Assumptions (A1)--(A2). Let $q\in[2,\infty)$ if $n=2$ and $q\in [2,2n/(n-2)]$ if $n\geq 3$, and let $a>0$ be fixed.
Then there exists $C_0>0$ such that for all $z\in\C$ with $|\im z|\geq a$ and for all $u\in \mathcal{D}(\R^n)$, we have the estimate
\begin{align}\label{eq. Lq'Lq in thm}
|\im z|^{1/2}\|u\|_{L^2}+\|u\|_{L^q}\leq C_0
\|(P_0-z)u\|_{L^{q'}}.
\end{align}
The constant $C_0$ depends on $n$, $q$, $a$, and on finitely many seminorms $C_{\alpha}$ in \eqref{assumptions on A_0} and \eqref{assumptions on V_0}. 
\end{theorem}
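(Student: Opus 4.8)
The plan is to follow the TT* / spectral-measure strategy of Koch–Tataru, reducing the fixed-energy resolvent bound \eqref{eq. Lq'Lq in thm} to a short-time dispersive estimate for the Schrödinger propagator $\e^{-\I t P_0}$. First I would use spectral calculus to write the resolvent $(P_0-z)^{-1}$, for $z$ with $|\im z|\geq a$, as an oscillatory integral of the propagator: $(P_0-z)^{-1} = \pm\I\int_0^{\pm\infty} \e^{\I t z}\e^{-\I t P_0}\,\rd t$ (with the sign chosen according to $\sgn(\im z)$ so that the integral converges absolutely, picking up the factor $\e^{-|t||\im z|}$). The goal is then to estimate this in $\mathcal{B}(L^{q'},L^q)$. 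I would split the $t$-integral into a short-time regime $|t|\leq T$ and a long-time regime $|t|>T$ for a fixed small $T$ depending only on the seminorms of $A_0,V_0$.

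For the short-time part, the key input is Yajima's construction \cite{Yajima1991} (see also \cite{MR2172692}) of the parametrix for $\e^{-\I t P_0}$ under Assumptions (A1)–(A2): for $|t|$ small the propagator is an oscillatory integral operator with a nondegenerate phase, yielding the dispersive bound $\|\e^{-\I t P_0}\|_{\mathcal{B}(L^1,L^\infty)}\lesssim |t|^{-n/2}$. Interpolating with the trivial $L^2\to L^2$ bound and applying the Keel–Tao / Stein–Tomas machinery (abstract $TT^*$), the operator $\int_0^{\pm T}\e^{\I tz}\e^{-\I tP_0}\,\rd t$ maps $L^{q'}\to L^q$ with a bound uniform in $z$ precisely for the stated range of $q$ — in $n\geq 3$ the endpoint $q=2n/(n-2)$ corresponds to the Keel–Tao endpoint Strichartz estimate, and $q<2n/(n-2)$ (resp. $q<\infty$ when $n=2$) is the non-endpoint case obtained by interpolation. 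The factor $\e^{-|t||\im z|}$ only helps here, so this contribution is $\mathcal{O}(1)$.

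For the long-time part $|t|>T$, the propagator no longer has good dispersion (this is exactly why one cannot take $z$ up to the spectrum), but now the exponential damping $\e^{-|t||\im z|}\leq \e^{-|t| a}$ saves the day: one bounds $\|\e^{-\I tP_0}\|_{\mathcal{B}(L^2,L^2)}=1$ and uses $\|u\|_{L^q}\lesssim \|u\|_{L^2}^{\theta}\|\langle D\rangle^s u\|_{L^2}^{1-\theta}$-type Sobolev embedding after paying a fixed number of derivatives, which the propagator controls on $\mathcal{D}(\R^n)$ via the energy estimates for $P_0$; the resulting $t$-integral $\int_T^\infty \e^{-ta}\,\rd t = a^{-1}\e^{-Ta}$ is finite. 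Combining the two regimes gives $\|u\|_{L^q}\leq C_0\|(P_0-z)u\|_{L^{q'}}$ with $C_0$ depending only on $n,q,a$ and finitely many seminorms. The additional term $|\im z|^{1/2}\|u\|_{L^2}$ on the left is the standard gain from the resolvent being $\mathcal{O}(|\im z|^{-1})$ in $\mathcal{B}(L^2,L^2)$ combined with the $L^{q'}\to L^q$ bound by complex interpolation (or directly: $|\im z|\|u\|_{L^2}^2 \leq \im\langle (P_0-z)u,u\rangle \leq \|(P_0-z)u\|_{L^{q'}}\|u\|_{L^q}$, then absorb $\|u\|_{L^q}$).

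The main obstacle is the short-time dispersive estimate: unlike the Hermite case treated in \cite{KochTataru2005Hermite}, the symbol of $P_0$ does not satisfy the structural bounds that would give a clean WKB parametrix, so one genuinely needs Yajima's more delicate parametrix construction for magnetic Schrödinger operators with bounded magnetic field and at-most-quadratically-growing electric potential, and one must check that his hypotheses are implied by (A1)–(A2) and that the $L^1\to L^\infty$ bound holds with constants controlled by finitely many seminorms $C_\alpha$. A secondary technical point is making the $TT^*$ argument genuinely uniform in $z$ (the oscillatory factor $\e^{\I tz}$ must be treated as harmless, which it is since $|\e^{\I tz}|=\e^{-t\,\mathrm{sgn}(\im z)\im z}\leq 1$ on the relevant half-line), and handling the endpoint $q=2n/(n-2)$ via Keel–Tao rather than naive interpolation.
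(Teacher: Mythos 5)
Your short-time analysis and your closing duality trick for the $|\im z|^{1/2}\|u\|_{L^2}$ term are sound (the latter, $|\im z|\,\|u\|_{L^2}^2=|\im\langle (P_0-z)u,u\rangle|\leq \|(P_0-z)u\|_{L^{q'}}\|u\|_{L^q}$, is in fact a slicker route to the first half of \eqref{eq. Lq'Lq in thm} than the paper's spectral sum, \emph{provided} the $L^{q'}\to L^q$ bound is already in hand). The genuine gap is the long-time regime $|t|>T$ of your representation $(P_0-z)^{-1}=\pm\I\int_0^{\pm\infty}\e^{\I tz}\e^{-\I tP_0}\,\rd t$. The damping $\e^{-a|t|}$ only gives you an integrable factor if the integrand is bounded in the \emph{right operator norm}, namely $\mathcal{B}(L^{q'},L^q)$, uniformly for $|t|>T$; but for $|t|$ bounded away from $0$ the propagator of an operator with unbounded potential has no uniform $L^{q'}\to L^q$ bound (for the harmonic oscillator it refocuses at $t\in\pi\Z$). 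Your proposed substitute --- Sobolev embedding $\|v\|_{L^q}\lesssim\|\langle D\rangle^s v\|_{L^2}$ plus ``energy estimates'' --- fails on two counts: (i) for unbounded $V_0$ or $A_0$ the flow $\e^{-\I tP_0}$ does not preserve the flat Sobolev spaces $H^s$ uniformly in $t$ (it preserves $\dom(P_0^{s/2})$, which mixes decay and regularity), and (ii) even granting derivative control, the resulting estimate would require $\|\langle D\rangle^s f\|_{L^2}$ of the input $f=(P_0-z)u$ on the right-hand side, which is not controlled by $\|(P_0-z)u\|_{L^{q'}}$, so the argument cannot close to an $L^{q'}\to L^q$ bound.

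The paper circumvents the long-time problem entirely by working with the unit spectral projections $\Pi_{[k,k+1]}$ of $P_0$ rather than with a time integral. Applying the local-in-time Strichartz estimates (which follow from the same short-time dispersive bound of Yajima you invoke) to the stationary solutions $v=\e^{-\I tk}\Pi_{[k,k+1]}u$ gives the uniform bound $\|\Pi_{[k,k+1]}\|_{\mathcal{B}(L^2,L^q)}\lesssim 1$, hence by duality and $TT^*$ the bounds \eqref{dual and TT* version of O(1) bound spectral projections}; summing $\sup_{\lambda\in[k,k+1]}|\lambda-z|^{-2}$ over $k$ (which is $\mathcal{O}(|\im z|^{-1})$) then yields $\|(P_0-z)^{-1}\|_{\mathcal{B}(L^{q'},L^2)}\lesssim|\im z|^{-1/2}$, and a second application of Strichartz to $\e^{-\I tz}u$ upgrades this to the $L^{q'}\to L^q$ bound. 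In other words, the almost-orthogonality of the spectral windows in $L^2$ plays the role that your long-time integral was supposed to play, and only short-time ($|t|\leq T$) information about the propagator is ever used. To repair your proposal you would either need to adopt this spectral decomposition or supply a genuinely uniform-in-$t$ bound for $\e^{-\I tP_0}$ between Lebesgue (or weighted/harmonic Sobolev) spaces compatible with $L^{q'}$ data, which is not available under (A1)--(A2).
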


\begin{proof}
By \cite[Theorem 6]{Yajima1991}, $P_0$ is essentially selfadjoint.
By abuse of notation we continue to denote the selfadjoint extension of $P_0$ by the same symbol. It then follows that $\sigma(P_0)\subset\R$ and 
\begin{align}\label{trivial L2L2 estimate with imaginary part}
\|(P_0-z)^{-1}\|_{\mathcal{B}(L^2)}\leq \frac{1}{|\im z|},
\end{align}
where $(P_0-z)^{-1}$ is the $L^2$-resolvent.
By \cite[Theorem 4]{Yajima1991}, we have the short-time dispersive estimate
\begin{align}\label{dispersive estimate L}
\|\e^{\I tP_0}\|_{\mathcal{B}(L^1,L^{\infty})}\lesssim |t|^{-n/2},\quad t\leq T\ll 1.
\end{align}
By now standard abstract arguments \cite{KeelTao1998}, the full range of Strichartz estimates for the Schr\"odinger equation holds: If $(p_1,q_1)$, $(p_2,q_2)$ are sharp Schr\"odinger-admissible, i.e. if
\begin{align}\label{Strichartz admissible}
\frac{2}{p_i}+\frac{n}{q_i}=\frac{n}{2},\quad p_i\in[2,\infty],\quad q_i\in[2,2n/(n-2)],\quad (n,p_i,q_i)\neq (2,2,\infty),
\end{align}
and if $u$ satisfies the Schr\"odinger equation for $P_0$,
\begin{align}\label{Schroedinger equation for L}
\I\partial_t u-P_0 u=f,\quad u|_{t=0}=u_0,
\end{align}
then
\begin{align}\label{Strichartz estimates}
\|u\|_{L^{p_1}_t([0,T];L^{q_1}_x)}\lesssim \|u_0\|_{L^2_x}+\|f\|_{L^{p_2'}_t([0,T];L^{q_2'}_x)}.
\end{align}
For the non-endpoint case $q\neq 2n/(n-2)$, this also follows from \cite[Theorem 1]{Yajima1991}.
We now fix a pair $(p,q)$ as in \eqref{Strichartz admissible} and with $q$ as in the theorem for the rest of the proof.
Let $\Pi_{[k,k+1]}$ be the spectral projection of $P_0$ onto the interval $[k,k+1]$, $k\in\Z$. For any $u\in L^2(\R^n)$, the function $v(x,t):=\e^{-\I tk}\Pi_{[k,k+1]}u(x)$ satisfies the estimate 
\begin{align}\label{eq. energy estimate}
\|(\I\partial_t-P_0)v\|_{L^{\infty}_t([0,T];L^2_x)}\leq \|u\|_{L^2_x}.
\end{align}
Applying the Strichartz estimates \eqref{Strichartz estimates} with $(p_1,q_1)=(p,q)$ and $(p_2,q_2)=(\infty,2)$ to $v(x,t)$, it follows that
\begin{align}\label{O(1) bound spectral projections}
\|\Pi_{[k,k+1]}u\|_{L^{q}}\lesssim \|u\|_{L^2}.
\end{align}
This argument can be found in \cite{KochTataru2005Hermite}, see Corollary 2.3 there.
Since $\Pi_{[k,k+1]}^*=\Pi_{[k,k+1]}=\Pi_{[k,k+1]}^2$, the dual as well as the $TT^*$ version of \eqref{O(1) bound spectral projections} yield
\begin{align}\label{dual and TT* version of O(1) bound spectral projections}
\|\Pi_{[k,k+1]}u\|_{L^2}\lesssim \|u\|_{L^{q'}},\quad\|\Pi_{[k,k+1]}u\|_{L^q}\lesssim \|u\|_{L^{q'}}.
\end{align}
Using the first inequality in \eqref{dual and TT* version of O(1) bound spectral projections}, orthogonality of the spectral projections and the spectral theorem, we see that for any $u\in L^2(\R^n)\cap L^{q'}(\R^n)$, we have
\begin{equation}\begin{split}
\|(P_0-z)^{-1}u\|_{L^2}^2&=\sum_{k\in\Z}\|(P_0-z)^{-1}\Pi_{[k,k+1]}u\|_{L^2}^2\\
&\leq \sum_{k\in\Z}\sup_{\lambda\in[k,k+1]}|\lambda-z|^{-2}\|\Pi_{[k,k+1]}u\|_{L^2}^2
\lesssim  |\im z|^{-1}\|u\|_{L^{q'}}^2.
\end{split}
\end{equation}
Here, we estimated the sum by
\begin{align*}
\sum_{k\in\Z}\sup_{\lambda\in[k,k+1]}|\lambda-z|^{-2}\leq \sum_{|k-\lfloor{\re z}\rfloor|\leq 3}\frac{1}{a|\im z|}+\sum_{|k-\lfloor{\re z}\rfloor|> 3}\frac{1}{(\lambda-\re z)^2+(\im z)^2}
\end{align*}
where $\lfloor{\re z}\rfloor$ is the integer part of $\re z$.
Setting $k':=k-\lfloor{\re z}\rfloor\in\Z$ in the second sum, we have $|k'|>3$, and
\begin{align*}
|\lambda-\re z|\geq |k'|-|\lambda-k|-|\re z-\lfloor{\re z}\rfloor|\geq |k'|-2>1. 
\end{align*}
Changing variables $k\to k'$, it follows that the second sum is $\mathcal{O}(|\im z|^{-1})$.
By a density argument, we have
\begin{align}\label{eq. Lq'L2}
\|(P_0-z)^{-1}\|_{\mathcal{B}(L^{q'},L^2)}\lesssim |\im z|^{-1/2}.
\end{align}
This proves the first half of \eqref{eq. Lq'Lq in thm}.

We now apply the Strichartz estimates \eqref{Strichartz estimates} to 
$v(x,t)=\e^{-\I tz}u(x)$, assuming that $\im z<0$. Otherwise, we choose $v(x,t)=\e^{\I tz}u(x)$. Note that $v$ satisfies the Schr\"odinger equation \eqref{Schroedinger equation for L} with
\begin{align*}
f(x,t)=\e^{-\I tz}(z-P_0)u(x).
\end{align*}
Applying the Strichartz estimates \eqref{Strichartz estimates} with $(p_1,q_1)=(p_2,q_2)=(p,q)$, 
we obtain
\begin{align}\label{almost resolvent estimate with Strichrtz}
\|u\|_{L^{q}}\lesssim \|u\|_{L^2}+\|(P_0-z)u\|_{L^{q'}}.
\end{align}
Combining \eqref{eq. Lq'L2} and \eqref{almost resolvent estimate with Strichrtz},
we arrive at
\begin{align*}
\|u\|_{L^{q}}\lesssim \|(P_0-z)u\|_{L^{q'}}.
\end{align*}
This proves the second  half of \eqref{eq. Lq'Lq in thm}.
\end{proof}

We now establish the smoothing estimate for $P_0$. We follow Doi \cite{Doi2005}, who considered the corresponding smoothing estimate for the propagator, in a more general situation (time-dependent potentials and non-trivial metric) than we do here. One way to prove the smoothing estimate would be to appeal to the corresponding smoothing estimate for the propagator $\e^{\I tP_0}$ \cite[Theorem 2.8]{Doi2005} and use the same strategy as in the proof of Theorem \ref{thm LpLp'} to deduce the resolvent smoothing estimate. For the sake of clarity, we decided to provide a direct proof of the resolvent smoothing estimate for the special case considered here. 
Let us also mention that Robbiano and Zuily \cite{RobbianoZuily2008}, generalizing a result of Yajima and Zhang \cite{YajimaZhang2004}, proved a smoothing estimate for the propagator similar to that of \cite{Doi2005} under partly more general ($V_0$ can grow superquadratically and $A_0$ superlinearly) and partly more restrictive assumptions (they impose stronger symbol type conditions on $V_0$ and $A_0$). Although the technique of~\cite{RobbianoZuily2008} is simpler than that of~\cite{Doi2005}, it is not directly applicable under our assumptions.

For our purpose, we do not need the full strength of the calculus used in \cite{Doi2005}.
It will be sufficient to use the following metrics, 
\begin{align}\label{metrics g0 g1}
g_0=\rd x^2+\frac{\rd \xi^2}{\langle X\rangle ^2},\quad
g_1=\frac{\rd x^2}{\langle x\rangle ^2}+\frac{\rd \xi^2}{\langle \xi\rangle ^2},
\end{align}
where $X=(x,\xi)\in \R^{2n}$. We refer to Appendix \ref{Appendix psdos} for more details about the calculus we use here and for the notation.

\begin{lemma}\label{lemma commutator estimate}
Assume (A1)--(A2). Then there exists $\lambda\in S_1(1,\langle x\rangle,g_0)$ such that the following hold.
\begin{enumerate}
\item There exist constants $C,c>0$ such that
\begin{align*}
-\{|\xi|^2,\lambda(x,\xi)\}\geq c\langle x\rangle^{-1-\mu}e_{1/2}(x,\xi)^2-C,\quad (x,\xi)\in \R^{2n}.
\end{align*}
\item $[P_0,\lambda^W]-\frac{1}{\I}\{|\xi|^2,\lambda\}^w\in \Op^W(S(1,g_0))$.
\item We have the positive commutator estimate
\begin{align}\label{eq. positive commutator}
\|\langle x\rangle^{-\frac{1+\mu}{2}}E_{1/2} u\|_{L^{2}}^2\lesssim \langle -\I[P_0,\lambda^W]u,u\rangle+\|u\|_{L^2}^2.
\end{align}
\end{enumerate}
\end{lemma}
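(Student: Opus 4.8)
The plan is to construct $\lambda$ explicitly via a positive commutator (Mourre-type) argument adapted to the degenerate metric $g_0$, mimicking Doi's construction but in the simpler resolvent setting. First I would look for $\lambda$ of the form $\lambda(x,\xi) = -\psi(x) \cdot \xi / \langle x\rangle^{?}$ type symbols (an escape function), the prototype being $\lambda(x,\xi) = \arctan\!\big(\langle x\rangle^{-\mu} x\cdot\xi / \langle X\rangle\big)$ or a smooth variant, so that the Poisson bracket $\{|\xi|^2,\lambda\} = 2\xi\cdot\nabla_x\lambda$ produces the positive weight $\langle x\rangle^{-1-\mu}\langle X\rangle$. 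The key computation for part (1) is that $\xi\cdot\nabla_x(x\cdot\xi) = |\xi|^2$, which after incorporating the cutoff $\langle x\rangle^{-\mu}$ and the normalization $\langle X\rangle^{-1}$ yields a term comparable to $\langle x\rangle^{-1-\mu}(|\xi|^2 + 1)/\langle X\rangle \gtrsim \langle x\rangle^{-1-\mu}\langle X\rangle = \langle x\rangle^{-1-\mu} e_{1/2}^2$ in the region where $|\xi|$ is not too small; in the complementary region $|\xi|\lesssim 1$ one has $e_{1/2}^2 \lesssim \langle x\rangle$ and the weight $\langle x\rangle^{-1-\mu} e_{1/2}^2 \lesssim \langle x\rangle^{-\mu}$ is bounded, which is absorbed into the $-C$. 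The error terms coming from $\nabla_x$ hitting $\langle x\rangle^{-\mu}$ and $\langle X\rangle^{-1}$ are lower order (they gain decay and do not produce the full $e_{1/2}^2$ growth) and can also be dominated by $\langle x\rangle^{-1-\mu} e_{1/2}^2 - C$ after shrinking $c$; here the hypothesis $\mu \le \delta$ is not yet used but the structure $\langle x\rangle^{-1-\mu}$ matches the weight in the space $X$.

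Next, for part (2), I would verify that $\lambda \in S_1(1,\langle x\rangle,g_0)$, i.e. $\lambda$ is bounded with each $\partial_\xi$ gaining a factor $\langle X\rangle^{-1}$ and each $\partial_x$ costing $\langle x\rangle$ in the relevant sense (this is where the $\arctan$ or a bounded smoothing of $x\cdot\xi/(\langle x\rangle^\mu\langle X\rangle)$ is needed — a bare linear symbol would not lie in $S_1(1,\cdot,g_0)$). Then I expand the commutator $[P_0,\lambda^W]$. Writing $P_0 = |D|^2 + (\text{lower order terms involving } A_0, V_0)$, the principal part $[|D|^2,\lambda^W]$ equals $\frac{1}{\I}\{|\xi|^2,\lambda\}^w$ up to a symbol in $S(1,g_0)$ by the Weyl calculus (the sub-principal/error terms of the composition $|\xi|^2 \# \lambda - \lambda \# |\xi|^2$ beyond the Poisson bracket lie two orders down, i.e. in $S(\langle X\rangle^{2} \cdot \langle x\rangle^{-1} \cdot \langle X\rangle^{-1} \cdot m^{-1},g_0)$, and one checks this is $S(1,g_0)$ using that $|\xi|^2 \in S(\langle X\rangle^2, g_0)$ only degenerately — care is needed here). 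The contributions $[2A_0\cdot D + A_0^2 + V_0, \lambda^W]$: using (A1)–(A2), $\nabla A_0, \nabla^2 V_0$ are bounded and $B_0$ decays, so after commuting, each of these terms is controlled in $\Op^W(S(1,g_0))$; the crossed terms with the $\arctan$-type $\lambda$ only improve things. This is the step I expect to be the main obstacle: carefully tracking that all the symbol errors — in particular those generated by the non-symbol-bounded parts of $V_0$ (it may grow linearly) and the linearly-growing $A_0$ — genuinely land in $S(1,g_0)$ and not in some larger class. The saving grace is exactly that the first derivatives of $A_0$ are bounded and second derivatives of $V_0$ are bounded, so the "bad" growth of $A_0,V_0$ only enters through the principal-type brackets which are handled by (1), while the commutator remainders see only the good (bounded) derivatives.

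Finally, for part (3), I would combine (1) and (2). By (2), $-\I[P_0,\lambda^W] = \{|\xi|^2,\lambda\}^w + R$ with $R \in \Op^W(S(1,g_0))$, hence $\langle R u, u\rangle \lesssim \|u\|_{L^2}^2$ by $L^2$-boundedness of $S(1,g_0)$ operators. By (1), the symbol $\{|\xi|^2,\lambda\} + C \geq c\langle x\rangle^{-1-\mu} e_{1/2}^2 = c(\langle x\rangle^{-\frac{1+\mu}{2}} e_{1/2})^2$, and the weight $\langle x\rangle^{-\frac{1+\mu}{2}} e_{1/2} \in S(\langle x\rangle^{-\frac{1+\mu}{2}}\langle X\rangle^{1/2}, g_0)$ is a positive elliptic-type symbol for this order, so by the sharp Gårding inequality (or Fefferman–Phong) applied in the calculus associated to $g_0$,
\begin{align*}
\langle \{|\xi|^2,\lambda\}^w u, u\rangle + C\|u\|_{L^2}^2 \gtrsim \|\langle x\rangle^{-\frac{1+\mu}{2}} E_{1/2} u\|_{L^2}^2 - C'\|u\|_{L^2}^2,
\end{align*}
after comparing $(\langle x\rangle^{-\frac{1+\mu}{2}} e_{1/2})^w$ with $\langle x\rangle^{-\frac{1+\mu}{2}} E_{1/2}$ modulo lower-order terms via the composition calculus. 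Rearranging gives \eqref{eq. positive commutator}. I would make sure to state which version of Gårding's inequality from Appendix \ref{Appendix psdos} is invoked and that the metric $g_0$ is admissible (slowly varying, temperate) so that the calculus applies.
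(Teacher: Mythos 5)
Your overall strategy is the one the paper follows (itself a condensation of Doi's argument): build a bounded escape function $\lambda$ whose Hamiltonian derivative along $|\xi|^2$ dominates $\langle x\rangle^{-1-\mu}\langle X\rangle$, use the $S_1(m,\langle x\rangle,g_0)$ calculus to show that $[P_0,\lambda^W]$ differs from $\frac1\I\{|\xi|^2,\lambda\}^W$ by an $\Op^W(S(1,g_0))$ operator (the growth of $A_0,V_0$ entering only through their bounded derivatives), and then conclude by sharp G\aa rding plus composition/adjoint calculus. Parts (2) and (3) of your sketch match the paper's proof essentially step for step.

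The genuine gap is in part (1): the concrete candidate $\lambda=\arctan\bigl(\langle x\rangle^{-\mu}x\cdot\xi/\langle X\rangle\bigr)$ does not work, for two reasons. First, the $\mu$-weight is placed incorrectly: with $g=\langle x\rangle^{-\mu}x\cdot\xi/\langle X\rangle$ one has $|g|\lesssim\langle x\rangle^{1-\mu}$, so in bad directions $\arctan'(g)=(1+g^2)^{-1}$ is only bounded below by $\langle x\rangle^{-2(1-\mu)}$, and the main term of the bracket is $\gtrsim\langle x\rangle^{\mu-2}\langle X\rangle$ rather than $\langle x\rangle^{-1-\mu}\langle X\rangle$; this suffices only for $\mu\geq 1/2$. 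The correct device (used in the paper, following Doi) is to take the profile in the variable $a=x\cdot\xi/\langle\xi\rangle$ with derivative $\sim\langle a\rangle^{-1-\mu}$ (equivalently the term $M_0-\langle a\rangle^{-\mu}$ in \eqref{lambda}), which generates the weight $\langle x\rangle^{-1-\mu}$ for \emph{every} $\mu>0$ because $\langle a\rangle\leq\langle x\rangle$. Second, and more seriously, without a cutoff to the region $\langle x\rangle\lesssim\langle\xi\rangle$ your symbol is not even in $S(1,g_0)$: $\partial_\xi g$ contains the term $\langle x\rangle^{-\mu}x/\langle X\rangle$, which is of size $\langle x\rangle^{1-\mu}\langle X\rangle^{-1}$, and at points with $x\perp\xi$ (so $g=0$, $\arctan'(g)=1$) this violates the required bound $\partial_\xi\lambda=O(\langle X\rangle^{-1})$ when $|x|\gg|\xi|$; in that same region the error terms of the bracket are not bounded below. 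This is exactly why the paper's $\lambda$ carries the factor $\chi(r)$ with $r=\langle x\rangle/\langle\xi\rangle$, and why the membership $\lambda\in S_1(1,\langle x\rangle,g_0)$ is reduced there to the inequality $\langle x\rangle\leq\langle\xi\rangle$ on $\supp\chi(r)$. Your region analysis (small versus large $|\xi|$) misses this: the dangerous regime is $1\ll|\xi|\ll|x|$, where the target lower bound is harmless (it is $O(\langle x\rangle^{-\mu})$) but your symbol estimates and the sign of the error terms break down. With the profile and the cutoff corrected, the rest of your argument goes through as written.
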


\begin{proof}
(1) The claim follows from \cite[Lemma 8.3]{Doi2005}. We give a sketch of the proof for the simpler case considered here. Let $\psi,\chi\in C^{\infty}(\R)$ be such that $0\leq \psi,\chi\leq 1$, $\supp(\psi)\subset [1/4,\infty)$, $\psi(t)=1$ for $t\in [1/2,\infty)$, $\psi'\geq 0$, $\supp(\chi)\subset (-\infty,1]$ and $\chi(t)=1$ for $t\leq 1/2$. Further, set $\psi_+(t)=\psi(t)$, $\psi_-(t)=\psi(-t)$ and
\begin{align*}
\psi_0(t)=1-\psi_+(t)-\psi_-(t),\quad \psi_1(t)=\psi_-(t)-\psi_+(t)=-\sgn(t)\psi(|t|).
\end{align*}
We define the function $\lambda:\R^{2n}\to\R$ by
\begin{align}\label{lambda}
-\lambda=\left( \theta\psi_0(\theta)-(M_0-\langle a\rangle^{-{\mu}})\psi_1(\theta)\right)\chi(r),
\end{align}
where $M_0>2$ is a constant, $a(x,\xi)=\frac{x\cdot\xi}{\langle\xi\rangle}$, $\theta(x,\xi)=\frac{a(x,\xi)}{\langle x\rangle}$ and $r(x,\xi)=\frac{\langle x\rangle}{\langle\xi\rangle}$. The claim that $\lambda\in S_1(1,\langle x\rangle,g_0)$ follows from the fact that $\langle x\rangle\leq \langle\xi\rangle$ on the support of $\chi(r)$.
We write $h_0(\xi)=|\xi|^2$ and denote by $H_{h_0}=2\xi\cdot\nabla_x$ the corresponding Hamiltonian vector field. Observe that on the support of $\psi_0(\theta)$, we have $\theta\leq 1/2$. This implies that
\begin{align}\label{Hh0theta}
-H_{h_0}\theta=\frac{2}{\langle x\rangle}\left(\frac{|\xi|^2}{\langle \xi\rangle}-\frac{x\cdot\xi}{\langle x\rangle}\theta\right)\geq \frac{\langle \xi\rangle}{\langle x\rangle}-2.
\end{align}
It can then be shown that
\begin{align*}
-H_{h_0}\lambda
&=\left((H_{h_0}\theta)\psi_0(\theta)+\mu\langle a\rangle^{-\mu-2}|a|(H_{h_0}a)\psi_1(\theta)\right)\chi(r)\\
&+(H_{h_0}\theta)(M_0-\langle a\rangle^{-\mu}-|\theta|)(\psi_+'(\theta)-\psi_-'(\theta))\chi(r)\\
&+\left(\theta\psi_0(\theta)-(M_0-\langle a\rangle^{-\mu})\psi_1(\theta)\right)\chi'(r)(H_{h_0}r)\\
  &\geq \left((H_{h_0}\theta)\psi_0(\theta)+\mu\langle a\rangle^{-\mu-2}|a|(H_{h_0}a)\psi_1(\theta)\right)\chi(r)-C_1\\
&\geq c_1\left(\langle\xi\rangle\langle x\rangle^{-1}\psi_0(\theta)+\langle  x\rangle^{-\mu-1}\langle\xi\rangle\psi(|\theta|)\right)\chi(r)-C_2\\  
&\geq c_2\langle x\rangle^{-1-\mu} e_{1/2}(x,\xi)^2-C_3.
\end{align*}
In particular, we used that $\psi_+'(t)-\psi_-'(t)\geq 0$ and $\psi_0(t)+\psi(|t|)=1$.

(2) We have, by slight abuse of notation,
\begin{align*}
[P_0,\lambda^W]-\frac{1}{\I}\{P_0,\lambda\}^W=\frac{1}{\I}A+\frac{1}{\I}B
\end{align*}
where
\begin{align*}
A=\I(P_0\#\lambda-\lambda\#P_0)^W-\{P_0,\lambda\}^W,\quad B=\{P_0,\lambda\}^W-\{|\xi|^2,\lambda\}^W.
\end{align*}
Lemma \ref{Lemma 3.4 Doi} and Proposition \ref{Proposition membership to symbol classes} imply that $A,B\in \Op^W(S(1,g_0))$.

(3) follows from (1)--(2) together with Corollary \ref{corollary boundedness on L2}, Theorem \ref{theorem sharp Garding} and the calculus for adjoints (Proposition \ref{Proposition selfadjointness of Weyl quantization}) and compositions (Theorem \ref{theorem composition}). 
\end{proof}

To state the following theorem it will be convenient to introduce the spaces $Y\supset X$ and $Y'\subset X'$ with norms
\begin{align}
\|u\|_Y&:=\|u\|_{L^2}+\|\langle x\rangle^{-\frac{1+\mu}{2}}E_{1/2} u\|_{L^2},\label{definition of Y}\\
\|f\|_{Y'}&:=\inf_{f=f_1+f_2}\left(\|f_1\|_{L^2}+\|\langle x\rangle^{\frac{1+\mu}{2}}E_{-1/2} f_2\|_{L^2}\right).
\end{align}
Note that $X=Y\cap L^q$ and $X'=Y'+L^{q'}$.

\begin{theorem}[Smoothing estimate]\label{thm smoothing}
Let $\mu>0$, and let $a>0$ be fixed. Then for all $z\in\C$ with $|\im z|\geq a$ and for all $u\in \mathcal{D}(\R^n)$, we have the estimate
\begin{align}\label{eq. smoothing estimate thm}
\|u\|_{Y}\leq C_0\|(P_0-z)u\|_{Y'}.
\end{align}
The constant $C_0$ depends on $n$, $\mu$, $a$, and on finitely many seminorms $C_{\alpha}$ in \eqref{assumptions on A_0} and \eqref{assumptions on V_0}. 
\end{theorem}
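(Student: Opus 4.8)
The plan is to prove the a priori estimate \eqref{eq. smoothing estimate thm} by a standard positive commutator (Mourre-type) argument based on Lemma \ref{lemma commutator estimate}, combined with the trivial $L^2$-resolvent bound coming from selfadjointness of $P_0$. First I would record that $P_0$ is essentially selfadjoint (by \cite[Theorem 6]{Yajima1991}, as in the proof of Theorem \ref{thm LpLp'}), so that $\sigma(P_0)\subset\R$ and $\|(P_0-z)u\|_{L^2}\geq|\im z|\,\|u\|_{L^2}\geq a\|u\|_{L^2}$ for all $u\in\mathcal{D}(\R^n)$ and all $z$ with $|\im z|\geq a$. This already controls the $\|u\|_{L^2}$ part of $\|u\|_Y$, and moreover bounds $\|u\|_{L^2}$ by $\|(P_0-z)u\|_{Y'}$, since $\|\cdot\|_{Y'}\geq$ (any of) its constituent norms is not quite right — rather, one uses the decomposition $f=f_1+f_2$ and estimates the $L^2$-pairing of $u$ against $f_1$ and against $f_2$ separately, the latter via $|\langle u,f_2\rangle|=|\langle \langle x\rangle^{-\frac{1+\mu}{2}}E_{1/2}u,\, E_{-1/2}^{*}\langle x\rangle^{\frac{1+\mu}{2}}f_2\rangle|$ up to lower-order (bounded) errors from the calculus; this already indicates that the $Y$-norm on the left and the $Y'$-norm on the right are the natural dual pair for the commutant $\lambda^W$.

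The heart of the argument is the commutator identity. Let $\lambda=\lambda(x,\xi)\in S_1(1,\langle x\rangle,g_0)$ be the symbol furnished by Lemma \ref{lemma commutator estimate}. For $u\in\mathcal{D}(\R^n)$ and $z$ with $|\im z|\geq a$ set $f=(P_0-z)u$. Since $\lambda$ is real and $\lambda^W$ is (formally) selfadjoint, $\langle -\I[P_0,\lambda^W]u,u\rangle=\langle -\I[P_0-z,\lambda^W]u,u\rangle=2\,\re\langle -\I\lambda^W u,\, f\rangle$ (the $z$-term drops because $-\I[z,\lambda^W]=0$ after taking the real part, using that $\im z$ contributes a selfadjoint multiple of $\lambda^W$ whose pairing is real — more precisely $-\I[\overline{z},\,\cdot\,]^{*}$ bookkeeping gives $\langle -\I[P_0-z,\lambda^W]u,u\rangle = 2\re\langle \lambda^W u, -\I(P_0-\overline z)u\rangle$, and the extra $-\I(\overline z - z)$ piece is $2(\im z)\langle\lambda^W u,u\rangle$, which is real and must be moved to the left or absorbed — I would instead take the imaginary part to peel that term off if its sign is favorable, or simply bound $|\im z|\,|\langle\lambda^W u,u\rangle|\lesssim |\im z|\|u\|_{L^2}^2$ using $L^2$-boundedness of $\lambda^W$, Corollary \ref{corollary boundedness on L2}, and then reuse the already-established $L^2$ bound $\|u\|_{L^2}\lesssim a^{-1}\|f\|_{L^2}$ — but $|\im z|$ is only bounded below, not above, so the cleaner route is: absorb the $(\im z)\langle\lambda^W u,u\rangle$ term by pairing it against $f$ too, writing $(\im z)\|u\|_{L^2}^2 \lesssim \|u\|_{L^2}\|f\|_{L^2}$ from selfadjointness and hence $|\im z|\,|\langle\lambda^W u,u\rangle|\lesssim \|u\|_{L^2}\|f\|_{L^2}$). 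Either way, after using Lemma \ref{lemma commutator estimate}(3),
\begin{align*}
\|\langle x\rangle^{-\frac{1+\mu}{2}}E_{1/2}u\|_{L^2}^2 \lesssim |\langle \lambda^W u, f\rangle| + \|u\|_{L^2}\|f\|_{L^2}+\|u\|_{L^2}^2.
\end{align*}
The mapping properties of $\lambda^W$ — here I would use that $\lambda\in S_1(1,\langle x\rangle,g_0)$ so that $\lambda^W$ is bounded $L^2\to L^2$ (Corollary \ref{corollary boundedness on L2}) and also $\langle x\rangle^{-\frac{1+\mu}{2}}E_{1/2}\lambda^W \langle x\rangle^{\frac{1+\mu}{2}}E_{-1/2}$ is bounded on $L^2$ by the composition calculus (Theorem \ref{theorem composition}) plus the weight $\langle x\rangle$ gained in the symbol class — give $|\langle\lambda^W u,f\rangle|\leq |\langle\lambda^W u,f_1\rangle|+|\langle\lambda^W u,f_2\rangle|\lesssim \|u\|_Y(\|f_1\|_{L^2}+\|\langle x\rangle^{\frac{1+\mu}{2}}E_{-1/2}f_2\|_{L^2})$, for any admissible splitting $f=f_1+f_2$, hence $|\langle\lambda^W u,f\rangle|\lesssim \|u\|_Y\|f\|_{Y'}$.

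Putting the two pieces together gives $\|u\|_Y^2\lesssim \|u\|_Y\|f\|_{Y'}+\|u\|_{L^2}\|f\|_{L^2}+\|u\|_{L^2}^2$, and since $\|u\|_{L^2}\lesssim a^{-1}\|f\|_{L^2}\lesssim a^{-1}\|f\|_{Y'}$ (note $\|f\|_{L^2}\le\|f\|_{Y'}$ is false in general, so here I would instead argue: either estimate $\|u\|_{L^2}$ directly by $\|f\|_{Y'}$ via the duality computation sketched above, or keep $\|u\|_{L^2}$ on the left since it is part of $\|u\|_Y$ and absorb), we obtain $\|u\|_Y^2\lesssim\|u\|_Y\|f\|_{Y'}$, hence $\|u\|_Y\lesssim\|f\|_{Y'}=\|(P_0-z)u\|_{Y'}$, which is \eqref{eq. smoothing estimate thm}. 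The main obstacle I anticipate is the careful bookkeeping of the $z$-dependent terms and the lower-order pseudodifferential errors from Lemma \ref{lemma commutator estimate}(2): one must ensure that all error operators lie in $\Op^W(S(1,g_0))$ — hence are bounded on $L^2$ and contribute only $\|u\|_{L^2}^2$-type terms — and that the weights in the $Y$ and $Y'$ norms are exactly matched to the gain $\langle x\rangle^{-1-\mu}e_{1/2}^2$ in part (1), so that the duality pairing $\langle\lambda^W u,f_2\rangle$ closes with the correct powers of $\langle x\rangle$ and $E_{\pm 1/2}$; tracking the constant's dependence on $a$ (it enters only through the $L^2$ bound, giving $C_0=\mathcal{O}(a^{-1})$) is then routine.
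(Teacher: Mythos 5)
Your proposal is correct and follows essentially the same route as the paper: the positive commutator estimate of Lemma \ref{lemma commutator estimate}, the trivial selfadjoint bound $|\im z|\,\|u\|_{L^2}\le\|(P_0-z)u\|_{L^2}$, the $Y$--$Y'$ duality pairing, and the $\mathcal{B}(Y)$-boundedness of $\lambda^W$ via the composition calculus. The only (harmless) organizational difference is that the paper splits the estimate into four separate inequalities and obtains the $L^2$-part of the $Y$-norm against $Y'$-data by a duality argument with the resolvent $(P_0-\overline{z})^{-1}\in\mathcal{B}(L^2,Y)$, whereas you absorb everything into the single inequality $\|u\|_Y^2\lesssim\|u\|_Y\|(P_0-z)u\|_{Y'}$ using $|\im z|\,\|u\|_{L^2}^2\le|\langle (P_0-z)u,u\rangle|$.
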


\begin{proof} 
Inequality \eqref{eq. smoothing estimate thm} follows from the following four inequalities: For all $u\in \mathcal{D}(\R^n)$, we have
\begin{align}
\|u\|_{L^2}&\leq |\im z|^{-1}\|(P_0-z)u\|_{L^2},\label{eq. smoothing 1}\\
\|u\|_{L^2}&\lesssim |\im z|^{-1/2}\|\langle x\rangle^{\frac{1+\mu}{2}}E_{-1/2} (P_0-z)u\|_{L^2},\label{eq. smoothing 2}\\
\|\langle x\rangle^{-\frac{1+\mu}{2}}E_{1/2}u\|_{L^2}&\lesssim |\im z|^{-1/2}\|(P_0-z)u\|_{L^2},\label{eq. smoothing 3}\\
\|\langle x\rangle^{-\frac{1+\mu}{2}}E_{1/2}u\|_{L^2}&\lesssim\|\langle x\rangle^{\frac{1+\mu}{2}}E_{-1/2}(P_0-z)u\|_{L^2}.\label{eq. smoothing 4}
\end{align}
Again, \eqref{eq. smoothing 1} immediately follows from \eqref{trivial L2L2 estimate with imaginary part}.
Inequality \eqref{eq. smoothing 2} follows from \eqref{eq. smoothing 3} by a duality argument that we shall postpone to the end of the proof. It remains to prove \eqref{eq. smoothing 3} and \eqref{eq. smoothing 4}.
%
To this end we use \eqref{eq. positive commutator}. Note that we may replace $P_0$ by $P_0-z$ in \eqref{eq. positive commutator} since the commutator with $z\in\C$ is zero. 
Since $\lambda\in S(1,g_0)$, the $L^2$-boundedness of such symbols (Corollary \ref{corollary boundedness on L2}) and the Cauchy-Schwarz inequality yield the estimate
\begin{align*}
\|u\|_{Y}^2&\lesssim\langle -\I[P_0-z,\lambda^W]u,u\rangle+\|u\|_{L^2}^2\\ &\leq
2\|\lambda^Wu\|_{L^{2}}\left(\|(P_0-z) u\|_{L^{2}}+|\im z|\|u\|_{L^2}\right)+\|u\|_{L^2}^2\\
&\leq (4C_{\lambda}|\im z|^{-1}+|\im z|^{-2})\|(P_0-z) u\|_{L^{2}}^2\\
&\lesssim |\im z|^{-1}\|(P_0-z) u\|_{L^{2}}^2
\end{align*}
where $C_{\lambda}:=\|\lambda^W\|_{\mathcal{B}(L^2)}$. Note that $\lambda^W$ is self-adjoint by Proposition \ref{Proposition selfadjointness of Weyl quantization}. In the last inequality, we also used~\eqref{trivial L2L2 estimate with imaginary part} again. This proves~\eqref{eq. smoothing 3}.

Similarly, \eqref{eq. positive commutator} and duality of $Y,Y'$ yield 
\begin{align*}
\|u\|_{Y}^2&\lesssim\langle -\I[P_0-z,\lambda^W]u,u\rangle+\|u\|_{L^2}^2\\ 
&\leq 2\|\lambda^Wu\|_{Y}\|(P_0-z) u\|_{Y'}+2|\im z|\|\lambda^Wu\|_{L^2}\|u\|_{L^2}+\|u\|_{L^2}^2\\
&\leq \epsilon \|\lambda^Wu\|_{Y}^2+\epsilon^{-1}\|(P_0-z) u\|_{Y'}^2+(1+2C_{\lambda}|\im z|)\|u\|_{L^2}^2\\
&\leq \epsilon (C_{\lambda}')^2 \|u\|_{Y}^2+\epsilon^{-1}\|(P_0-z) u\|_{Y'}^2+(1+2C_{\lambda}|\im z|)\|u\|_{L^2}^2
\end{align*}
for any $\epsilon>0$. In addition to the $L^2$-boundedness of $\lambda^W$, we used that the commutator $[\langle x\rangle^{-\frac{1+\mu}{2}}E_{1/2}, \lambda^W]$ is $L^2$-bounded (Corollary \ref{corollary boundedness on L2}), being in $\Op^W(S(1,g_1))$ by Corollary \ref{Corollary commutator} and Proposition \ref{Proposition membership to symbol classes}. This implies that $C_{\lambda}':=\|\lambda^W\|_{\mathcal{B}(Y)}<\infty$. Hiding the term with $\epsilon$ in the above inequality on the left, we get
\begin{align}\label{eq. double smoothing}
\|u\|_{Y}^2\lesssim \|(P_0-z) u\|_{Y'}^2+|\im z|\|u\|_{L^2}^2.
\end{align}
Combining \eqref{eq. smoothing 2} and \eqref{eq. double smoothing}, we get \eqref{eq. smoothing 4}.

We now provide the details of the duality argument leading to \eqref{eq. smoothing 2}. From \eqref{eq. smoothing 1}, \eqref{eq. smoothing 3}, we see that
\begin{align*}
\|(P_0-z)^{-1}f\|_{Y}\lesssim|\im z|^{-1/2}\|f\|_{L^2},\quad \mbox{for all  } f\in (P_0-z)\mathcal{D}(\R^n)\subset L^2(\R^n).
\end{align*} 
Since $(P_0-z)^{-1}$ is $L^2$-bounded and $\mathcal{D}(\R^n)$ is a core\footnote{This is equivalent to the essential selfadjointness of $P_0$ on $\mathcal{D}(\R^n)$.} for $P_0$, the set $(P_0-z)\mathcal{D}(\R^n)$ is dense in $L^2(\R^n)$ \cite[Problem III.5.19]{Kato1966}. Therefore, 
\begin{align*}
(P_0-z)^{-1}\in \mathcal{B}(L^2,Y) \mbox{  with  } \|(P_0-z)^{-1}\|_{ \mathcal{B}(L^2,Y)}\lesssim|\im z|^{-1/2}.
\end{align*}
Let $f\in L^2(\R^n)$, $g\in \mathcal{D}(\R^n)$. Then
\begin{align*}
|\langle f,g\rangle|\leq \|(P_0-\overline{z})^{-1}f\|_{Y}\|(P_0-z)g\|_{Y'}\lesssim |\im z|^{-1/2}\|f\|_{L^2}\|(P_0-z)g\|_{Y'}
\end{align*}
Taking the supremum over all $f\in L^2(\R^n)$ with $\|f\|_{L^2}=1$, we arrive at
\begin{align*}
\|g\|_{L^2}\lesssim|\im z|^{-1/2}\|(P_0-z)g\|_{Y'},
\end{align*}
proving \eqref{eq. smoothing 2}.
\end{proof}


\section{Proof of Theorem \ref{thm resolvent estimate X'X}}\label{section Proof main theorem, general case}
We first prove Theorem \ref{thm resolvent estimate X'X} for $P_0$ and for $\widetilde{P}_0$.

\begin{proof}[Proof of Theorem \ref{thm resolvent estimate X'X} for $P_0$]
Since $X=Y\cap L^q$ and $X'=Y'+L^{q'}$, inequality \eqref{eq. resolvent estimate main theorem} is equivalent to the following four inequalities: For all $u\in \mathcal{D}(\R^n)$, we have
\begin{align}
\|u\|_{Y}&\lesssim\|(P_0-z)u\|_{Y'},\label{9 inequalities 1}\\
\|u\|_{Y}&\lesssim\|(P_0-z)u\|_{L^{q'}},\label{9 inequalities 2}\\
\|u\|_{L^q}&\lesssim\|(P_0-z)u\|_{Y'},\label{9 inequalities 3}\\
\|u\|_{L^q}&\lesssim\|(P_0-z)u\|_{L^{q'}}\label{9 inequalities 4}.
\end{align}
We have already proved \eqref{9 inequalities 1} and \eqref{9 inequalities 4} in Theorem \ref{thm smoothing} and Theorem \ref{thm LpLp'}, respectively. Moreover, \eqref{9 inequalities 3} follows from \eqref{9 inequalities 2} by a duality argument. Since it is a bit more involved than the previous one, we relegate its proof to Appendix \ref{appendix proof duality argument}. It remains to prove~\eqref{9 inequalities 2}. Here we use that $\lambda^W\in\mathcal{B}(L^q)$ since $1<q<\infty$, see Corollary \ref{Corollary Lp boundedness}. Denoting $C_{\lambda,q}:=\|\lambda^W\|_{\mathcal{B}(L^q)}$, it then follows from \eqref{eq. positive commutator} that
\begin{align*}
\|u\|_{Y}^2&\lesssim\langle -\I[P_0-z,\lambda^W]u,u\rangle+\|u\|_{L^2}^2\\ 
&\leq 2\|\lambda^Wu\|_{L^q}\|(P_0-z)u\|_{L^{q'}}+2|\im z|\|\lambda^Wu\|_{L^2}\|u\|_{L^2}+\|u\|_{L^2}^2\\
&\leq \|\lambda^Wu\|_{L^q}^2+\|(P_0-z)u\|_{L^{q'}}^2+(1+2C_{\lambda}|\im z|)\|u\|_{L^2}^2\\
&\leq C_{\lambda,q}^2\|u\|_{L^q}^2+\|(P_0-z)u\|_{L^{q'}}^2+(1+2C_{\lambda}|\im z|)\|u\|_{L^2}^2\\
&\lesssim\|(P_0-z)u\|_{L^{q'}}^2
\end{align*}
where we used \eqref{eq. Lq'Lq in thm} in the last step.
\end{proof}

\begin{proof}[Proof of Theorem \ref{thm resolvent estimate X'X} for $\widetilde{P}_0$]
An inspection of the previous proofs shows that we may add the perturbation $W\in L^{\infty}$ to the operator $P_0$, without any smallness assumption on the norm. This is because our estimates control $L^2$-norms. 

For the reader's convenience, we provide the argument for the first part of the proof of Theorem \ref{thm LpLp'}, i.e.\
for the spectral projection estimate \eqref{O(1) bound spectral projections}.
To this end, we observe that the analogue of the energy estimate \eqref{eq. energy estimate},
\begin{align*}
\|(\I\partial_t-\widetilde{P}_0)\e^{-\I t k} \widetilde{\Pi}_{[k,k+1]}u\|_{L^{\infty}_t([0,T];L^2_x)}\leq \|u\|_{L^2_x},
\end{align*}
holds for the spectral projections $\widetilde{\Pi}_{[k,k+1]}$ of $\widetilde{P}_0$, by selfadjointness.
We write 
\begin{align*}
(\I\partial_t-P_0)\e^{-\I t k} \widetilde{\Pi}_{[k,k+1]}u=(\I\partial_t-\widetilde{P}_0)\e^{-\I t k} \widetilde{\Pi}_{[k,k+1]}u
+\e^{-\I t k} W\widetilde{\Pi}_{[k,k+1]}u
\end{align*}
and apply the Strichartz estimates \eqref{Strichartz estimates} with $(p_1,q_1)=(p,q)$ and $(p_2,q_2)=(\infty,2)$.
This yields
\begin{align*}
\| \widetilde{\Pi}_{[k,k+1]}u\|_{L^q}\lesssim (1+\|W\|_{L^{\infty}})\|u\|_{L^2}.
\end{align*}
Similarly, one can show that all the previous inequalities for $P_0$ continue to hold for $\widetilde{P}_0$ with the same modification of the constant.
\end{proof}

To treat the general case, we write $P=\widetilde{P}_0+L$, where
\begin{align}\label{L}
 L=-2\I A_1\cdot \nabla-\I(\nabla\cdot A_1)+2 A_0\cdot A_1+A_1^2+V_1.
\end{align}

\begin{lemma}\label{L bounded}
Under the assumptions (A3)--(A5), we have $ L\in\mathcal{B}(X,X')$, with
$\|L\|_{\mathcal{B}(X,X')}\leq C_L\epsilon$ for all $\epsilon\in [0,1]$. The constant $C_L$ is independent of $\epsilon$ and depends only on $n$, $\mu$, $\delta$, $\delta'$ and on $\|\langle x\rangle^{-1}A_0\|_{L^{\infty}}$.
\end{lemma}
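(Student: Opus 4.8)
The plan is to decompose $L$ into its four constituent pieces from \eqref{L} and estimate each as a map $X\to X'$, exploiting that $X\subset L^2\cap L^q\cap (\text{weighted }E_{1/2}L^2)$ and $X'\supset L^2+L^{q'}+(\text{weighted }E_{-1/2}L^2)$, and that all the coefficients carry the small factor $\epsilon\langle x\rangle^{-1-\delta}$ (or $\epsilon\langle x\rangle^{-1-\delta'}$ after a regularity argument in case (A4b)). The zeroth-order terms $2A_0\cdot A_1+A_1^2+V_1$ are the easy ones: since $|A_0(x)|\lesssim\langle x\rangle$ by (A1) (integrating $|\partial A_0|\lesssim 1$), we have $|2A_0\cdot A_1+A_1^2|\lesssim\epsilon\langle x\rangle^{-\delta}\lesssim\epsilon$, so this part maps $L^2\to L^2\subset X'$ (indeed it even gains decay); and $V_1\in L^r$ with $\|V_1\|_{L^r}\lesssim\epsilon$ maps $L^q\to L^{q'}$ by H\"older, because $1/q'=1/q+1/r$ exactly when $q=2r'$. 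The divergence term $-\I(\nabla\cdot A_1)$ is, under (A4a), an $L^\infty$ function bounded by $\epsilon\langle x\rangle^{-1-\delta}$, hence maps $L^2\to L^2$; under (A4b) it is not a function, and one instead pairs it against test functions and uses the $\dot W^{1/2,2n}$ bound (see below).

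The genuinely delicate term is the first-order part $-2\I A_1\cdot\nabla$. The strategy is to write $A_1\cdot\nabla u = A_1\cdot\nabla u$ and move half a derivative onto $A_1$: more precisely, one wants a bound of the form
\begin{align*}
\|\langle x\rangle^{\frac{1+\mu}{2}}E_{-1/2}(A_1\cdot\nabla u)\|_{L^2}\lesssim\epsilon\|\langle x\rangle^{-\frac{1+\mu}{2}}E_{1/2}u\|_{L^2}+\epsilon\|u\|_{L^2}.
\end{align*}
Since $|A_1|\lesssim\epsilon\langle x\rangle^{-1-\delta}$ and $\mu\le\delta$, the weight $\langle x\rangle^{\frac{1+\mu}{2}}\cdot\langle x\rangle^{-1-\delta}\cdot\langle x\rangle^{\frac{1+\mu}{2}}=\langle x\rangle^{\mu-\delta}$ is bounded; so heuristically one is left with $\|E_{-1/2}(A_1\cdot\nabla u)\|_{L^2}\lesssim\|E_{1/2}u\|_{L^2}+\|u\|_{L^2}$ modulo commuting $A_1$ through $E_{\pm1/2}$ and the weights. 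Under (A4a) the Lipschitz bound $|\nabla A_1|\lesssim\epsilon\langle x\rangle^{-1-\delta}$ lets one commute derivatives past $A_1$ with controlled error, reducing everything to the mapping property $\langle D\rangle^{-1/2}\nabla: H^{1/2}\to L^2$ together with $L^2$-boundedness of the pseudodifferential operators $E_{\pm1/2}\langle D\rangle^{\mp1/2}$ (both in suitable symbol classes by the calculus in the Appendix). Under (A4b) one instead invokes the Kato–Ponce / fractional Leibniz estimate: multiplication by $\langle x\rangle^{1+\delta'}A_1\in\dot W^{1/2,2n}$ maps $\dot H^{1/2}\to L^2$ in $\R^n$ by Sobolev embedding $\dot W^{1/2,2n}\cdot\dot H^{1/2}\hookrightarrow L^2$ (exactly the computation in \cite{ErdoganEtAl2009}), and this is exactly what is needed to absorb the half-derivative; the weight $\langle x\rangle^{-1-\delta'}$ then pairs against the two $\langle x\rangle^{\frac{1+\mu}{2}}$ weights as before since $\mu\le\delta<\delta$ forces $\mu-\delta'$ to be... one needs $\mu\le\delta'$ here, which is why the statement carries $\delta'$ among the parameters; if $\mu>\delta'$ one first lowers $\mu$, or notes that in case (A4b) the relevant constraint is $\mu\le\delta'$.

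I would organize the write-up as: (i) record $|A_0|\lesssim\langle x\rangle$; (ii) dispatch the three zeroth-order terms by H\"older and the embeddings $L^2\subset Y'\subset X'$, $L^{q'}\subset X'$; (iii) handle $\nabla\cdot A_1$ (trivial under (A4a), via duality pairing and $\dot H^{1/2}$-duality under (A4b)); (iv) do the first-order term $A_1\cdot\nabla$ in the two cases, this being the bulk of the work; (v) collect constants and check the $\epsilon$-dependence is linear and that the constant depends only on the listed quantities. The main obstacle is step (iv): making the formal "half-derivative transfer" rigorous in the pseudodifferential framework of metric $g_0$ (resp. $g_1$) — i.e. verifying that $E_{-1/2}\langle x\rangle^{\frac{1+\mu}{2}}$, after commutation, composes acceptably with multiplication by a $\dot W^{1/2,2n}$ or Lipschitz function and with $\langle D\rangle^{1/2}$ — and in particular checking that the low-regularity of $A_1$ in case (A4b) is compatible with the weights. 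Everything else is routine bookkeeping with H\"older's inequality and the symbol calculus already set up in the Appendix.
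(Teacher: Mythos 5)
Your proposal follows essentially the same route as the paper: the same term-by-term decomposition of $L$, the zeroth-order and $V_1$ pieces dispatched by H\"older with $q=2r'$ and $|A_0(x)|\lesssim\langle x\rangle$, and the first-order piece handled by transferring half a derivative onto $\widetilde A_1=\langle x\rangle^{1+\mu}A_1$ via the commutator estimates of Propositions~\ref{commutator bound A1 Taylor} and~\ref{commutator bound A1 Schlag et al}, after replacing $E_{\pm 1/2}$ by $\langle D\rangle^{\pm 1/2}$ using the $L^2$-boundedness results of the symbol calculus. The commutation bookkeeping you leave schematic is exactly what the paper carries out by splitting the relevant operator as $B_0+B_1+B_2+B_3$ and reducing everything to $B_0=\langle D\rangle^{-1/2}\widetilde A_1\cdot\nabla\langle D\rangle^{-1/2}$.
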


For the proof of Lemma \ref{L bounded} the following propositions will be used.

\begin{proposition}\label{commutator bound A1 Taylor}
Let $s\leq 1$ and $f\in \Lip(\R^n)$.
Then $[f,\langle D\rangle^{s}]\in\mathcal{B}(L^2)$.
\end{proposition}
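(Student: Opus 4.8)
The plan is to reduce the statement to the boundedness of $[f,\langle D\rangle^s]$ on $L^2$ via a kernel estimate, exploiting that differentiating the Fourier multiplier symbol $\langle\xi\rangle^s$ once produces a symbol of order $s-1\le 0$, so the commutator should gain one derivative relative to the operator $f\langle D\rangle^s$ and thus be bounded of order $s-1\le 0$. Concretely, the operator $[f,\langle D\rangle^s]$ has Schwartz kernel
\begin{align*}
K(x,y)=(f(x)-f(y))\,G_s(x-y),
\end{align*}
where $G_s$ is the inverse Fourier transform of $\langle\xi\rangle^s$. The Lipschitz bound gives $|f(x)-f(y)|\le \|f\|_{\Lip}|x-y|$, so one is led to estimate the operator with kernel $|x-y|\,|G_s(x-y)|$, which is convolution with $|x|\,|G_s(x)|$.

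The key steps, in order, are as follows. First I would record the decay and singularity properties of $G_s$: away from the origin $G_s$ decays rapidly (since $\langle\xi\rangle^s$ is smooth, in fact a symbol, the kernel is Schwartz outside any neighbourhood of $0$), and near the origin $|G_s(x)|\lesssim |x|^{-(n+s)}$ for $s>0$ (more precisely $|x|^{-n-s}$ type behaviour, with a logarithmic case when $n+s\in 2\N$ that is harmless, and for $s\le 0$ the kernel is even less singular). Hence $|x|\,|G_s(x)|\lesssim |x|^{-(n+s-1)}$ near $0$, which is locally integrable precisely because $s\le 1$, and rapidly decaying at infinity; in particular $x\mapsto |x|\,|G_s(x)|\in L^1(\R^n)$. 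Then by Young's inequality (or the Schur test with the constant majorant $\||x|G_s(x)\|_{L^1}$), the operator with kernel $|x-y|\,|G_s(x-y)|$ is bounded on $L^2(\R^n)$, and therefore $[f,\langle D\rangle^s]\in\mathcal{B}(L^2)$ with norm controlled by $\|f\|_{\Lip}\,\||x|G_s\|_{L^1}$. A clean way to organize the kernel bounds on $G_s$ is to split $\langle\xi\rangle^s$ dyadically, or simply to invoke that $\langle\xi\rangle^s\in S^s$ and use the standard pseudodifferential kernel estimates; since only very crude information is needed, an elementary stationary-phase-free argument (writing $\langle\xi\rangle^s=(\langle\xi\rangle^s-|\xi|^s\chi(\xi))+\dots$ to separate the homogeneous singularity from the smooth large-$\xi$ part) suffices.

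The main obstacle, such as it is, is purely a bookkeeping one: getting the near-origin singularity of $G_s$ right and confirming that multiplication by $|x|$ brings it into $L^1_{\mathrm{loc}}$ exactly under the hypothesis $s\le 1$. This is where the endpoint $s=1$ is sharp — one gets $|x|^{-n}$, which is on the boundary of $L^1_{\mathrm{loc}}$, so one should be slightly careful and note that for $s=1$ the relevant homogeneous kernel is actually $|x|^{-n-1}$ times $|x|$, i.e. genuinely $|x|^{-n}$ only up to the logarithmic borderline, and in any case the difference $\langle\xi\rangle - |\xi|$ is bounded, which improves the singularity; so the $s=1$ case reduces to the (known) boundedness of $[f,|D|]$ together with a trivially bounded remainder. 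All of the Fourier-analytic facts used (kernel of a Bessel-type potential, Young's inequality, Schur test) are classical and require no input beyond what is available, so no result from later in the paper is needed.
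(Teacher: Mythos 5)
The paper does not prove this proposition at all: it simply cites Taylor \cite[Proposition~4.1.A]{MR1121019}, whose argument goes through symbol smoothing and paraproducts. Your kernel-based route is therefore genuinely different, and it is essentially correct in the range $s<1$: the commutator kernel $(f(x)-f(y))G_s(x-y)$ is dominated by $\|f\|_{\Lip}\,|x-y|\,|G_s(x-y)|$, with $|G_s(x)|\lesssim|x|^{-n-s}$ near the origin and rapid decay away from it, so $|x|\,|G_s(x)|\in L^1$ and Young/Schur applies. (One point you should make explicit: for $s>0$ the symbol $\langle\xi\rangle^s$ is not integrable, so $G_s$ is only a distribution with a non-locally-integrable singularity $|x|^{-n-s}$; the reason the absolutely convergent kernel representation is legitimate for the \emph{commutator} is precisely that $f(x)-f(y)$ vanishes to first order on the diagonal and absorbs the finite-part regularization, which for $-n-s\in(-n-1,-n)$ involves only $\delta_0$ and no derivatives. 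This deserves a sentence, or else the dyadic decomposition you mention, which avoids the issue entirely.) It is worth noting that the paper only ever applies the proposition with $s=1/2$ (the commutator $[\langle D\rangle^{1/2},\widetilde A_1]$ in the proof of Lemma~\ref{L bounded}), so your elementary argument fully covers the case that is actually used, which is a real gain in self-containedness over the citation.

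The weak spot is the endpoint $s=1$, and your treatment of it is not yet a proof. Appealing to the boundedness of $[f,|D|]$ means invoking Calder\'on's commutator theorem, a result of essentially the same depth as the proposition itself; that is acceptable as a ``known'' black box, but you should say so rather than present it as a reduction to something easy. More substantively, the claim that the remainder $[f,\langle D\rangle-|D|]$ is ``trivially bounded'' is only trivial when $f\in L^\infty$ (then each of $f\,m(D)$ and $m(D)f$ is bounded). For a merely Lipschitz, possibly unbounded $f$ you must again use the commutator structure, and there the crude estimate fails: the kernel of $m(D)$ with $m(\xi)=\langle\xi\rangle-|\xi|=(\langle\xi\rangle+|\xi|)^{-1}$ decays only like $|x|^{-n-1}$ at infinity (because $m$ is not $C^1$ at $\xi=0$), so $|x|\,|\check m(x)|\sim|x|^{-n}$ is logarithmically divergent at infinity and the Schur test does not close. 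In the paper's application $f=\widetilde A_1$ is bounded by assumption (A4), so this does not affect the paper, but as a proof of the proposition as stated the endpoint case needs either the boundedness hypothesis made explicit (as in Taylor's definition of $\Lip^1$) or a finer low-frequency decomposition.
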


\begin{proof}
\cite[Proposition 4.1.A]{MR1121019}. 
\end{proof}

\begin{proposition}\label{commutator bound A1 Schlag et al}
Let $s\leq 1/2$ and $f\in \dot{W}^{\frac{1}{2},2n}(\R^n)$.
Then $[f,\langle D\rangle^{s}]\in\mathcal{B}(L^2)$.
\end{proposition}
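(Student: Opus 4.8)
The statement to prove is Proposition~\ref{commutator bound A1 Schlag et al}: for $s\le 1/2$ and $f\in\dot W^{1/2,2n}(\R^n)$, the commutator $[f,\langle D\rangle^s]$ is bounded on $L^2$.

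The plan is to reduce to the critical case $s=1/2$ and then exploit the structure of the commutator as a (co)paraproduct-type operator. First I would observe that it suffices to treat $s=1/2$: for $s<1/2$ one can write $\langle D\rangle^s=\langle D\rangle^{1/2}\cdot\langle D\rangle^{s-1/2}$ and, since $\langle D\rangle^{s-1/2}$ is an order $\le 0$ Fourier multiplier (hence $L^2$-bounded), expand $[f,\langle D\rangle^s]=[f,\langle D\rangle^{1/2}]\langle D\rangle^{s-1/2}+\langle D\rangle^{1/2}[f,\langle D\rangle^{s-1/2}]$ --- but the second term is not obviously better, so a cleaner route is: for $s\le 0$ the claim is the statement that $f$ times an $L^2$-bounded operator minus that operator times $f$ is bounded, which is false in general, so one genuinely needs $0<s\le 1/2$ and the natural move is to prove it for $s=1/2$ and then interpolate/compose carefully, or simply note that the same paraproduct argument below works verbatim for any $0<s\le 1/2$ because the only place the exponent enters is a summable geometric factor $2^{js}\lesssim 2^{j/2}$. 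So the real content is $s=1/2$.

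The key steps for $s=1/2$: introduce a Littlewood--Paley decomposition $1=\sum_{j\ge 0}\varphi_j(D)$ (with $\varphi_0$ including low frequencies) and split the commutator by decomposing $f=\sum_k \varphi_k(D)f$ and $u=\sum_j\varphi_j(D)u$. Write $[f,\langle D\rangle^{1/2}]u=\sum_{j,k}[\varphi_k(D)f,\langle D\rangle^{1/2}]\varphi_j(D)u$ and group into the three paraproduct regimes $k\ll j$, $k\sim j$, $k\gg j$. In the regime $k\gg j$ (high-high in $f$, low in $u$) and $k\sim j$, one uses that $\|\varphi_k(D)f\|_{L^\infty}\lesssim 2^{kn/(2n)}\|\varphi_k(D)f\|_{L^{2n}}=2^{k/2}\|\varphi_k(D)f\|_{L^{2n}}$ by Bernstein, crucially matching the $2^{k/2}$ loss from $\langle D\rangle^{1/2}$; Cauchy--Schwarz in the frequency parameters together with $\sum_k 2^k\|\varphi_k(D)f\|_{L^{2n}}^2\lesssim\|f\|_{\dot W^{1/2,2n}}^2$ (the square-function characterization of the homogeneous Sobolev/Besov-type norm, using $2n<\infty$) closes these pieces. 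In the main paraproduct regime $k\ll j$, the commutator gains a derivative: a Taylor/mean-value expansion of the symbol $\langle\xi\rangle^{1/2}$ around $\xi$ gives $[\varphi_k(D)f,\langle D\rangle^{1/2}]\varphi_j(D)u\approx (\nabla\varphi_k(D)f)\cdot(\nabla_\xi\langle D\rangle^{1/2})\varphi_j(D)u$ plus lower order, so effectively one trades $2^{j/2}$ for $2^{k}\cdot 2^{-j/2}$ against the symbol bound, and then $\|\nabla\varphi_k(D)f\|_{L^\infty}\lesssim 2^{k}\cdot2^{-k/2}\|\varphi_k(D)\langle D\rangle^{1/2}f\|_{L^{2n}}$-type estimates again produce the summable $\ell^2$ structure; the off-diagonal decay in $|j-k|$ makes the double sum converge.

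The main obstacle I expect is handling the endpoint integrability exponent $2n$ correctly: one must use exactly that $L^{2n}$ is the Sobolev-critical space so that Bernstein's inequality converts $\dot W^{1/2,2n}$ control of $f$ into an $\ell^2_k$-summable family of $L^\infty$ bounds on the Littlewood--Paley pieces of $f$ with precisely the weight $2^{k/2}$ needed to absorb the derivative loss of $\langle D\rangle^{1/2}$ --- any mismatch here (e.g. using $L^p$ for $p\ne 2n$) breaks the argument. A secondary technical point is justifying the symbol expansion/kernel estimates for $[\varphi_k(D)f,\langle D\rangle^{1/2}]$ with uniform constants; this is standard pseudodifferential/paraproduct bookkeeping (one may cite the relevant paraproduct machinery, e.g. in the spirit of \cite{ErdoganEtAl2009} where (A4b) originates, or a Coifman--Meyer commutator estimate) rather than anything genuinely new. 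Given that the paper states this as a cited-style proposition, I would in fact expect the written proof to be short, either invoking the corresponding result from \cite{ErdoganEtAl2009} directly or reducing to the $s=1/2$ case and quoting a known commutator lemma.
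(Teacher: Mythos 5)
Your closing guess is exactly right: the paper's entire ``proof'' of this proposition is the citation \cite[Lemma 2.2]{ErdoganEtAl2009}, so your proposal takes the same route as the paper (and your paraproduct sketch is a reasonable outline of how that cited lemma is actually proved, with the Bernstein step at the critical exponent $2n$ being the key point, as you identify). One small caveat: your aside that the claim is ``false in general'' for $s\le 0$ contradicts the proposition as stated, which allows all $s\le 1/2$; for $-1/2\le s\le 0$ one can reduce to the positive-exponent case via $[f,T^{-1}]=-T^{-1}[f,T]T^{-1}$ with $T=\langle D\rangle^{-s}$, and in any event only $s=\pm 1/2$ is used in the paper.
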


\begin{proof}
\cite[Lemma 2.2]{ErdoganEtAl2009}.
\end{proof}

\begin{proof}[Proof of Lemma \ref{L bounded}]
Without loss of generality we may assume that $\epsilon=1$; the dependence of the bound on $\epsilon$ follows by scaling.
We start with the estimate
\begin{align*}
\|L u\|_{X'}&\leq 2\|\langle x\rangle^{\frac{1+\mu}{2}}E_{-1/2} A_1\cdot\nabla u\|_{L^2}+\|(\nabla\cdot A_1)u\|_{L^2}
\\
&+2\|A_0\cdot A_1 u\|_{L^2}+\|A_1^2 u\|_{L^2}+\|V_1 u\|_{L^{q'}}.
\end{align*}
We immediately see from H\"older's inequality that
\begin{align*}
\|(\nabla\cdot A_1)u\|_{L^2}+2\|A_0\cdot A_1 u\|_{L^2}+\|A_1^2 u\|_{L^2}+\|V_1 u\|_{L^{q'}}\\
\leq
(3+2\|\langle x\rangle^{-1}A_0\|_{L^{\infty}})\epsilon\|u\|_{X}.
\end{align*}
It remains to prove 
\begin{align}\label{only hard part in perturbation argument}
\|\langle x\rangle^{\frac{1+\mu}{2}}E_{-1/2} A_1\cdot\nabla u\|_{L^2}\lesssim\|u\|_{X}.
\end{align}
We set $\widetilde{A_1}(x):=\langle x\rangle^{1+\mu}A_1(x)$. Then, \eqref{only hard part in perturbation argument} would follow from 
\begin{align*}
\langle x\rangle^{\frac{1+\mu}{2}}E_{-1/2}\langle x\rangle^{-(1+\mu)} \widetilde{A_1}\cdot\nabla E_{-1/2}\langle x\rangle^{\frac{1+\mu}{2}}\in\mathcal{B}(L^2).
\end{align*}
Writing 
\begin{align*}
\langle x\rangle^{\frac{1+\mu}{2}}E_{-1/2}&=\left(\langle x\rangle^{\frac{1+\mu}{2}}E_{-1/2}\langle D\rangle^{1/2}\langle x\rangle^{-\frac{1+\mu}{2}}\right)\langle x\rangle^{\frac{1+\mu}{2}}\langle D\rangle^{-1/2},\\
E_{-1/2}\langle x\rangle^{\frac{1+\mu}{2}}&=\langle D\rangle^{-1/2}\langle x\rangle^{\frac{1+\mu}{2}}\left(\langle x\rangle^{-\frac{1+\mu}{2}}\langle D\rangle^{1/2}E_{-1/2}\langle x\rangle^{\frac{1+\mu}{2}}\right),
\end{align*}
and using the $L^2$-boundedness of the operators in brackets (a consequence of Corollary \ref{corollary boundedness on L2} and Proposition \ref{Proposition membership to symbol classes}), it remains to prove that
\begin{align}\label{only hard part in perturbation argument tilde}
B:=\langle x\rangle^{\frac{1+\mu}{2}}\langle D\rangle^{-1/2}\langle x\rangle^{-(1+\mu)} \widetilde{A_1}\cdot\nabla \langle D\rangle^{-1/2}\langle x\rangle^{\frac{1+\mu}{2}}\in\mathcal{B}(L^2).
\end{align}
After some commutations, we see that 
\begin{align*}
B=B_0+B_1+B_2+B_3,
\end{align*}
where 
\begin{align*}
B_0&:=\langle D\rangle^{-1/2}\widetilde{A}_1\cdot\nabla\langle D\rangle^{-1/2},\\
B_1&:=\langle x\rangle^{-\frac{1+\mu}{2}}[\langle D\rangle^{-1/2},\langle x\rangle^{\frac{1+\mu}{2}}]\widetilde{A}_1\cdot\nabla\langle D\rangle^{-1/2},\\
B_2&:=\langle x\rangle^{-\frac{1+\mu}{2}}\langle D\rangle^{-1/2}\widetilde{A}_1\cdot[\nabla\langle D\rangle^{-1/2},\langle x\rangle^{\frac{1+\mu}{2}}],\\
B_3&:=\langle x\rangle^{\frac{1+\mu}{2}}[\langle D\rangle^{-1/2},\langle x\rangle^{-(1+\mu)}]\widetilde{A}_1\cdot\nabla\langle D\rangle^{-1/2}\langle x\rangle^{\frac{1+\mu}{2}}.
\end{align*}
Since $\widetilde{A}_1\in L^{\infty}$ (recall that $0<\mu\leq \delta$), it is immediate that $B_2$ is bounded (again a consequence of Corollary \ref{corollary boundedness on L2} and Proposition \ref{Proposition membership to symbol classes}).
Using the identity
\begin{align}\label{commutator identity}
[T^{-1},S]=-T^{-1}[T,S]T^{-1},
\end{align}
with $T=\langle D\rangle^{1/2}$ and $S=\langle x\rangle^{\frac{1+\mu}{2}}$
we can write
\begin{align*}
B_1=-\langle x\rangle^{-\frac{1+\mu}{2}}\langle D\rangle^{-1/2}[\langle D\rangle^{1/2},\langle x\rangle^{\frac{1+\mu}{2}}]B_0.
\end{align*}
Hence, if $B_0$ is bounded, then so is $B_1$. A similar calculation shows that if $B_0$ is bounded, then $B_3$ is bounded. To prove the boundedness of $B_0$, we commute again, using \eqref{commutator identity}, to see that
\begin{align*}
B_0=-\langle D\rangle^{-1/2}[\langle D\rangle^{1/2},\widetilde{A}_1]\langle D\rangle^{-1/2}\cdot\nabla\langle D\rangle^{-1/2}+\widetilde{A}_1\langle D\rangle^{-1/2}\cdot\nabla\langle D\rangle^{-1/2}.
\end{align*}
Clearly, the second term is bounded. The first term is bounded by Proposition~\ref{commutator bound A1 Taylor} (if (A4a) is assumed) or Proposition~\ref{commutator bound A1 Schlag et al} (if (A4b) is assumed). 
\end{proof}

\begin{proof}[Proof of Theorem \ref{thm resolvent estimate X'X} in the general case]
By what we have already proved, Theorem~\ref{thm resolvent estimate X'X} holds for $\widetilde{P}_0$. Lemma \ref{L bounded} then yields that for $u\in \mathcal{D}(\R^n)$, we have
\begin{align*}
\|(P-z)u\|_{X'}\geq \|(P_0-z)u\|_{X'}-\|Lu\|_{X'}\geq \left(\frac{1}{C_0}-C_L\epsilon\right)\|u\|_X\geq \frac{1}{2C_0}\|u\|_X,
\end{align*}
provided $\epsilon\leq (2C_LC_0)^{-1}$; here, $C_0$, $C_L$ are the constants in Theorem \ref{thm resolvent estimate X'X} and Lemma~\ref{L bounded}, respectively.
\end{proof}

\section{Application to complex-valued potentials}\label{section complex-valued potentials}

In this Section we use the resolvent estimate to find upper bounds on the location of (complex) eigenvalues 
for Schr\"odinger operators with complex-valued potentials. For Schr\"odinger operators $-\Delta+V$ with decaying but singular potentials $V$, results of this type have been established e.g.\ in \cite{AAD01} in one dimension and in \cite{Frank11,FrankSimon2015,FanelliKrejcikVega2015} in higher dimensions. Estimates for sums of eigenvalues of the Schr\"odinger operator with constant magnetic field perturbed by complex electric potentials were obtained in \cite{Sambou2014}. The conditions on the potential there are much more restrictive than ours and the results (when applied to a single eigenvalue) are considerably weaker. To our knowledge, our eigenvalue estimates are the first with a complex-valued magnetic potential.

For definiteness, we assume that $P_0$ is either the harmonic oscillator or the Schr\"odinger operator with constant magnetic field (called the Landau Hamiltonian in quantum mechanics), but other examples could easily be accommodated. Hence, from now on, either
\begin{align}\label{harmonic oscillator}
P_0=-\Delta+|x|^2,\quad x\in\R^n \quad(\mbox{Harmonic Oscillator}),
\end{align}
or, for $n$ even and $B_0>0$,
\begin{align}\label{constant magnetic field}
P_0=
\sum_{j=1}^{n/2}\left[\left(-\I\partial_{x_j}-\frac{B_0}{2} y_j\right)^2+\left(-\I\partial_{y_j}+\frac{B_0}{2}x_j\right)^2\right]
\quad\mbox{(Landau Hamiltonian)},
\end{align}
where in the case \eqref{constant magnetic field} we denoted the independent variable by $(x,y)=z\in\R^{n}$.
In the mathematical literature, \eqref{harmonic oscillator} is also called the Hermite operator and \eqref{constant magnetic field} is known as the twisted Laplacian. The spectra of these operators can be computed explicitly to be
\begin{align*}
\sigma(P_0)=2\N+m(n),
\end{align*}
where $m(n)=n$ in the case of \eqref{harmonic oscillator} and $m(n)=n/2$ in the case of \eqref{constant magnetic field}. 

In the notation of \eqref{P_0}, the harmonic oscillator \eqref{harmonic oscillator} corresponds to $P_0$ with $V_0(x)=|x|^2$, $A_0(x)=0$, while the Landau Hamiltonian \eqref{constant magnetic field} corresponds to $P_0$ with $V_0(z)=0$ and
\begin{align*}
A_0(z)=\frac{B_0}{2}(-y_1,x_1,\ldots,-y_{n/2},x_{n/2}),\quad z=(x,y)\in\R^n.
\end{align*}
As before, we allow a real-valued bounded potential $W\in L^{\infty}(\R^n,\R)$ in the definition of the unperturbed operator $\widetilde{P}_0=P_0+W$.
We now consider a perturbation of $\widetilde{P}_0$ by a \emph{complex-valued} electromagnetic potential $(A_1,V_1)$, that is we consider the Schr\"odinger operator
\begin{align*}
P=(-\I\nabla+A)^2+V=\widetilde{P}_0+L,\quad \dom(P)=\mathcal{D}(\R^n).
\end{align*}
Here, $L$ is given by \eqref{L}. 
We only require a smallness assumption on $A_1$, but not on $V_1$. To be clear, we repeat the assumptions on $A_1,V_1$ at this point.
\begin{enumerate}
\item[($A4_{\C}$)] $A_1\in \Lip(\R^n,\C^n)$ and there exists $\delta>0$ such that
\begin{align}
|\nabla A_1(x)|\lesssim \epsilon\langle x\rangle^{-1-\delta}\quad\mbox{for almost every }x\in\R^n.
\end{align}
Moreover, assume that one of the following additional assumptions hold.
\item[($A4a_{\C}$)] $A_1\in \Lip(\R^n,\C^n)$ and
\begin{align*}
|\nabla A_1(x)|\lesssim \epsilon\langle x\rangle^{-1-\delta}\quad\mbox{for almost every }x\in\R^n.
\end{align*}
\item[($A4b_{\C}$)] There exists $\delta'\in (0,\delta)$ such that $\langle x\rangle^{1+\delta'} A_1\in \dot{W}^{\frac{1}{2},2n}(\R^n;\C^n)$, and we have $\|\langle x\rangle^{1+\delta'}A_1\|_{\dot{W}^{\frac{1}{2},2n}}\lesssim \epsilon$.
\item[($A5_{\C}$)] Assume that $V_1\in L^{r}(\R^n,\C)$ for some $r\in(1,\infty]$ if $n=2$ and $r \in[n/2,\infty]$ if $n\geq 3$.
\end{enumerate}

Let $Q(P_0)$ be the form domain\footnote{$Q(P_0)=D(\mathfrak{q}_0)$ in the notation of Appendix \ref{appendix m-sectorial}. We equip $Q(P_0)$ with the form norm \eqref{form norm}.} of $P_0$,
\begin{align}
Q(P_0)=\set{u\in L^2(\R^n)}{\nabla_{A_0} u\in L^2(\R^n),\, V_0^{1/2}u\in L^2(\R^n)}\label{form domain P0}.
\end{align}
By Lemma \ref{lemma relative form bounded} there exists a unique $m$-sectorial extension of $P$ with the property $\dom(P)\subset Q(P_0)$.  	
By abuse of notation we will still denote this extension by $P$ in the following theorem.

\begin{theorem}\label{thm complex potentials}
Assume that $P_0$ is either the harmonic oscillator \eqref{harmonic oscillator} or the Landau Hamiltonian \eqref{constant magnetic field} (when $n$ is even). Assume that $A_1,V_1$ satisfy $A4_{\C}$--$A5_{\C}$. For $a>0$ fixed, there exists $\epsilon_0>0$ such that whenever $A_1$ satisfies $A4_{\C}$ with $\epsilon<\epsilon_0$, then every eigenvalue $z$ of $P$ with $|\im z|\geq a$ satisfies the estimate
\begin{align}\label{spectral estimate complex potentials}
|\im z|^{1-\frac{n}{2r}}\leq \frac{C_0(1+\|W\|_{L^{\infty}})}{1-\epsilon/\epsilon_0}\|V_1\|_{L^r}.
\end{align}
Here, $C_0$ is the constant in the estimate \eqref{eq. resolvent estimate z dependent}, and $\epsilon_0$ depends on $n,\delta,\delta',\mu,a,B_0,C_0$.
\end{theorem}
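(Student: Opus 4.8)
The plan is to deduce Theorem~\ref{thm complex potentials} from the resolvent estimate for $\widetilde{P}_0$ (the case $V_1=0$ of Theorem~\ref{thm resolvent estimate X'X}, in its $z$-dependent refinement \eqref{eq. resolvent estimate z dependent}) by treating $V_1$ as the only genuinely perturbative term, since no smallness is assumed on it. Suppose $z$ is an eigenvalue of $P$ with $|\im z|\geq a$ and let $u\in\dom(P)\subset Q(P_0)$ be a corresponding eigenfunction, so that $(P-z)u=0$. Writing $P=\widetilde{P}_0+L_{A_1}+V_1$, where $L_{A_1}$ collects the purely magnetic terms in \eqref{L} (the ones controlled by $\epsilon$ via Lemma~\ref{L bounded}), we get $(\widetilde{P}_0+L_{A_1}-z)u=-V_1u$. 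The first step is therefore to argue that the operator $\widetilde{P}_0+L_{A_1}$ still satisfies a resolvent estimate of the form $\|u\|_X\lesssim\|(\widetilde{P}_0+L_{A_1}-z)u\|_{X'}$ with constant $C_0/(1-\epsilon/\epsilon_0)$: this is exactly the Neumann-series/absorption argument from the proof of Theorem~\ref{thm resolvent estimate X'X} in the general case, using that $\|L_{A_1}\|_{\mathcal{B}(X,X')}\leq C_L\epsilon$ by Lemma~\ref{L bounded} and choosing $\epsilon_0$ so that $C_L\epsilon_0=1/C_0$ (now with the sharper, $\im z$-dependent $C_0$).

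The second step is to apply this estimate to the eigenfunction: $\|u\|_X\leq \frac{C_0(1+\|W\|_{L^\infty})}{1-\epsilon/\epsilon_0}\|V_1u\|_{X'}$. Since $X'=Y'+L^{q'}$ and $\|f\|_{X'}\leq\|f\|_{L^{q'}}$, we bound $\|V_1u\|_{X'}\leq\|V_1u\|_{L^{q'}}$, and then by H\"older with $1/q'=1/r+1/q$ (recall $q=2r'$, so $1/q'-1/q=1/r$) we get $\|V_1u\|_{L^{q'}}\leq\|V_1\|_{L^r}\|u\|_{L^q}\leq\|V_1\|_{L^r}\|u\|_X$. Combining and cancelling $\|u\|_X$ — which is legitimate provided $u\neq 0$, and one must check $u\in X$, i.e.\ that the eigenfunction genuinely lies in the space $X$; this uses $\dom(P)\subset Q(P_0)$ together with elliptic regularity for $\widetilde P_0$ and the smoothing estimate to promote $u$ from the form domain into $Y\cap L^q=X$ — yields $1\leq \frac{C_0(1+\|W\|_{L^\infty})}{1-\epsilon/\epsilon_0}\|V_1\|_{L^r}$, i.e.\ the estimate \eqref{spectral estimate complex potentials} without the power of $|\im z|$.

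The third step is to recover the gain $|\im z|^{1-\frac{n}{2r}}$. This is where the $\im z$-dependence of the constant $C_0$ in \eqref{eq. resolvent estimate z dependent} (to be stated in this section) does the work: tracking how each of the four constituent estimates \eqref{9 inequalities 1}--\eqref{9 inequalities 4} scales in $|\im z|$ for large $|\im z|$, the $L^{q'}\to L^q$ bound \eqref{eq. Lq'Lq in thm} and the smoothing bounds \eqref{eq. smoothing 1}--\eqref{eq. smoothing 4} all carry explicit negative powers of $|\im z|$; interpolating/combining them as in the proof of Theorem~\ref{thm resolvent estimate X'X} produces a factor $|\im z|^{-(1-\frac{n}{2r})}$ (equivalently $|\im z|^{-\frac{n}{2}(1/q'-1/q)}$ in the Laplacian heuristic of the remark after Theorem~\ref{thm resolvent estimate X'X}) on the $L^q\leftarrow L^{q'}$ piece. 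Feeding this refined constant into the H\"older argument above gives $|\im z|^{1-\frac{n}{2r}}\|u\|_X\lesssim \|V_1\|_{L^r}\|u\|_X$, which is \eqref{spectral estimate complex potentials} after division.

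I expect the main obstacle to be the bookkeeping in the third step — obtaining the \emph{sharp} power of $|\im z|$ uniformly over the allowed range of $r$ (and hence $q$), since the smoothing part and the Strichartz/$L^{q'}\to L^q$ part scale differently and must be balanced carefully; this is precisely what the more precise resolvent estimate \eqref{eq. resolvent estimate z dependent} of this section is designed to encode, so once that is in place the argument is short. A secondary technical point, worth a sentence or two, is the $m$-sectorial realization of $P$: one must invoke Lemma~\ref{lemma relative form bounded} to know that $P$ with $\dom(P)\subset Q(P_0)$ is well-defined and that eigenfunctions lie in $Q(P_0)$, and then bootstrap to $u\in\dom(\widetilde P_0)$ (so that the eigenvalue equation may be paired against the commutator $\lambda^W$ and the resolvent estimate applies verbatim) — this density/core issue is the analogue of the one handled in the proof of Theorem~\ref{thm smoothing}.
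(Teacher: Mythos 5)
Your proposal is correct and follows essentially the same route as the paper: the paper also applies the $z$-dependent resolvent estimate for $\widetilde P_0$ to the eigenvalue equation $(P_0-z)u=-Lu$, splits $L$ into the $\mathcal{O}(\epsilon)$ magnetic part and the $V_1$ part, uses H\"older with $q=2r'$ on the latter, and divides by $\|u\|_{X(z)}$; your absorb-the-magnetic-part-first organization and your step of tracking the $|\im z|$ powers separately are algebraically equivalent to the paper's device of building the weights directly into the norms $X(z)$, $X(z)'$. The one place where your justification diverges is the regularity of the eigenfunction: you propose to bootstrap $u$ into $\dom(\widetilde P_0)$ so that the a priori estimate applies verbatim, but this is neither available in general (for merely Lipschitz $A_1$ the eigenfunction of $P$ need not lie in the operator domain of $\widetilde P_0$) nor needed. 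The paper instead proves the continuous dense embedding $Q(P_0)\subset X(z)$ (Lemma~\ref{lemma continuous embedding}) and extends the resolvent to a bounded operator in $\mathcal{B}(X(z)',X(z))$ (Theorem~\ref{thm more convenient version of the main theorem}, estimate \eqref{eq. resolvent estimate z dependent with inverse on left}), so that only $u\in\dom(P)\subset Q(P_0)$ is required and the eigenvalue equation is interpreted first in $Q(P_0)'$ and then in $X(z)'$.
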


\begin{remark}
Since the left hand side of \eqref{spectral estimate complex potentials} is $\geq a^{1-\frac{n}{2r}}$ by assumption, Theorem~ \ref{thm complex potentials} implies that for any  $a>0$ and $0\leq \epsilon<\epsilon_0$, there exists $v_0=v_0(a,\epsilon)$ such that for $\|V\|_{L^r}\leq v_0$, all eigenvalues $z\in\sigma(P)$ are contained in $\set{z\in\C}{|\im z|\leq a}$. 
\end{remark}

\begin{proof}[Proof of Theorem \ref{thm complex potentials}]
Assume that $z\in\C$, with $|\im z|\geq a$, is an eigenvalue of~$P$, i.e.\ there exists $u\in \dom(P)$, with $\|u\|_{L^2}=1$, such that $Pu=zu$. Since $P_0\in \mathcal{B}(Q(P_0),Q(P_0)')$ and $L\in \mathcal{B}(Q(P_0),Q(P_0)')$ by Lemma \ref{lemma relative form bounded} ii), we have
\begin{align}\label{eigenvalue equation in Q(P0)'}
(P_0-z)u=-Lu\quad \mbox{in  }Q(P_0)'.
\end{align}
Since $Q(P_0)\subset X(z)$ densely and continuously, by Lemma \ref{lemma continuous embedding}, we have $u\in X(z)$, $\|u\|_{X(z)}\neq 0$, and \eqref{eigenvalue equation in Q(P0)'} implies that $(P_0-z)u+Lu=0$ in $X(z)'$. By Lemma \ref{L bounded}, $L\in\mathcal{B}(X(z),X(z)')$, and thus $(P_0-z)u\in X(z)'$. This means that 
\begin{align}\label{eigenvalue equation in X(z)'}
(P_0-z)u=-Lu\quad \mbox{in  }X(z)'.
\end{align} 
Then \eqref{eigenvalue equation in X(z)'} and \eqref{eq. resolvent estimate z dependent with inverse on left} yield
\begin{align}\label{lower bound for Lu}
\frac{1}{C_0(1+\|W\|_{L^{\infty}})}\|u\|_{X(z)}\leq \|(P_0-z)u\|_{X(z)'}=\|Lu\|_{X(z)'}.
\end{align}
We estimate the right hand side from above, as in the proof of the general case of Theorem \ref{thm resolvent estimate X'X} in Section \ref{section Proof main theorem, general case}, by
\begin{equation*}\begin{split}
\|Lu\|_{X(z)'}&\leq \left(\epsilon C_{n,\mu,\delta}+2\epsilon(1+B_0)|\im z|^{-1}+|\im z|^{n\left(\frac{1}{2}-\frac{1}{q}\right)-1}\|V_1\|_{L^{r}}\right)\|u\|_{X(z)}\\
&\leq \left(\epsilon C_{n,\mu,\delta,a,B_0}+|\im z|^{n\left(\frac{1}{2}-\frac{1}{q}\right)-1}\|V_1\|_{L^{r}}\right)\|u\|_{X(z)}
\end{split}
\end{equation*}
Together with \eqref{lower bound for Lu}, this implies that
\begin{align*}
\|u\|_{X(z)}\leq C_0(1+\|W\|_{L^{\infty}})\left(\epsilon C_{n,\mu,\delta,a,B_0}+|\im z|^{n\left(\frac{1}{2}-\frac{1}{q}\right)-1}\|V_1\|_{L^{r}}\right)\|u\|_{X(z)}.
\end{align*}
Dividing both sides by $\|u\|_{X(z)}\neq 0$, it follows that any eigenvalue $z$ of $P$ with $|\im z|\geq a$, must satisfy the inequality
\begin{align*}
1\leq C_0(1+\|W\|_{L^{\infty}})\left(\epsilon C_{n,\mu,\delta,a,B_0}+|\im z|^{n\left(\frac{1}{2}-\frac{1}{q}\right)-1}\|V_1\|_{L^{r}}\right).
\end{align*} 
If we set $\epsilon_0=1/(C_0(1+\|W\|_{L^{\infty}}) C_{n,\mu,\delta,a,B_0})$,
then this estimate is equivalent to~\eqref{spectral estimate complex potentials}.
\end{proof}

Instead of the Landau Hamiltonian \eqref{constant magnetic field}, we can also consider the Pauli operator with constant magnetic field. For simplicity, we assume that $n=2$ here, but the general case when $n$ is even can be handled with no additional difficulty. On $\mathcal{D}(\R^2,\C^2)$, the Pauli operator is given by
\begin{align}\label{Pauli}
P=
\begin{pmatrix}
(-\I\nabla+A(x))^2+B(x)&0\\
0&(-\I\nabla+A(x))^2-B(x)
\end{pmatrix}
+W(x)
+V_1(x).
\end{align}
Here, $A=(A^1,A^2)=(A_0^1,A_0^2)+(A_1^1,A_1^2)$ and $B=\partial_{1}A^2-\partial_{2}A^1$. We choose $A_0(z)=\frac{B_0}{2}(-y,x)$ for $z=(x,y)\in\R^2$ and $B_0>0$. Although we could easily allow $W$ and $V_1$ to be matrix-valued potentials, we assume that they are scalar multiples of the identity in $\C^2$.
\begin{corollary}\label{thm Pauli}
Assume that $n=2$ and that $P$ is the Pauli operator \eqref{Pauli}. Assume also that $A_1,V_1$ satisfy $A4_{\C}$--$A5_{\C}$. For $a>0$ fixed, there exists $\epsilon_0>0$ such that whenever $A_1$ satisfies $A4_{\C}$ with $\epsilon<\epsilon_0$, then every eigenvalue $z$ of $P$ with $|\im z|\geq a$ satisfies the estimate
\begin{align*}
|\im z|^{1-\frac{n}{2r}}\leq \frac{C_0(1+B_0+\|W\|_{L^{\infty}})}{1-\epsilon/\epsilon_0}\|V_1\|_{L^r}.
\end{align*}
Here, $C_0$ is the constant in the estimate \eqref{eq. resolvent estimate z dependent}, and $\epsilon_0$ depends on $n,\delta,\delta',\mu,a,B_0,C_0$.
\end{corollary}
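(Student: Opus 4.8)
The plan is to reduce Corollary~\ref{thm Pauli} to Theorem~\ref{thm complex potentials} by treating the Pauli operator componentwise. The key observation is that the Pauli operator \eqref{Pauli}, after the additive splitting of $A$ into $A_0$ and $A_1$, is (modulo the bounded, scalar perturbations $W$ and $V_1$) block-diagonal, with the two diagonal blocks being
\begin{align*}
P_{\pm}=(-\I\nabla+A(x))^2\pm B(x).
\end{align*}
With the choice $A_0(z)=\tfrac{B_0}{2}(-y,x)$ the unperturbed magnetic field is $\partial_1 A_0^2-\partial_2 A_0^1=B_0$, so $B=B_0+B_1$ with $B_1=\partial_1 A_1^2-\partial_2 A_1^1$. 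Hence $P_{\pm}=P_0\pm B_0+L_{\pm}$, where $P_0$ is the Landau Hamiltonian \eqref{constant magnetic field} for $n=2$ and $L_{\pm}=L\pm B_1$ with $L$ as in \eqref{L}. The constant shift $\pm B_0$ is harmless: it merely translates the spectrum and can be absorbed into the spectral parameter, i.e.\ $z$ is an eigenvalue of $P_{\pm}$ iff $z\mp B_0$ is an eigenvalue of $P_0+L_{\pm}$, with $\im(z\mp B_0)=\im z$.

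The main point to check is that $L_{\pm}$ satisfies the same hypotheses as $L$ in Theorem~\ref{thm complex potentials}, so that the resolvent estimate applies. The extra term $B_1=\partial_1 A_1^2-\partial_2 A_1^1$ is controlled by $|\nabla A_1|\lesssim\epsilon\langle x\rangle^{-1-\delta}$ under $A4_\C$, hence $B_1\in L^\infty$ with $|B_1(x)|\lesssim\epsilon\langle x\rangle^{-1-\delta}$; in particular $\|\langle x\rangle^{\frac{1+\mu}{2}}E_{-1/2}B_1 u\|_{L^2}\lesssim\|B_1\langle x\rangle^{-\frac{1+\mu}{2}}\|_{L^\infty}\|u\|_{L^2}\lesssim\epsilon\|u\|_X$ (since $\mu\le\delta$), so $B_1\in\mathcal{B}(X,X')$ with norm $\lesssim\epsilon$ by the same argument as in Lemma~\ref{L bounded}. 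The splitting of $V_1$ as the non-small part is also preserved. Thus, running the proof of Theorem~\ref{thm complex potentials} verbatim for each block $P_{\pm}$ — applying \eqref{eq. resolvent estimate z dependent with inverse on left} to $P_0$ on the eigenfunction (the appropriate component of the $\C^2$-valued eigenfunction of $P$, which is nonzero for at least one of the two blocks), and bounding $\|L_{\pm}u\|_{X(z)'}$ from above — yields exactly the inequality
\begin{align*}
1\leq C_0(1+B_0+\|W\|_{L^\infty})\left(\epsilon C_{n,\mu,\delta,a,B_0}+|\im z|^{n(\frac12-\frac1q)-1}\|V_1\|_{L^r}\right),
\end{align*}
where the extra $B_0$ inside the prefactor comes from estimating the $B_1$ contribution against $\|\langle x\rangle^{-1}A_0\|_{L^\infty}\sim B_0$ (just as the $2 A_0\cdot A_1$ term in \eqref{L} already produced a $B_0$). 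Choosing $\epsilon_0$ correspondingly and rearranging gives the claimed bound.

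The only genuine subtlety — and the step I expect to require the most care — is the bookkeeping for the block-diagonal structure: one must note that an eigenfunction $u=(u_+,u_-)\in\mathcal{D}(\R^2,\C^2)$ of $P$ need not have both components nonvanishing, but since $\|u\|_{L^2}=1$ at least one component $u_\pm$ is nonzero, and that component satisfies $(P_0\pm B_0-z)u_\pm=-L_\pm u_\pm$ in the appropriate $X(z)'$ sense (the off-diagonal coupling being zero, and $W$, $V_1$ acting as scalars). Applying the resolvent estimate to that component then produces the bound; the other component, if nonzero, produces the same bound with $\mp$ in place of $\pm$, and either way $\im z$ is the same. Beyond this, everything is a direct transcription of the proof of Theorem~\ref{thm complex potentials}, using that the $m$-sectorial realization of the Pauli operator with form domain contained in $Q(P_0)\oplus Q(P_0)$ exists by the same relative-form-boundedness argument (Lemma~\ref{lemma relative form bounded}) applied blockwise.
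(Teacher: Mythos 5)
Your proposal is correct and follows essentially the same route as the paper: the block-diagonal structure reduces the claim to the scalar operators $P_{\pm}=(-\I\nabla+A)^2\pm B+W+V_1$, and one reruns the proof of Theorem \ref{thm complex potentials} after absorbing the constant $\pm B_0$ (into $W$ or the spectral parameter) and treating $B_1=\partial_1A_1^2-\partial_2A_1^1$, which is $O(\epsilon\langle x\rangle^{-1-\delta})$ by $A4_{\C}$, as part of the small perturbation. The paper leaves all of this implicit; your only slip is attributing the extra $B_0$ in the prefactor to the $B_1$ estimate rather than to the absorption of the constant field into $W$, which is immaterial for the stated bound.
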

\begin{proof}
In view of the direct sum structure of \eqref{Pauli}, the proof reduces to proving \eqref{spectral estimate complex potentials} for eigenvalues $z$ of the Schr\"odinger operators
\begin{align*}
P_{\pm}=(-\I\nabla+A(x))^2\pm B(x)+W(x)+V_1(x).
\end{align*}
The proof is analogous to the one of Theorem \ref{thm complex potentials}.
\end{proof}

\begin{remark}
Note that the reason we were able to remove the smallness assumption on $V_1$, but not on $A_1$ is that the smoothing part of the $X(z)$ norm is $z$-independent (see Appendix \ref{Section A more precise version of Theorem}).
\end{remark}

\appendix

\section{Basic facts about the Weyl calculus}\label{Appendix psdos}

We give a brief outline of the pseudodifferential calculus used in the second part of Section \ref{section Lq'Lq and smoothing estimates}. The facts stated here are a condensation of the more general results contained in \cite[Section 3]{Doi2005}. We restrict ourselves to a degree of generality that is sufficient for the situation considered in the main body of the text. To this end, we follow in part the exposition in \cite[Chapter 1]{NicolaRodino2010}. For generalizations, the reader is referred to \cite{Doi2005,Hoermander2007}.    

\begin{definition}[Weights]
A continuous function $\Phi:\R^{2n}\to (0,\infty)$ is called a \emph{weight}. It is called a \emph{sublinear weight} if
\begin{align}\label{sublinear weight}
1\leq \Phi(X)\lesssim 1+|X|,\quad X\in\R^{2n}.
\end{align}
It is called a \emph{temperate weight} if, for some $s>0$,
\begin{align*}
\Phi(X+Y)\lesssim \Phi(X)(1+|Y|)^s,\quad X,Y\in\R^{2n}.
\end{align*}
\end{definition}

Given two weights $\Phi,\Psi$, let $g$ be the following metric on $\R^{2n}$,
\begin{align}\label{metric Phi Psi}
g=\Phi(X)^{-2}\rd x^2+\Psi(X)^{-2}\rd \xi^2,\quad X=(x,\xi)\in\R^{2n}.
\end{align}

\begin{definition}[Symbol classes]\label{Definition symbol classes}
Let $\Phi$ and $\Psi$ be sublinear, temperate weights, and let $g$ be the metric \eqref{metric Phi Psi}. If $m$ be a temperate weight, we denote by $S(m,g)$ the space
\begin{align*}
S(m,g)&=\set{a\in C^{\infty}(\R^{2n})}{\forall\alpha,\beta\in\N^n\,\exists\, C_{\alpha,\beta}>0\mbox{ s.t.}\\
&\qquad|\partial_x^{\alpha}\partial_{\xi}^{\beta}a(X)|\leq C_{\alpha,\beta}m(X)\Phi(X)^{-|\alpha|}\Psi(X)^{-|\beta|}}.
\end{align*}
The family of seminorms
\begin{align*}
\|a\|_{k,S(m,g)}:=\sup_{|\alpha|+|\beta|\leq k}\sup_{X\in\R^{2n}}|\partial_x^{\alpha}\partial_{\xi}^{\beta}a(X)|m(X)^{-1}\Phi(X)^{|\alpha|}\Psi(X)^{|\beta|},\quad k\in\N,
\end{align*}
defines a Fr\'echet topology on $S(m,g)$.
\end{definition}

\begin{lemma}\label{lemma chiepsilon}
Let $\chi\in C_c^{\infty}(\R^{2n})$ be such that $\chi(x,\xi)=1$ in a neighborhood of the origin in $\R^{2n}$, and let $\Phi,\Psi$ be any sublinear weights. Then the family $\chi_{\epsilon}(x,\xi):=\chi(\epsilon x,\epsilon \xi)$, $0\leq \epsilon\leq 1$, is bounded in $S(1,g)$. Moreover, if $m_0$ is any temperate weight that tends to infinity at infinity, then $\chi_{\epsilon}\to 1$ in $S(m_0,g)$ and $\partial_{\xi}^{\alpha}\partial_x^{\beta}\chi_{\epsilon}\to 0$ in $S(m_0,g)$ for $|\alpha|+|\beta|\geq 1$ as $\epsilon\to 0$. 
\end{lemma}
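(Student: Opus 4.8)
The plan is to verify the two claims directly from the definition of the symbol seminorms, exploiting the explicit scaling structure $\chi_\eps(x,\xi)=\chi(\eps x,\eps\xi)$ and the fact that both weights $\Phi,\Psi$ are sublinear, hence bounded below by $1$.

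For the uniform boundedness of $\{\chi_\eps\}$ in $S(1,g)$, first I would compute $\partial_x^\al\partial_\xi^\be\chi_\eps(x,\xi)=\eps^{|\al|+|\be|}(\partial_x^\al\partial_\xi^\be\chi)(\eps x,\eps\xi)$. Since $\chi$ is smooth with compact support, $\|\partial_x^\al\partial_\xi^\be\chi\|_{L^\infty}<\infty$ for every $\al,\be$, so $|\partial_x^\al\partial_\xi^\be\chi_\eps(X)|\leq \eps^{|\al|+|\be|}\|\partial_x^\al\partial_\xi^\be\chi\|_{L^\infty}$. The required bound is $|\partial_x^\al\partial_\xi^\be\chi_\eps(X)|\leq C_{\al,\be}\Phi(X)^{-|\al|}\Psi(X)^{-|\be|}$ with $C_{\al,\be}$ independent of $\eps$. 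Using \eqref{sublinear weight}, $\Phi(X)^{-|\al|}\Psi(X)^{-|\be|}\leq 1$, so taking $C_{\al,\be}=\|\partial_x^\al\partial_\xi^\be\chi\|_{L^\infty}$ works for all $\eps\in[0,1]$ (when $|\al|+|\be|=0$ simply note $\|\chi\|_{L^\infty}<\infty$; when $\eps=0$, $\chi_0\equiv 1$ and all derivatives vanish). This shows the family is bounded in $S(1,g)$.

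For the convergence statements, fix a temperate weight $m_0$ tending to infinity at infinity. Consider first $\chi_\eps\to 1$ in $S(m_0,g)$; equivalently I must show $\chi_\eps-1\to 0$ in $S(m_0,g)$, i.e.\ all seminorms $\|\chi_\eps-1\|_{k,S(m_0,g)}\to 0$. For derivatives of order $|\al|+|\be|\geq 1$ this coincides with the statement about $\partial_x^\al\partial_\xi^\be\chi_\eps\to 0$, so it suffices to treat: (i) $\sup_X |\chi_\eps(X)-1|\,m_0(X)^{-1}\to 0$, and (ii) for $|\al|+|\be|\geq 1$, $\sup_X |\partial_x^\al\partial_\xi^\be\chi_\eps(X)|\,m_0(X)^{-1}\Phi(X)^{|\al|}\Psi(X)^{|\be|}\to 0$. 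For (i), split $\R^{2n}$ into the region $|X|\leq R/\eps$ and $|X|>R/\eps$ where $\chi\equiv 1$ on $B(0,R)$: on the far region $\chi_\eps(X)=1$ so the contribution vanishes; on $|X|\leq R/\eps$ we have $|\chi_\eps(X)-1|\leq 1+\|\chi\|_{L^\infty}$ while $m_0(X)^{-1}\leq \sup_{|X|\leq R/\eps}m_0(X)^{-1}$—but this does not tend to zero near the origin. The correct argument: since $\chi\equiv 1$ near $0$, $\chi_\eps\equiv 1$ on $B(0,R/\eps)$, which grows to cover all of $\R^{2n}$; thus $\chi_\eps(X)-1$ is supported in $\{|X|\geq R/\eps\}$, and there $m_0(X)^{-1}\leq \sup_{|Y|\geq R/\eps}m_0(Y)^{-1}\to 0$ because $m_0\to\infty$ at infinity. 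Hence (i) holds. For (ii), $\partial_x^\al\partial_\xi^\be\chi_\eps$ is supported in $\{|X|\geq R/\eps\}$ (derivatives of a constant vanish), and using the temperate weight bound $\Phi(X),\Psi(X)\lesssim 1+|X|$ together with temperance of $m_0$, the factor $m_0(X)^{-1}\Phi(X)^{|\al|}\Psi(X)^{|\be|}$ grows only polynomially, while $|\partial_x^\al\partial_\xi^\be\chi_\eps(X)|\leq \eps^{|\al|+|\be|}\|\partial^{\al}_x\partial^\be_\xi\chi\|_{L^\infty}$ carries a factor $\eps^{|\al|+|\be|}$; restricting to $|X|\geq R/\eps$ forces $|X|^{|\al|+|\be|}$ to be compared against $\eps^{-(|\al|+|\be|)}$... more precisely, on the support $R/\eps\leq |X|$, the combination $\eps^{|\al|+|\be|}(1+|X|)^{|\al|+|\be|}$ need not be small, so I instead argue: since $\chi$ has compact support, $(\partial^\al_x\partial^\be_\xi\chi)(\eps X)$ is supported in $\eps|X|\lesssim 1$, i.e.\ $|X|\lesssim 1/\eps$; combined with $|X|\geq R/\eps$ this restricts $X$ to a dyadic shell $|X|\sim 1/\eps$, on which $m_0(X)^{-1}\lesssim \sup_{|Y|\sim 1/\eps}m_0(Y)^{-1}$ and the polynomial factor is $\lesssim \eps^{-(|\al|+|\be|)}$; multiplying by $\eps^{|\al|+|\be|}$ gives a bounded factor times $\sup_{|Y|\geq R/\eps}m_0(Y)^{-1}\to 0$. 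This yields (ii).

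The main obstacle is the bookkeeping in step (ii): one must carefully exploit that the derivatives $\partial^\al_x\partial^\be_\xi\chi_\eps$ are supported in an annular region $|X|\sim 1/\eps$ (coming from compact support of $\nabla\chi$ away from the origin), and on that region the decay of $m_0^{-1}$ exactly compensates the polynomial growth of $\Phi^{|\al|}\Psi^{|\be|}$ together with the $\eps$-powers. Once the localization to $|X|\sim 1/\eps$ is in hand, everything reduces to the single fact $m_0(X)^{-1}\to 0$ as $|X|\to\infty$. The boundedness claim is entirely routine by comparison with $\Phi,\Psi\geq 1$.
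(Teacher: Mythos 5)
Your argument for the uniform boundedness of $\{\chi_\eps\}$ in $S(1,g)$ contains a genuine logical error: the inequality $\Phi(X)^{-|\al|}\Psi(X)^{-|\be|}\leq 1$ goes the \emph{wrong way}. The bound you must produce is $|\partial_x^{\al}\partial_{\xi}^{\be}\chi_{\eps}(X)|\leq C_{\al,\be}\,\Phi(X)^{-|\al|}\Psi(X)^{-|\be|}$, and since the right-hand side can be as small as $\approx |X|^{-(|\al|+|\be|)}$, knowing only that the left-hand side is $\leq \eps^{|\al|+|\be|}\|\partial^{\al}_x\partial^{\be}_{\xi}\chi\|_{L^{\infty}}$ and that the weight factor is $\leq 1$ proves nothing. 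Your closing remark that ``the boundedness claim is entirely routine by comparison with $\Phi,\Psi\geq 1$'' is therefore incorrect. The correct mechanism is the one you yourself deploy later in step (ii): for $|\al|+|\be|\geq 1$ the derivative $\partial_x^{\al}\partial_{\xi}^{\be}\chi_{\eps}$ is supported where $\eps|X|\leq R'$ (with $\supp\chi\subset B(0,R')$), and on that set the \emph{upper} half of the sublinearity assumption, $\Phi(X),\Psi(X)\lesssim 1+|X|\lesssim \eps^{-1}$, gives $\Phi(X)^{|\al|}\Psi(X)^{|\be|}\lesssim \eps^{-(|\al|+|\be|)}$, which exactly cancels the factor $\eps^{|\al|+|\be|}$ from the chain rule. (For $|\al|+|\be|=0$ the bound $\|\chi\|_{L^{\infty}}$ does suffice.) So the first claim is salvageable, but not by the argument you wrote; you need to insert the support localization there as well.

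The convergence part of your proposal is correct: the reduction of $\chi_\eps\to 1$ to the two suprema (i) and (ii), the observation that $\chi_\eps-1$ and all derivatives of $\chi_\eps$ are supported in $\{|X|\geq R/\eps\}$, and the cancellation of $\eps^{|\al|+|\be|}$ against $\Phi^{|\al|}\Psi^{|\be|}\lesssim\eps^{-(|\al|+|\be|)}$ on the shell $|X|\sim 1/\eps$, leaving the factor $\sup_{|Y|\geq R/\eps}m_0(Y)^{-1}\to 0$, is exactly the right argument. (Your reduction of the statement about $\partial_{\xi}^{\al}\partial_x^{\be}\chi_{\eps}\to 0$ in $S(m_0,g)$ to the seminorms of $\chi_\eps-1$ is also fine; there the inequality $\Phi,\Psi\geq 1$ is used in the correct direction.) Note that the paper does not prove this lemma directly: it cites \cite[Lemma 1.1.4]{NicolaRodino2010} for the first two claims and asserts the derivative statement ``by inspection of the proof.'' Your self-contained verification is therefore a legitimately different and arguably more transparent route --- but only once the boundedness step is repaired as above.
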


\begin{proof}
\cite[Lemma 1.1.4]{NicolaRodino2010}. The claim that $\partial_{\xi}^{\alpha}\partial_x^{\beta}\chi_{\epsilon}\to 0$ in $S(m_0,g)$ for $|\alpha|+|\beta|\geq 1$ as $\epsilon\to 0$ is not part of the statement there, but follows easily by inspection of the proof.
\end{proof}

\begin{definition}[Planck function]
Let $\Phi,\Psi$ be two weights. The function
\begin{align}\label{Planck function}
h(X):=\Phi(X)^{-1}\Psi(X)^{-1},\quad X\in\R^{2n}
\end{align}
is called the \emph{Planck function}. We say that $h$ satisfies the \emph{strong uncertainty principle} if there exists $\gamma>0$ such that
\begin{align}\label{strong uncertainty principle}
h(X)\lesssim (1+|X|)^{-\gamma},\quad X\in\R^{2n}.
\end{align}
\end{definition}

\begin{remark}\label{Remark cases symbol classes of interest}
The cases of most interest to us are the following.
\begin{enumerate}
\item If $g=g_0$, then $\Phi(X)=1$, $\Psi(X)=\langle X\rangle$, $h(X)=\langle X\rangle^{-1}$.
\item If $g=g_1$, then $\Phi(X)=\langle x\rangle$, $\Psi(X)=\langle \xi\rangle$, $h(X)=\langle x\rangle^{-1}\langle\xi\rangle^{-1}$. 
\item The case $\Phi(X)=1$, $\Psi(X)=\langle \xi\rangle$, $h(X)=\langle \xi\rangle^{-1}$ corresponds to the standard symbol class $S^m_{1,0}$.
\end{enumerate}
\end{remark}

Note that under the assumption \eqref{sublinear weight}, $h$ always satisfies $h(X)\leq 1$ (the uncertainty principle). 
Under the assumption of the strong uncertainty principle, it is meaningful to speak about the asymptotic expansion of symbols in $a\in S(m,g)$.

\begin{lemma}\label{lemma asymptotic expansion}
Let $(a_j)_{j=0}^{\infty}$ be a sequence of symbols $a_j\subset S(mh^j,g)$. Then there exist $a\in S(n,g)$ (uniquely determined modulo Schwartz functions) such that
\begin{align*}
a\sim\sum_{j=0}^{\infty} a_j
\end{align*}
in the sense that for every $N\in\N$,
\begin{align*}
a-\sum_{j=0}^{N-1}a_j\in S(mh^N,g).
\end{align*}
\end{lemma}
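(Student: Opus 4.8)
The plan is to construct $a$ by the standard Borel-type summation procedure, adapted to the Weyl calculus with the metric $g$ and the strong uncertainty principle. First I would fix a cutoff $\chi\in C_c^{\infty}(\R^{2n})$ with $\chi\equiv 1$ near the origin, and set $\chi_{\epsilon}(X)=\chi(\epsilon X)$ as in Lemma~\ref{lemma chiepsilon}. The idea is to choose a rapidly decreasing sequence $\epsilon_j\downarrow 0$ and define
\begin{align*}
a(X):=\sum_{j=0}^{\infty}\bigl(1-\chi_{\epsilon_j}(X)\bigr)a_j(X).
\end{align*}
Because $a_j\in S(mh^j,g)$ and $h$ tends to zero at infinity by \eqref{strong uncertainty principle}, each term $(1-\chi_{\epsilon_j})a_j$ is supported where $|X|\gtrsim \epsilon_j^{-1}$, hence where $h(X)^{j}$ — and thus the weight $mh^j$ relative to $m$ — is as small as we like; the factor $(1-\chi_{\epsilon_j})$ is itself bounded in $S(1,g)$ uniformly in $j$. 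This is what lets us make the series converge.

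The key step is the quantitative choice of the $\epsilon_j$. For a fixed seminorm index $k$, I would estimate $\|(1-\chi_{\epsilon_j})a_j\|_{k,S(mh^{j-1},g)}$: writing $mh^j=(mh^{j-1})\cdot h$ and using that $h(X)\lesssim(1+|X|)^{-\gamma}\lesssim\epsilon_j^{\gamma}$ on $\supp(1-\chi_{\epsilon_j})$, together with the product rule and the uniform bound on $\chi_{\epsilon_j}$ in $S(1,g)$ from Lemma~\ref{lemma chiepsilon}, one gets a bound of the form $C_{k,j}\,\epsilon_j^{\gamma}$. Choosing $\epsilon_j$ small enough that $C_{j,j}\,\epsilon_j^{\gamma}\leq 2^{-j}$ for every $j$ ensures that for each fixed $k$ the tail $\sum_{j\geq k}(1-\chi_{\epsilon_j})a_j$ converges in $S(mh^{k-1},g)$ (in fact in $S(mh^{k},g)$ after a harmless reindexing). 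Hence the series converges in $S(m,g)$ (take $k=1$), so $a$ is a well-defined symbol, and for each $N$,
\begin{align*}
a-\sum_{j=0}^{N-1}a_j=-\sum_{j=0}^{N-1}\chi_{\epsilon_j}a_j+\sum_{j=N}^{\infty}(1-\chi_{\epsilon_j})a_j.
\end{align*}
The second sum lies in $S(mh^N,g)$ by the convergence just established, and the first sum lies in $\mathcal{S}(\R^{2n})$ since each $\chi_{\epsilon_j}a_j$ is compactly supported and smooth; in particular it lies in $S(mh^N,g)$ as well. This gives $a\sim\sum a_j$.

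Finally, for uniqueness: if $a,\tilde a$ both have the asymptotic expansion $\sum a_j$, then $a-\tilde a\in S(mh^N,g)$ for every $N$. Since $h(X)\to 0$ at infinity, $\bigcap_N S(mh^N,g)$ consists of functions all of whose derivatives decay faster than any power of $(1+|X|)$ — i.e. Schwartz functions — so $a-\tilde a\in\mathcal{S}(\R^{2n})$, which is the asserted uniqueness modulo Schwartz functions. The main obstacle is bookkeeping the dependence of the seminorm constants $C_{k,j}$ on both indices so that a single diagonal choice $\epsilon_j\to 0$ works simultaneously for all seminorms; this is handled by the standard diagonal trick (demand the $j$-th term to be $\leq 2^{-j}$ in the $j$-th seminorm), which is why one only gets convergence, and the limit, modulo Schwartz functions rather than on the nose.
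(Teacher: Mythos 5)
Your argument is correct: this is the standard Borel-type summation proof, with the diagonal choice of $\epsilon_j$ and the observation that $h\lesssim\epsilon_j^{\gamma}$ on $\supp(1-\chi_{\epsilon_j})$ doing exactly the work needed, and the uniqueness argument via $\bigcap_N S(mh^N,g)\subset\mathcal{S}(\R^{2n})$ is also right. The paper does not prove the lemma itself but cites \cite[Proposition 1.1.6]{NicolaRodino2010}, whose proof is essentially the one you give, so there is nothing to add beyond noting that your ``harmless reindexing'' (the term $j=N$ already lies in $S(mh^N,g)$ without any gain from the cutoff) is indeed harmless.
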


\begin{proof}
\cite[Proposition 1.1.6]{NicolaRodino2010}. 
\end{proof}

\begin{definition}[Weyl quantization]\label{definition Weyl quantization}
Let $a\in S(m,g)$. Then the pseudodifferential operator
\begin{align*}
\Op^W(a)u(x):=a^W(x,D)u(x):=(2\pi)^{-n}\int_{\R^n}\int_{\R^n}\e^{\I(x-y)\cdot \xi}a\left(\frac{x+y}{2},\xi\right)u(y)\rd y\rd\xi,
\end{align*}
initially defined for $u\in\mathcal{S}(\R^n)$, is called the \emph{Weyl quantization} of the symbol $a$. 
We denote the class of (Weyl) pseudodifferential operators with symbols $a\in S(m,g)$ by $\Op^W(S(m,g))$. 
\end{definition}


\begin{proposition}[Adjoint]\label{Proposition selfadjointness of Weyl quantization}
Let $a\in S(m,g)$. Then $(a^W)^*=\overline{a}^W$, in the sense that
\begin{align*}
\langle a^W f,g\rangle=\langle f,\overline{a}^W g\rangle,\quad f,g\in\mathcal{S}(\R^n).
\end{align*}
In particular, $(a^W)^*=a^W$ if and only if $a$ is real-valued.
\end{proposition}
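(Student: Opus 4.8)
\textbf{Proof plan for Proposition~\ref{Proposition selfadjointness of Weyl quantization}.}

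The plan is to verify the identity $\langle a^W f, g\rangle = \langle f, \overline{a}^W g\rangle$ directly from the integral definition in Definition~\ref{definition Weyl quantization}, using Fubini's theorem to justify rearranging the order of integration. First I would write out $\langle a^W f, g\rangle = \int_{\R^n} (a^W f)(x)\,\overline{g}(x)\,\rd x$ and substitute the oscillatory integral formula for $a^W f(x)$, obtaining a triple integral in the variables $x, y, \xi$ with integrand $(2\pi)^{-n}\e^{\I(x-y)\cdot\xi} a\big(\tfrac{x+y}{2},\xi\big) f(y)\,\overline{g}(x)$. Since $f, g \in \mathcal{S}(\R^n)$ and $a \in S(m,g)$ has at most polynomial growth, the integrand is not absolutely integrable in general, so the first step is to make sense of the iterated integral: one regularizes by inserting a cutoff $\chi_\epsilon(y,\xi)$ as in Lemma~\ref{lemma chiepsilon} (or simply a Gaussian factor $\e^{-\epsilon|\xi|^2}$), which makes everything absolutely integrable, applies Fubini freely, and then passes to the limit $\epsilon \to 0$ using dominated convergence after one integration by parts in $y$ to gain decay in $\xi$.

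The key algebraic step, once Fubini is available, is the change of variables: in the regularized triple integral we swap the roles of $x$ and $y$ and replace $\xi$ by $-\xi$. Under $(x,y,\xi) \mapsto (y,x,-\xi)$ the phase $\e^{\I(x-y)\cdot\xi}$ is invariant, the midpoint $\tfrac{x+y}{2}$ is invariant, the symbol $a\big(\tfrac{x+y}{2},\xi\big)$ becomes $a\big(\tfrac{x+y}{2},-\xi\big)$ — but we must be careful: it is $\overline{a^W f(x)\,\overline{g}(x)}$ manipulations that matter, so instead I would compute $\langle f, \overline{a}^W g\rangle = \int f(x)\,\overline{(\overline{a}^W g)(x)}\,\rd x$ and expand $(\overline{a}^W g)(x)$ by the same formula with symbol $\overline{a}$, then conjugate. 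Conjugating the kernel of $\overline{a}^W$ turns $\e^{\I(x-y)\cdot\xi}$ into $\e^{-\I(x-y)\cdot\xi}$ and $\overline{a}\big(\tfrac{x+y}{2},\xi\big)$ into $a\big(\tfrac{x+y}{2},\xi\big)$; after the substitution $\xi \to -\xi$ and relabeling the integration variables one recognizes exactly the expression for $\langle a^W f, g\rangle$. The final assertion about self-adjointness is then immediate: $(a^W)^* = a^W$ iff $\overline{a}^W = a^W$ iff (by injectivity of the Weyl quantization on symbols, or simply by testing against Schwartz functions) $\overline{a} = a$, i.e.\ $a$ is real-valued.

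The main obstacle is purely the justification of the formal manipulations: the integrals are oscillatory rather than absolutely convergent, so one cannot naively apply Fubini or change variables. The cleanest route is to first establish the identity for symbols $a$ with compact support (or Schwartz symbols), where all integrals converge absolutely and Fubini plus the change of variables are trivially legitimate, and then extend to general $a \in S(m,g)$ by the density/approximation statement in Lemma~\ref{lemma chiepsilon}: writing $a_\epsilon = \chi_\epsilon a$, one has $a_\epsilon \to a$ in $S(m_0, g)$ for a slightly larger weight $m_0$, which gives convergence of $a_\epsilon^W f \to a^W f$ in $\mathcal{S}(\R^n)$ (using that Weyl quantization is continuous from $S(m_0,g)$ into, say, $\mathcal{B}(\mathcal{S},\mathcal{S})$), hence convergence of both sides of the desired identity. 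Everything else — the phase and midpoint invariances, the $\xi \to -\xi$ reflection — is a one-line computation once the analytic scaffolding is in place.
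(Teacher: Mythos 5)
Your proposal is correct and is essentially the standard argument; the paper itself offers no computation here but simply cites \cite[Proposition 1.2.10]{NicolaRodino2010}, whose proof is exactly the kernel-symmetry calculation you describe (the Weyl kernel satisfies $\overline{K_{\overline a}(y,x)}=K_a(x,y)$ because the phase $\e^{\I(x-y)\cdot\xi}$ and the midpoint $\tfrac{x+y}{2}$ behave correctly under swapping $x$ and $y$), together with the regularization of the oscillatory integrals that you outline. One small imprecision: after conjugating the kernel of $\overline a^W$ the relabeling $x\leftrightarrow y$ alone already reproduces $\langle a^Wf,g\rangle$, so the additional substitution $\xi\to-\xi$ you mention is superfluous and, if actually performed, would leave the symbol evaluated at $-\xi$; this does not affect the validity of the approach.
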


\begin{proof}
See e.g.\ \cite[Proposition 1.2.10]{NicolaRodino2010}.
\end{proof}

\begin{theorem}[Composition]\label{theorem composition}
Let $a\in S(m_1,g)$, $b\in S(m_2,g)$. Then there exists a symbol $c\in S(m_1m_2,g)$, denoted by $c:=a\# b$, such that $a^Wb^W=c^W$.  
Moreover, if the Planck function satisfies the strong uncertainty principle \eqref{strong uncertainty principle}, then we have the asymptotic expansion
\begin{align}\label{composition formula in S(m,g)}
c(x,\xi) \sim\sum_{\alpha,\beta}(-1)^{|\beta|}(\alpha!\beta!)^{-1}2^{-|\alpha+\beta|}\partial_{\xi}^{\alpha}D_x^{\beta}a(x,\xi)\partial_{\xi}^{\beta}D_x^{\alpha}b(x,\xi).
\end{align}
Moreover, the map $(a,b)\mapsto a\# b$ is continuous from $S(m_1,g)\times S(m_2,g)$ into $S(m_1m_2,g)$. 
\end{theorem}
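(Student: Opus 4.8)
\section*{Proof proposal}

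The plan is to realize $a\# b$ as an oscillatory integral and then extract the asymptotic expansion \eqref{composition formula in S(m,g)} by Taylor expansion, controlling all remainders by the metric weights. First I would record the integral formula for the Weyl product. For $a,b\in\mathcal{S}(\R^{2n})$ one checks directly, starting from the kernels of $a^W$ and $b^W$, that $a^Wb^W=c^W$ with
\[
c(X)=\pi^{-2n}\int_{\R^{2n}}\int_{\R^{2n}}\e^{2\I\sigma(X-Y,X-Z)}\,a(Y)\,b(Z)\,\rd Y\,\rd Z ,
\]
where $\sigma$ is the standard symplectic form on $\R^{2n}$ (with the normalization matching Definition \ref{definition Weyl quantization}); after the substitution $Y\mapsto X+Y$, $Z\mapsto X+Z$ this becomes
\[
c(X)=\pi^{-2n}\int\int \e^{2\I\sigma(Y,Z)}\,a(X+Y)\,b(X+Z)\,\rd Y\,\rd Z .
\]
For Schwartz symbols all integrals are absolutely convergent and no issue of definition arises.

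To make sense of the formula for general $a\in S(m_1,g)$, $b\in S(m_2,g)$ I would regularize $a,b$ by the cutoffs $\chi_\eps$ of Lemma \ref{lemma chiepsilon} and integrate by parts in the phase. The identities $\e^{2\I\sigma(Y,Z)}=\langle Y\rangle^{-2}\bigl(1-\tfrac14\Delta_Z\bigr)\e^{2\I\sigma(Y,Z)}$ and the symmetric one with $Y$ and $Z$ exchanged trade powers of $\langle Y\rangle$, $\langle Z\rangle$ for $Z$-derivatives of $b(X+Z)$ and $Y$-derivatives of $a(X+Y)$. Combined with the slowly varying and temperate properties of $\Phi,\Psi$ — which give $\Phi(X+Y)\simeq\Phi(X)$ on $|Y|\lesssim\Phi(X)$ and a two-sided polynomial bound in $|Y|$ otherwise, and likewise for $\Psi$ — this yields an absolutely convergent integral whose value is independent of the regularization, together with the crude bound $|c(X)|\lesssim m_1(X)m_2(X)$. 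The same integration by parts applied after differentiating in $X$ and using the Leibniz rule gives $c\in S(m_1m_2,g)$ with seminorms bounded by finitely many seminorms of $a$ and of $b$.

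Next I would Taylor-expand $a(X+Y)$ at $Y=0$ and $b(X+Z)$ at $Z=0$ to order $N$. Integrating the monomial contributions $Y^{\alpha}Z^{\beta}\e^{2\I\sigma(Y,Z)}$, interpreted as oscillatory integrals (Fourier transforms of polynomials), reproduces exactly the terms $(-1)^{|\beta|}(\alpha!\beta!)^{-1}2^{-|\alpha+\beta|}\partial_{\xi}^{\alpha}D_x^{\beta}a(X)\,\partial_{\xi}^{\beta}D_x^{\alpha}b(X)$ of \eqref{composition formula in S(m,g)}. For the Taylor remainder one inserts the integral form of the remainder and estimates $\partial^{\gamma}a$, $\partial^{\gamma'}b$ against the symbol bounds of $S(m_1,g)$, $S(m_2,g)$; each derivative beyond the leading order contributes a factor $\Phi(X)^{-1}$ or $\Psi(X)^{-1}$, while the integrations by parts in $Y$ (resp. $Z$) supply the complementary factor, so that the net gain per order is the Planck function $h(X)=\Phi(X)^{-1}\Psi(X)^{-1}$. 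This shows that $c-\sum_{|\alpha+\beta|<N}(\cdots)\in S(m_1m_2h^N,g)$ for every $N$. Here the strong uncertainty principle \eqref{strong uncertainty principle} is exactly what makes $m_1m_2h^N$ decrease with $N$ and renders the series an asymptotic expansion in the sense of Lemma \ref{lemma asymptotic expansion}, which in turn produces a well-defined $c=a\#b\in S(m_1m_2,g)$, unique modulo Schwartz functions, satisfying \eqref{composition formula in S(m,g)}. Since every estimate above is linear in finitely many seminorms of $a$ and of $b$, the bilinear map $(a,b)\mapsto a\#b$ is continuous from $S(m_1,g)\times S(m_2,g)$ into $S(m_1m_2,g)$.

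The main obstacle is the remainder estimate in the third step: one must bookkeep carefully how $\Phi,\Psi$ transform under the shifts $X\mapsto X+Y$, $X\mapsto X+Z$ over the whole integration region, splitting into the zone where the metric is essentially frozen and the complementary zone where only polynomial two-sided control from temperateness is available, and then balance the number of integrations by parts against the number of derivatives landing on $a$ and $b$. Everything else is the standard stationary-phase bookkeeping of the Weyl--H\"ormander calculus, and one may alternatively invoke \cite[Chapter 18]{Hoermander2007} or \cite[Section 1.2]{NicolaRodino2010}.
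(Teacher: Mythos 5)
Your sketch is correct and follows the standard Weyl--H\"ormander composition argument (oscillatory integral representation of $a\#b$, regularization, integration by parts against the phase, Taylor expansion, and remainder control via the slowly varying/temperate properties of $\Phi,\Psi$ together with the strong uncertainty principle), which is precisely the proof the paper invokes by citing \cite[Theorem 1.2.17]{NicolaRodino2010} without reproducing it. The only quibble is expository: $c$ is already uniquely determined by $a^Wb^W=c^W$ (injectivity of the Weyl quantization on $\mathcal{S}'$), so ``unique modulo Schwartz functions'' pertains to Lemma \ref{lemma asymptotic expansion}, not to $c$ itself; otherwise the heavy lifting you correctly identify (the zone-splitting remainder estimate) is exactly what the cited reference carries out.
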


\begin{proof}
See e.g.\ \cite[Theorem 1.2.17]{NicolaRodino2010}.
\end{proof}


\begin{corollary}[Commutators]\label{Corollary commutator}
Let $a\in S(m_1,g)$, $b\in S(m_2,g)$. Then the commutator $[a^W,b^W]=a^Wb^W-b^Wa^W$ is a pseudodifferential operator with Weyl symbol $a\#b-b\#a\in S(mh,g)$.
\end{corollary}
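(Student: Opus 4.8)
The plan is to deduce the commutator statement directly from the composition theorem (Theorem~\ref{theorem composition}). By that theorem, since $a\in S(m_1,g)$ and $b\in S(m_2,g)$, there exist symbols $c_1:=a\#b\in S(m_1m_2,g)$ and $c_2:=b\#a\in S(m_2m_1,g)=S(m_1m_2,g)$ with $a^Wb^W=c_1^W$ and $b^Wa^W=c_2^W$. Hence $[a^W,b^W]=a^Wb^W-b^Wa^W=(c_1-c_2)^W=(a\#b-b\#a)^W$, so the commutator is the Weyl quantization of the symbol $a\#b-b\#a$, which a~priori lies in $S(m_1m_2,g)$. The content of the corollary is the improvement from $S(m_1m_2,g)$ to the smaller class $S(m_1m_2\,h,g)$, gaining one factor of the Planck function $h$.

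To see this gain, I would invoke the asymptotic expansion~\eqref{composition formula in S(m,g)}, which is available because the Planck function satisfies the strong uncertainty principle (this is the standing hypothesis in the cases $g_0$, $g_1$, $S^m_{1,0}$ of interest, where $h(X)\lesssim(1+|X|)^{-\gamma}$; see Remark~\ref{Remark cases symbol classes of interest}). The expansion gives
\begin{align*}
(a\#b-b\#a)(x,\xi)\sim\sum_{\alpha,\beta}(-1)^{|\beta|}(\alpha!\beta!)^{-1}2^{-|\alpha+\beta|}\left(\partial_\xi^\alpha D_x^\beta a\,\partial_\xi^\beta D_x^\alpha b-\partial_\xi^\alpha D_x^\beta b\,\partial_\xi^\beta D_x^\alpha a\right).
\end{align*}
The key observation is that the leading term, corresponding to $\alpha=\beta=0$, is $ab-ba=0$. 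Therefore the first potentially nonzero contributions come from $|\alpha+\beta|=1$. Each such term is a product of one undifferentiated factor and one factor carrying a single $x$- or $\xi$-derivative; by the definition of the symbol classes (Definition~\ref{Definition symbol classes}) a single $x$-derivative costs a factor $\Phi(X)^{-1}$ and a single $\xi$-derivative costs $\Psi(X)^{-1}$, so in either case the term lies in $S(m_1m_2\Phi^{-1}\Psi^{-1},g)=S(m_1m_2h,g)$, using $h=\Phi^{-1}\Psi^{-1}$ from~\eqref{Planck function}. The remainder after subtracting the (vanishing) zeroth term, namely $a\#b-b\#a$ itself, then lies in $S(m_1m_2h,g)$ by the remainder property in Lemma~\ref{lemma asymptotic expansion} applied with $N=1$ to the sequence obtained from~\eqref{composition formula in S(m,g)}.

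I do not expect a serious obstacle here; the only point requiring a little care is the bookkeeping that the sum in~\eqref{composition formula in S(m,g)}, reorganized by total order $|\alpha+\beta|$, is genuinely an asymptotic series of the type handled by Lemma~\ref{lemma asymptotic expansion}, i.e.\ that the $j$-th group of terms lies in $S(m_1m_2h^{j},g)$ — this is immediate from the same derivative-counting argument, since a term with $|\alpha+\beta|=j$ carries total weight $\Phi^{-|\beta+\alpha|}\Psi^{-|\alpha+\beta|}\lesssim h^{j}$ relative to $m_1m_2$. Once that is noted, the cancellation of the $j=0$ term and the remainder estimate finish the proof. One may also remark that the continuity of $(a,b)\mapsto a\#b-b\#a$ from $S(m_1,g)\times S(m_2,g)$ into $S(m_1m_2h,g)$ follows from the corresponding continuity statement in Theorem~\ref{theorem composition} together with the fact that the seminorms controlling the $S(m_1m_2h,g)$-norm of the difference are, by the expansion, controlled by products of seminorms of $a$ and $b$.
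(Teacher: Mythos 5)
Your proposal is correct and is exactly the argument the paper intends: the corollary is stated without proof as an immediate consequence of Theorem~\ref{theorem composition}, namely that the $\alpha=\beta=0$ terms of the two asymptotic expansions cancel and the remainder lies in $S(m_1m_2h,g)$ (note the paper's ``$S(mh,g)$'' is a typo for $S(m_1m_2h,g)$). Your added care about the strong uncertainty principle being needed for the expansion, and about grouping terms by $|\alpha+\beta|$, is appropriate but does not change the route.
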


\begin{proposition}\label{prop. continuity on S}
Let $a\in S(m,g)$. Then $a^w(x,D)$ is continuous on $\mathcal{S}(\R^n)$. 
Moreover, the map $S(m,g)\times \mathcal{S}(\R^n)\to \mathcal{S}(\R^n)$ is continuous. 
\end{proposition}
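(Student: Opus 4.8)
The statement to prove is Proposition \ref{prop. continuity on S}: continuity of $a^w(x,D)$ on $\mathcal{S}(\R^n)$, and joint continuity of the map $S(m,g)\times\mathcal{S}(\R^n)\to\mathcal{S}(\R^n)$.

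\medskip

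The plan is to reduce the general metric $g$ defined in \eqref{metric Phi Psi} to the standard calculus by exploiting the sublinearity of the weights $\Phi,\Psi$. Since $\Phi(X),\Psi(X)\gtrsim 1$, the estimates defining $S(m,g)$ are \emph{stronger} than those defining a symbol in the H\"ormander class $S^{m'}_{0,0}$ (with polynomial bound $m(X)\lesssim\langle X\rangle^{s}$ coming from temperateness), so it suffices to prove the statement for $a\in S^{m'}_{0,0}$, i.e.\ for symbols satisfying $|\partial_x^\alpha\partial_\xi^\beta a(x,\xi)|\le C_{\alpha,\beta}\langle(x,\xi)\rangle^{s}$, with the relevant seminorms controlled by finitely many of the $\|a\|_{k,S(m,g)}$.

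\medskip

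First I would write out the oscillatory integral in Definition \ref{definition Weyl quantization} and, for $u\in\mathcal{S}(\R^n)$, justify that it converges and defines a smooth function: after the change of variables $y\mapsto x-y$ one integrates $\e^{-\I y\cdot\xi}a(x-\tfrac{y}{2},\xi)u(x-y)$, and the rapid decay of $u$ in $y$ together with the polynomial growth of $a$ makes the $\xi$-integral absolutely convergent once we integrate by parts in $\xi$ using $\langle y\rangle^{-2N}(1-\Delta_\xi)^N\e^{-\I y\cdot\xi}=\e^{-\I y\cdot\xi}$ to gain decay in $y$. Next, to control $\langle x\rangle^N\partial_x^\gamma(a^W u)(x)$ I would differentiate under the integral sign (the Leibniz rule distributes $\partial_x^\gamma$ onto $a$ and onto $u$, each term still of the same form) and use the analogous integration by parts in $y$ via $\langle\xi\rangle^{-2M}(1-\Delta_y)^M\e^{-\I y\cdot\xi}=\e^{-\I y\cdot\xi}$ to convert the factor $\langle x\rangle^N$ — after writing $\langle x\rangle\lesssim\langle x-\tfrac{y}{2}\rangle\langle y\rangle$ and absorbing the $\langle x-\tfrac{y}{2}\rangle$ powers into extra $\xi$-derivatives of $a$ — into decay that, combined with the Schwartz seminorms of $u$, yields a finite bound. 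Each such bound is linear in finitely many seminorms of $a$ and in finitely many Schwartz seminorms of $u$, which is exactly what is needed both for continuity of $a^W$ on $\mathcal{S}(\R^n)$ for fixed $a$ and for the joint continuity of $(a,u)\mapsto a^W u$.

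\medskip

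The main obstacle is purely bookkeeping: organizing the integrations by parts so that the two competing polynomial factors — the growth $\langle(x,\xi)\rangle^{s}$ of the symbol and the weight $\langle x\rangle^N$ one wants on the output — are simultaneously tamed by borrowing decay in $y$ and in $\xi$ from $u$ and from the oscillatory factor, while keeping explicit track of which seminorms $\|a\|_{k,S(m,g)}$ and which Schwartz seminorms $p_j(u)$ appear, so that the estimate is manifestly a continuous bilinear (hence jointly continuous) estimate. Since all of this is standard for $S^{m'}_{0,0}$ and our classes embed into such a class with controlled seminorms, I would simply cite, e.g., \cite[Chapter 1]{NicolaRodino2010} for the model case and indicate the embedding, rather than redo the computation.
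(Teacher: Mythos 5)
Your proposal is correct and is essentially the paper's approach: the paper proves this proposition by citing the standard result (Proposition~1.2.7 in \cite{NicolaRodino2010}), and your sketch — reducing via $\Phi,\Psi\geq 1$ and temperateness of $m$ to symbols all of whose derivatives are $\mathcal{O}(\langle X\rangle^{s})$, then running the usual double integration by parts to get a bilinear seminorm estimate — is exactly the standard argument behind that citation.
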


\begin{proof}
\cite[Propositions 1.2.7
]{NicolaRodino2010}.
\end{proof}

\begin{theorem}[Boundedness on $L^2$]\label{theorem boundedness on L2}
Assume that $a\in S(1,g)$ and that the strong uncertainty principle \eqref{strong uncertainty principle} holds. Then $a^W(x,D)$ is bounded on $L^2(\R^n)$. Moreover, the map $S(m,g)\times L^2(\R^n)\to L^2(\R^n)$ is continuous. 
\end{theorem}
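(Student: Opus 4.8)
The final statement to prove is Theorem~\ref{theorem boundedness on L2} ($L^2$-boundedness of Weyl operators with symbols in $S(1,g)$ under the strong uncertainty principle). The plan is to follow the classical Cordes--Kato--Calder\'on--Vaillancourt scheme, decomposing the symbol into unit-scale pieces adapted to the metric $g$ and summing the resulting operator norms with an almost-orthogonality (Cotlar--Stein) argument.

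First I would reduce to a model situation. Using the strong uncertainty principle $h(X)=\Phi(X)^{-1}\Psi(X)^{-1}\lesssim\langle X\rangle^{-\gamma}$ and the temperateness and slow variation of $\Phi,\Psi$, one constructs a $g$-partition of unity: a family $\{\chi_j\}_{j}$ with $\sum_j\chi_j=1$, each $\chi_j$ supported in a $g$-ball $B_j$ of fixed $g$-radius centered at some $X_j=(x_j,\xi_j)$, and with derivative bounds $|\partial^\alpha_x\partial^\beta_\xi\chi_j|\lesssim\Phi(X_j)^{-|\alpha|}\Psi(X_j)^{-|\beta|}$ uniformly in $j$ (this is standard Weyl--H\"ormander machinery, see \cite{Hoermander2007} or \cite[Section 3]{Doi2005}). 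Writing $a=\sum_j a_j$ with $a_j=\chi_j a\in S(1,g)$ supported in $B_j$, the key point is that each $a_j^W$, after the affine symplectic change of variables normalizing $B_j$ to the unit ball, has a symbol with all derivatives bounded uniformly in $j$; by the Calder\'on--Vaillancourt theorem $\|a_j^W\|_{\mathcal B(L^2)}\lesssim 1$ uniformly in $j$.

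Next I would establish the almost-orthogonality. Using the composition calculus (Theorem~\ref{theorem composition}) one computes the symbol of $(a_j^W)^*a_k^W=\overline{a_j}\#a_k$; because $a_j$ and $a_k$ are supported in $g$-balls $B_j,B_k$, and the asymptotic expansion \eqref{composition formula in S(m,g)} together with the strong uncertainty principle gives rapid decay in the $g$-distance, one obtains $\|(a_j^W)^*a_k^W\|_{\mathcal B(L^2)}+\|a_j^W(a_k^W)^*\|_{\mathcal B(L^2)}\lesssim \big(1+d_g(B_j,B_k)\big)^{-N}$ for every $N$. The temperateness of the metric ensures $\sup_j\sum_k\big(1+d_g(B_j,B_k)\big)^{-N/2}<\infty$ for $N$ large, so the Cotlar--Stein lemma yields $\|a^W\|_{\mathcal B(L^2)}\lesssim \sup_j\|a_j^W\|_{\mathcal B(L^2)}\lesssim \|a\|_{k,S(1,g)}$ for some finite $k$ depending only on $n$ and $\gamma$. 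The continuity of the map $S(1,g)\times L^2\to L^2$ then follows since the bound is by a fixed seminorm of $a$, and the bilinear map is clearly separately continuous.

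The main obstacle is purely technical bookkeeping: constructing the $g$-partition of unity with uniform symbol bounds and, more delicately, extracting the quantitative off-diagonal decay of the composed symbols in terms of a genuine metric distance $d_g(B_j,B_k)$ rather than the naive Euclidean distance. This requires the geodesic distance associated to $g$ and the fact (from Weyl--H\"ormander theory) that $h$ satisfying the strong uncertainty principle makes $g$ ``$\sigma$-temperate'', so that the Moyal product is well-behaved and the number of balls within a given $g$-distance grows only polynomially. Since this machinery is entirely standard and is exactly what is developed in \cite[Section 3]{Doi2005} and \cite{Hoermander2007}, in the write-up I would simply cite those references for the detailed estimates and only indicate the Cotlar--Stein assembly, as the paper does for the other calculus results in this appendix.
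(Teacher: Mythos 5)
Your argument is correct in outline, but it takes a substantially heavier route than the one the paper (via the citation to \cite[Theorem 1.4.1]{NicolaRodino2010}) relies on, and in this setting the heavy machinery is unnecessary. The key observation is that the weights $\Phi,\Psi$ in Definition \ref{Definition symbol classes} are \emph{sublinear}, so by \eqref{sublinear weight} they satisfy $\Phi\geq 1$ and $\Psi\geq 1$; hence $\Phi(X)^{-|\alpha|}\Psi(X)^{-|\beta|}\leq 1$ and every $a\in S(1,g)$ has all derivatives bounded, i.e.\ $S(1,g)\subset S^0_{0,0}$. The Calder\'on--Vaillancourt theorem for the Weyl quantization then gives $\|a^W\|_{\mathcal B(L^2)}\leq C\|a\|_{k,S(1,g)}$ directly, with $k$ depending only on $n$, and the continuity statement follows since the operator norm is controlled by a fixed seminorm. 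This is the same inclusion trick the paper uses for $L^p$-boundedness in Corollary \ref{Corollary Lp boundedness}, and it makes no use of the strong uncertainty principle. Your metric-adapted partition of unity plus confinement estimates plus Cotlar--Stein is the proof of the \emph{general} Weyl--H\"ormander $L^2$ theorem (where $\Phi$ or $\Psi$ may be small and no global derivative bounds are available); it is valid here, but it front-loads the $\sigma$-temperateness and off-diagonal decay machinery that the split, bounded-below metrics of this paper let you avoid entirely. Two small cautions on your sketch if you do carry it out: the strong uncertainty principle is not what makes $g$ $\sigma$-temperate (that is a separate hypothesis on the metric, automatic here for the split metrics with temperate weights), and the rapid off-diagonal decay of $\overline{a_j}\#a_k$ comes from integration by parts against the separation of the supports, not from the asymptotic expansion \eqref{composition formula in S(m,g)}, whose remainders only gain powers of $h$.
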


\begin{proof}
See e.g.\ \cite[Theorem 1.4.1 and Remark 1.2.6]{NicolaRodino2010}.
\end{proof}

\begin{corollary}\label{corollary boundedness on L2}
If $g=g_0$ or $g=g_1$ and $a\in S(1,g)$. Then $a^W$ is $L^2$-bounded. 
\end{corollary}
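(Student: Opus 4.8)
The plan is to reduce Corollary~\ref{corollary boundedness on L2} to Theorem~\ref{theorem boundedness on L2} by checking that both metrics $g_0$ and $g_1$ satisfy the hypotheses required there, namely that the associated weights $\Phi,\Psi$ are sublinear and temperate, and that the Planck function $h=\Phi^{-1}\Psi^{-1}$ satisfies the strong uncertainty principle \eqref{strong uncertainty principle}. Since the class $S(1,g)$ uses the weight $m\equiv 1$, which is trivially temperate, the only thing to verify is the structure of the metric itself.

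First I would treat $g=g_0=\rd x^2+\langle X\rangle^{-2}\rd\xi^2$. By Remark~\ref{Remark cases symbol classes of interest}(1) the weights are $\Phi(X)=1$ and $\Psi(X)=\langle X\rangle=(1+|x|^2+|\xi|^2)^{1/2}$. Sublinearity \eqref{sublinear weight} is clear: $1\leq\langle X\rangle\leq 1+|X|$. For temperateness one uses the elementary inequality $\langle X+Y\rangle\leq\langle X\rangle(1+|Y|)$ (or $\langle X+Y\rangle\lesssim\langle X\rangle\langle Y\rangle$), so $\Psi$ is temperate with $s=1$; $\Phi\equiv 1$ is temperate as well. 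The Planck function is $h(X)=\Psi(X)^{-1}=\langle X\rangle^{-1}\lesssim(1+|X|)^{-1}$, so \eqref{strong uncertainty principle} holds with $\gamma=1$. Hence Theorem~\ref{theorem boundedness on L2} applies and every $a\in S(1,g_0)$ has $L^2$-bounded Weyl quantization.

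Next I would treat $g=g_1=\langle x\rangle^{-2}\rd x^2+\langle\xi\rangle^{-2}\rd\xi^2$, where by Remark~\ref{Remark cases symbol classes of interest}(2) we have $\Phi(X)=\langle x\rangle$, $\Psi(X)=\langle\xi\rangle$. Both are sublinear since $1\leq\langle x\rangle\leq 1+|x|\leq 1+|X|$ and likewise for $\langle\xi\rangle$. Temperateness follows from $\langle x+y\rangle\leq\langle x\rangle(1+|y|)\leq\langle x\rangle(1+|Y|)$, and similarly for $\langle\xi\rangle$, so again $s=1$ works. The Planck function here is $h(X)=\langle x\rangle^{-1}\langle\xi\rangle^{-1}$, and since $\langle x\rangle\langle\xi\rangle\gtrsim(\langle x\rangle^2+\langle\xi\rangle^2)^{1/2}\gtrsim\langle X\rangle\gtrsim 1+|X|$ (up to constants, by the AM--GM inequality $ab\geq\frac12(a^2+b^2)^{1/2}\cdot\min(a,b)$ — more simply, $2\langle x\rangle\langle\xi\rangle\geq\langle x\rangle^2+\langle\xi\rangle^2-(\langle x\rangle-\langle\xi\rangle)^2$, and one checks directly $\langle x\rangle\langle\xi\rangle\geq\langle X\rangle$), we get $h(X)\lesssim(1+|X|)^{-1}$, so \eqref{strong uncertainty principle} holds with $\gamma=1$. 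Thus Theorem~\ref{theorem boundedness on L2} again applies, completing the proof.

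There is essentially no obstacle here; the only mild point of care is the verification of the strong uncertainty principle for $g_1$, i.e.\ the pointwise bound $\langle x\rangle\langle\xi\rangle\gtrsim\langle X\rangle$, which is a two-line elementary estimate (for instance, $\langle x\rangle\langle\xi\rangle\geq\max(\langle x\rangle,\langle\xi\rangle)\geq\tfrac{1}{\sqrt2}(1+|x|^2+|\xi|^2)^{1/2}=\tfrac{1}{\sqrt2}\langle X\rangle$, using $\langle x\rangle,\langle\xi\rangle\geq 1$). Everything else is a direct citation of Theorem~\ref{theorem boundedness on L2} together with the identification of weights already recorded in Remark~\ref{Remark cases symbol classes of interest}.
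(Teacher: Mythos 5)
Your proposal is correct and is essentially the paper's own argument: the paper's proof consists precisely of reducing to Theorem~\ref{theorem boundedness on L2} by verifying the strong uncertainty principle for $g_0$ and $g_1$ via Remark~\ref{Remark cases symbol classes of interest}, which you carry out in detail (your final elementary bound $\langle x\rangle\langle\xi\rangle\geq\max(\langle x\rangle,\langle\xi\rangle)\geq\tfrac{1}{\sqrt2}\langle X\rangle$ is the right way to handle $g_1$, and the earlier garbled AM--GM parenthetical is harmless).
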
 

\begin{proof}
It is enough to verify that the strong uncertainty principle \eqref{strong uncertainty principle} holds in these cases, see Remark \ref{Remark cases symbol classes of interest}.
\end{proof}

\begin{theorem}[Sharp G\aa rding inequality]\label{theorem sharp Garding} Let $a\in S(h^{-1},g)$ and $a(X)\geq 0$ for all $X\in\R^{2n}$. Then there exists $C>0$ such that
\begin{align*}
\langle u,a^W(x,D)u\rangle_{L^2}\geq -C\|u\|^2_{L^2},\quad u\in\mathcal{S}(\R^n).
\end{align*}
\end{theorem}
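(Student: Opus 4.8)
The plan is to decompose $a^W = A_+ + R$, where $A_+$ is a non-negative operator and $R\in\Op^W(S(1,g))$. The latter is $L^2$-bounded by Theorem~\ref{theorem boundedness on L2} (the strong uncertainty principle~\eqref{strong uncertainty principle} holds in all cases of interest, cf.\ Remark~\ref{Remark cases symbol classes of interest}), so that for $u\in\mathcal S(\R^n)$
\[
\langle u,a^Wu\rangle=\langle u,A_+u\rangle+\langle u,Ru\rangle\geq-\|R\|_{\mathcal B(L^2)}\,\|u\|_{L^2}^2,
\]
which is the assertion. The operator $A_+$ will be the Friedrichs symmetrization (Wick, anti-Wick quantization) of $a$, built from a family of Gaussian coherent states adapted to the metric $g$.

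Concretely, for $Y=(q,p)\in\R^{2n}$ let $\phi_Y\in\mathcal S(\R^n)$ be the $L^2$-normalized Gaussian wave packet centred at $q$ in position and at $p$ in frequency, with position width $\sim(\Phi(Y)/\Psi(Y))^{1/2}$ and frequency width $\sim(\Psi(Y)/\Phi(Y))^{1/2}$, i.e.\ a minimal-uncertainty state carrying the same anisotropy as $g_Y$; let $\Pi_Y=\langle\,\cdot\,,\phi_Y\rangle\,\phi_Y$ and put $A_+:=c_n\int_{\R^{2n}}a(Y)\,\Pi_Y\,\rd Y$ for a suitable positive normalization constant $c_n$. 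For $u\in\mathcal S(\R^n)$ we have $\langle u,A_+u\rangle=c_n\int a(Y)\,|\langle u,\phi_Y\rangle|^2\,\rd Y\geq0$, the integral being finite because $|\langle u,\phi_Y\rangle|$ decays rapidly in $Y$ while $a$ grows at most polynomially. The Weyl symbol of $\Pi_Y$ is an explicit Gaussian bump centred at $Y$, with $x$-covariance $\sim\Phi(Y)/\Psi(Y)$ and $\xi$-covariance $\sim\Psi(Y)/\Phi(Y)$, so the Weyl symbol of $A_+$ is $b(X)=c_n\int a(Y)\,w_Y(X)\,\rd Y$. Taylor expanding $a$ about $X$ inside the integral, the first-order term vanishes by symmetry to leading order, the zeroth-order term reproduces $a(X)$ up to a defect governed by the $Y$-dependence of the width, and the second-order term equals, modulo lower-order errors,
\[
\tfrac{\Phi(X)}{\Psi(X)}\,\Delta_x a(X)+\tfrac{\Psi(X)}{\Phi(X)}\,\Delta_\xi a(X).
\]
Since $a\in S(h^{-1},g)$ we have $\Delta_x a\in S(h^{-1}\Phi^{-2},g)$ and $\Delta_\xi a\in S(h^{-1}\Psi^{-2},g)$, hence each summand lies in $S(h^{-1}\Phi^{-1}\Psi^{-1},g)=S(h^{-1}h,g)=S(1,g)$ because $h=\Phi^{-1}\Psi^{-1}$; the remaining terms of the expansion are even smaller. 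Thus $R:=a^W-A_+=(b-a)^W$ has symbol in $S(1,g)$, completing the decomposition.

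The main obstacle is the rigorous implementation of this Wick calculus with \emph{base-point-dependent} coherent-state widths: one must control the defect in the resolution of the identity $\int\Pi_Y\,\rd Y$, estimate the error incurred by replacing $w_Y$ by a Gaussian with $X$-adapted covariance inside the integral, and handle the Gaussian tails together with the Taylor remainder. All of this rests on $g$ being slowly varying (so that $g_X$ and $g_Y$ are uniformly equivalent on $g$-balls) and temperate, which hold for $g_0$ and $g_1$. This is precisely the Weyl--H\"ormander version of the sharp G\aa rding inequality, and in the generality needed here it can be imported from the references cited for the calculus; the computation above is the heuristic that makes the gain of one factor of $h$ transparent.
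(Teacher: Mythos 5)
Your proposal is correct and coincides with the paper's treatment: the paper simply cites \cite[Theorem 1.7.15]{NicolaRodino2010}, and the proof there is exactly the Friedrichs/anti--Wick symmetrization with metric-adapted Gaussians that you outline, with the gain of one factor of $h$ coming from the second-order term of the Taylor expansion as you compute. Your sketch correctly identifies both the mechanism and the technical points (slow variation and temperance of $g$) that the cited reference handles in detail.
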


\begin{proof}
See e.g.\ \cite[Theorem 1.7.15]{NicolaRodino2010}
\end{proof}

\begin{theorem}[Boundedness on $L^p$]\label{theorem boundedness on Lp}
Let $a\in S^0_{1,0}$. Then $a^W(x,D)$ is bounded on $L^p(\R^n)$ for $1<p<\infty$. Moreover, there exists $k\in\N$ and $C_{q}>0$ such that $\|a^W\|_{\mathcal{B}(L^p)}\leq C_{q}\|a\|_{k,S(1,g)}$. 
\end{theorem}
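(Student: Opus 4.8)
The statement is the classical $L^p$-boundedness of pseudodifferential operators of order zero, so the plan is to derive it from Calderón--Zygmund theory (alternatively one may simply cite a standard reference). I begin with two preliminary reductions. First, the Weyl calculus developed above does not apply directly: for the metric attached to $S^0_{1,0}$ (case (3) of Remark \ref{Remark cases symbol classes of interest}) the Planck function $h(X)=\langle\xi\rangle^{-1}$ violates the strong uncertainty principle \eqref{strong uncertainty principle}, so the $L^2$-bound cannot be taken from Theorem \ref{theorem boundedness on L2} and must instead be imported from the Calderón--Vaillancourt theorem, giving $\|a^W\|_{\mathcal{B}(L^2)}\lesssim\|a\|_{k_0,S(1,g)}$ with $k_0$ depending only on $n$. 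Second, I pass from the Weyl quantization $a^W$ to the Kohn--Nirenberg quantization $\tilde a(x,D)$ of a modified symbol $\tilde a\in S^0_{1,0}$ representing the same operator; the symbol map is continuous on $S^0_{1,0}$, so all seminorms stay under control, and the kernel acquires the convenient oscillatory-integral form $K(x,y)=(2\pi)^{-n}\int_{\R^n}\e^{\I(x-y)\cdot\xi}\tilde a(x,\xi)\rd\xi$.

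Next I would establish the Calderón--Zygmund kernel estimates. Decompose $\tilde a=\sum_{j\geq 0}\tilde a_j$ with $\tilde a_j$ supported where $|\xi|\sim 2^j$ (and $|\xi|\lesssim 1$ for $j=0$); the corresponding kernel pieces $K_j$ satisfy, for any $N\in\N$, $|K_j(x,y)|\lesssim\min(2^{jn},|x-y|^{-N}2^{j(n-N)})\|\tilde a\|_{N,S(1,g)}$, the first bound being trivial and the second obtained by integrating by parts $N$ times in $\xi$ against $\e^{\I(x-y)\cdot\xi}$ and using $|\partial_\xi^N\tilde a_j|\lesssim 2^{-jN}$ on a shell of measure $\sim 2^{jn}$. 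Fixing $N>n$ and balancing the dyadic sum at $2^j\sim|x-y|^{-1}$ gives $|K(x,y)|\lesssim|x-y|^{-n}$ for $x\neq y$; differentiating under the integral (which costs a factor $\xi$, i.e.\ raises the order by one) the same argument yields $|\nabla_xK(x,y)|+|\nabla_yK(x,y)|\lesssim|x-y|^{-n-1}$, with all constants controlled by finitely many seminorms of $\tilde a$, hence of $a$. In particular the Hörmander integral condition $\int_{|x-y|>2|y-y'|}|K(x,y)-K(x,y')|\rd x\lesssim 1$ holds.

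Finally I invoke the Calderón--Zygmund theorem: an $L^2$-bounded operator whose kernel satisfies the preceding condition is of weak type $(1,1)$, with weak-$(1,1)$ norm bounded by the $L^2$-norm plus the kernel constants; by the Marcinkiewicz interpolation theorem it is then bounded on $L^p$ for $1<p\leq 2$. For $2\leq p<\infty$ one uses that $(a^W)^*=\overline{a}^W$ by Proposition \ref{Proposition selfadjointness of Weyl quantization}, with $\overline{a}\in S^0_{1,0}$ having the same seminorms as $a$, so the previous case applied to $\overline{a}^W$ on $L^{p'}$ gives the $L^p$-bound for $a^W$ by duality. Tracking the constants through the interpolation and duality steps yields the quantitative bound with $k$ and $C_q$ depending only on $n$ and $p$. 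I expect the kernel estimate to be the main technical point --- specifically, organizing the dyadic integration by parts so that the dependence on only finitely many seminorms of $a$ is transparent --- together with the preliminary observation that the $L^2$-theory of the present calculus is inapplicable in this symbol class and must be replaced by the Calderón--Vaillancourt bound.
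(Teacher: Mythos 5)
Your proposal is correct and is essentially the paper's proof written out in full: the paper simply cites \cite[Proposition VI.4]{Stein1993}, whose argument is precisely the Calder\'on--Zygmund kernel estimate via dyadic decomposition plus interpolation and duality that you reconstruct, and the paper makes the same preliminary reduction from the Weyl to the standard quantization. Your observation that the $L^2$ input must come from outside the calculus of Theorem \ref{theorem boundedness on L2} (since the strong uncertainty principle fails for $S^0_{1,0}$) is a correct and worthwhile clarification, but does not change the route.
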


\begin{proof}
See \cite[Proposition VI.4]{Stein1993}. Notice that the choice of quantization is immaterial for this result since there exists a symbol $a_L\in S^{1}_{1,0}$ such that $a^W(x,D)=a_L(x,D)$ where $a_L(x,D)$ is the standard (or left) quantization of $a$, see e.g.\ \cite[Remark 1.2.6]{NicolaRodino2010}. The second claim can be verified by inspection of the proof of \cite[Proposition VI.4]{Stein1993}.
\end{proof}

\begin{corollary}\label{Corollary Lp boundedness}
Let $g=g_0$ or $g=g_1$, and let $a\in S(1,g)$. Then $a^W(x,D)$ is bounded on $L^p(\R^n)$ for $1<p<\infty$. Moreover, there exists $k\in\N$ and $C_{q}>0$ such that $\|a^W\|_{\mathcal{B}(L^p)}\leq C_{q}\|a\|_{k,S(1,g)}$. 
\end{corollary}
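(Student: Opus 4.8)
The plan is to reduce Corollary \ref{Corollary Lp boundedness} to the already-stated Theorem \ref{theorem boundedness on Lp} for standard symbols $S^0_{1,0}$, by verifying that $S(1,g_0)$ and $S(1,g_1)$ are both contained in $S^0_{1,0}$, with continuous inclusion of the Fréchet topologies. Recall (Remark \ref{Remark cases symbol classes of interest}) that the standard class $S^0_{1,0}$ corresponds to the metric with $\Phi(X)=1$, $\Psi(X)=\langle\xi\rangle$; so a symbol $a$ lies in $S^0_{1,0}$ exactly when
\begin{align*}
|\partial_x^{\alpha}\partial_{\xi}^{\beta}a(X)|\leq C_{\alpha,\beta}\langle\xi\rangle^{-|\beta|},\quad X\in\R^{2n},
\end{align*}
for all multi-indices $\alpha,\beta$.

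First I would treat $g=g_1$, where $\Phi(X)=\langle x\rangle$, $\Psi(X)=\langle\xi\rangle$. If $a\in S(1,g_1)$ then $|\partial_x^{\alpha}\partial_{\xi}^{\beta}a(X)|\leq C_{\alpha,\beta}\langle x\rangle^{-|\alpha|}\langle\xi\rangle^{-|\beta|}\leq C_{\alpha,\beta}\langle\xi\rangle^{-|\beta|}$ since $\langle x\rangle^{-|\alpha|}\leq 1$. Hence $a\in S^0_{1,0}$, and the relevant seminorm $\|a\|_{k,S^0_{1,0}}$ is bounded by $\|a\|_{k,S(1,g_1)}$, which is exactly the continuity of the inclusion. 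Then I would treat $g=g_0$, where $\Phi(X)=1$, $\Psi(X)=\langle X\rangle$. If $a\in S(1,g_0)$ then $|\partial_x^{\alpha}\partial_{\xi}^{\beta}a(X)|\leq C_{\alpha,\beta}\langle X\rangle^{-|\beta|}$; here one uses that $\langle\xi\rangle\leq\langle X\rangle=\langle(x,\xi)\rangle$, so $\langle X\rangle^{-|\beta|}\leq\langle\xi\rangle^{-|\beta|}$, giving again $a\in S^0_{1,0}$ with $\|a\|_{k,S^0_{1,0}}\lesssim\|a\|_{k,S(1,g_0)}$.

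With these two inclusions in hand, the corollary follows immediately: given $a\in S(1,g)$ with $g=g_0$ or $g=g_1$, we have $a\in S^0_{1,0}$, so by Theorem \ref{theorem boundedness on Lp}, $a^W(x,D)$ is bounded on $L^p(\R^n)$ for $1<p<\infty$, with $\|a^W\|_{\mathcal{B}(L^p)}\leq C_q\|a\|_{k,S(1,g)}$ after absorbing the (harmless, constant) factor from the inclusion of seminorms. One small point to check is that $\|\cdot\|_{k,S(1,g)}$ in the statement refers to the seminorm of Definition \ref{Definition symbol classes} with $m\equiv 1$ and the appropriate $\Phi,\Psi$; since $\Phi(X)^{|\alpha|}\Psi(X)^{|\beta|}\geq 1$ in both cases, the bound passes through with the same index $k$.

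I do not expect any genuine obstacle here: the entire content is the elementary estimates $\langle x\rangle^{-1}\leq 1$ and $\langle\xi\rangle\leq\langle(x,\xi)\rangle$, which witness that both $g_0$ and $g_1$ are "coarser than or equal to" the standard Hörmander metric in the variables that matter for $L^p$-boundedness. If anything, the only thing requiring mild care is the quantization: Theorem \ref{theorem boundedness on Lp} is phrased for the Weyl quantization already, but even if one preferred to cite a left-quantization result, the equivalence of quantizations within $S^0_{1,0}$ (noted in the proof of Theorem \ref{theorem boundedness on Lp}) makes this a non-issue.
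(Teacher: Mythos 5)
Your proposal is correct and is essentially the paper's own argument: the paper's proof consists of the single line that the corollary follows from the inclusion $S(1,g)\subset S^{0}_{1,0}$ (misprinted there as $S^{1}_{1,0}$), which you have simply verified in detail via $\langle x\rangle^{-|\alpha|}\leq 1$ and $\langle\xi\rangle\leq\langle X\rangle$ before invoking Theorem \ref{theorem boundedness on Lp}.
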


\begin{proof}
This follows immediately from the inclusion $S(1,g)\subset S^1_{1,0}$. 
\end{proof}

We also need the following special case of the symbol classes used in \cite{Doi2005}.

\begin{definition}
Let $\Phi,\Psi$ be as in Definition \ref{Definition symbol classes} and let $\Phi_0\geq \Phi$ be another temperate weight. We define, for $N\in\N$,
\begin{align*}
S_N(m,\Phi_0,g_0)=\{a\in S(m,g):\partial_x^{\alpha}a\in S(\Phi_0^{-|\alpha|}m,g)\mbox{ for all }|\alpha|\leq N\}.
\end{align*} 
Moreover, set $h_0:=\Phi_0^{-1}\Psi^{-1}$.
\end{definition}

\begin{remark}\label{remark Doi's class}
The subsequent lemma will be used with $g=g_0$ and $\Phi_0(X)=\langle x\rangle$ or with $\Phi_0(X)=\Phi(X)=1$.
Note that if $\Phi_0=\Phi$, then $S_N(m,\Phi_0,g)=S(m,g)$ and $h_0=h$.
\end{remark}

\begin{lemma}\label{Lemma 3.4 Doi}
Let $a_1\in S_1(m_1,\Phi_0(X),g)$, $a_2\in S_1(m_2,\Phi_0(X),g)$. Then
\begin{enumerate}
\item $a_1\#a_2\in S_1(m_1m_2,\Phi_0,g)$;
\item $\{a_1,a_2\}\in S(h_0 m_1m_2,g)$;
\item $a_1\#a_2-a_1a_2-\{a_1,a_2\}/(2\I)\in S(hh_0m_1m_2,g)$;
\item $a_1\#a_2-a_2\#a_1-\{a_1,a_2\}/\I\in S(h^2h_0m_1m_2,g)$.
\end{enumerate}
\end{lemma}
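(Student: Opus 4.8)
\textbf{Proof plan for Lemma \ref{Lemma 3.4 Doi}.}

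The plan is to derive all four statements from the composition formula \eqref{composition formula in S(m,g)} of Theorem \ref{theorem composition}, keeping careful track of how each $x$-derivative that appears in the asymptotic expansion improves the weight by a factor of $\Phi_0^{-1}$ rather than merely $\Phi^{-1}$. First I would recall that $S_1(m,\Phi_0,g)$ consists of those $a\in S(m,g)$ with $\partial_x^\alpha a\in S(\Phi_0^{-|\alpha|}m,g)$ for $|\alpha|\le 1$, and note the elementary fact that for $a_i\in S_1(m_i,\Phi_0,g)$ the term in the composition expansion indexed by $(\alpha,\beta)$, namely $\partial_\xi^\alpha D_x^\beta a_1\cdot \partial_\xi^\beta D_x^\alpha a_2$, gains $\Psi^{-|\alpha|-|\beta|}$ from the $\xi$-derivatives and, when $|\alpha|+|\beta|\ge 1$, at least one $x$-derivative falls on one of the factors, so one picks up a factor $\Phi_0^{-1}$ there (and $\Phi^{-1}$ for any further $x$-derivatives on the same factor). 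Since $h=\Phi^{-1}\Psi^{-1}$ and $h_0=\Phi_0^{-1}\Psi^{-1}\le h$, each such term lies in $S(h_0\,hh'\,m_1m_2,g)$ for an appropriate extra smallness weight $hh'$ coming from the remaining derivatives; summing the series via Lemma \ref{lemma asymptotic expansion} then yields the asserted membership. One must of course check that the strong uncertainty principle holds in the relevant cases ($g_0$ with $\Phi_0=\langle x\rangle$ or $\Phi_0=\Phi=1$), which is immediate from Remark \ref{Remark cases symbol classes of interest}, so the asymptotic expansion is meaningful.

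For (2), the Poisson bracket $\{a_1,a_2\}=\partial_\xi a_1\cdot\partial_x a_2-\partial_x a_1\cdot\partial_\xi a_2$ is exactly (twice the imaginary part of) the first-order term of the expansion; each summand has one $\xi$-derivative (giving $\Psi^{-1}$) and one $x$-derivative landing on a factor in $S_1(\cdot,\Phi_0,g)$ (giving $\Phi_0^{-1}$), hence $\{a_1,a_2\}\in S(\Phi_0^{-1}\Psi^{-1}m_1m_2,g)=S(h_0m_1m_2,g)$. For (1), the leading term is $a_1a_2\in S(m_1m_2,g)$ and lies in $S_1(m_1m_2,\Phi_0,g)$ by the Leibniz rule (an $x$-derivative of the product hits one factor, improving by $\Phi_0^{-1}$); the remaining terms are, by the bound just described, in $S(h_0hm_1m_2,g)\subset S(m_1m_2,g)$, and one checks their $x$-derivatives improve likewise, so $a_1\#a_2\in S_1(m_1m_2,\Phi_0,g)$. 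For (3) one subtracts the two leading terms $a_1a_2$ and $\{a_1,a_2\}/(2\I)$ and is left with the tail of the expansion starting at order two; every such term carries at least two $\xi$-derivatives, so at least $\Psi^{-2}$, and at least one $x$-derivative on an $S_1$-factor, giving $\Phi_0^{-1}\Phi^{-1}$, whence the remainder lies in $S(h\,h_0\,m_1m_2,g)$ as claimed — here one should organize the bookkeeping by splitting the order-two terms according to whether the two $x$-derivatives fall on the same factor or on different factors. Statement (4) follows by applying (3) to both $a_1\#a_2$ and $a_2\#a_1$ and subtracting: the $a_1a_2$ terms cancel, the $\{a_1,a_2\}/(2\I)$ and $\{a_2,a_1\}/(2\I)=-\{a_1,a_2\}/(2\I)$ terms combine to $\{a_1,a_2\}/\I$, and the two remainders, each in $S(hh_0m_1m_2,g)$, do not obviously give the claimed $S(h^2h_0m_1m_2,g)$ — so one must instead compare the order-two terms of the two expansions directly and observe the extra cancellation: the symmetric (real) part of the order-two term is the same for $a_1\#a_2$ and $a_2\#a_1$, so only the antisymmetric order-three-and-higher contributions survive the subtraction, which supplies the additional factor of $h$.

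The main obstacle I anticipate is precisely this last point: getting the sharp power $h^2h_0$ in (4) rather than the naive $hh_0$ requires exploiting the parity structure of the Weyl composition formula — namely that $a_1\#a_2-a_2\#a_1$ is an odd function of the expansion parameter, so its expansion contains only odd-order terms, and the order-one term is exactly $\{a_1,a_2\}/\I$ while the next surviving term is of order three. Thus after subtracting $\{a_1,a_2\}/\I$ the remainder begins at order three, contributing $\Psi^{-3}$ from $\xi$-derivatives and $\Phi_0^{-1}\Phi^{-2}$ from $x$-derivatives, i.e.\ lies in $S(h^2h_0m_1m_2,g)$. All of this is standard Weyl-calculus bookkeeping once the parity observation is in place; the routine verifications of the weight gains for each individual term I would not spell out in full.
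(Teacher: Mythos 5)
The paper does not actually prove this lemma: its ``proof'' is a one-line citation of \cite[Lemma 3.4]{Doi2005}. Your proposal is therefore a genuinely different, self-contained route, and its two key ideas are correct. The term-by-term bookkeeping is right: in the expansion \eqref{composition formula in S(m,g)} the term indexed by $(\alpha,\beta)$ carries $|\alpha|+|\beta|$ derivatives in $\xi$ (gaining $\Psi^{-|\alpha|-|\beta|}$) and $|\alpha|+|\beta|$ derivatives in $x$, of which the first one landing on each $S_1$-factor gains $\Phi_0^{-1}$ and every further one on the same factor gains only $\Phi^{-1}$ (since $N=1$, one has $\partial_x^2a_i\in S(\Phi_0^{-1}\Phi^{-1}m_i,g)$); combined with $\Phi_0\ge\Phi$ this gives exactly $h_0m_1m_2$ at order one, $hh_0m_1m_2$ at order two and $h^2h_0m_1m_2$ at order three. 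The parity observation — that $a_1\#a_2-a_2\#a_1$ contains only odd-order terms because relabelling $(\alpha,\beta)\mapsto(\beta,\alpha)$ turns the sign $(-1)^{|\beta|}$ into $(-1)^{|\alpha|}$ — is precisely what upgrades the naive $hh_0$ to $h^2h_0$ in (4), and you were right to insist on it. For (1) a cleaner argument than term-by-term inspection is the identity $\partial_{x_j}(a_1\#a_2)=(\partial_{x_j}a_1)\#a_2+a_1\#(\partial_{x_j}a_2)$ together with Theorem \ref{theorem composition}.

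There is, however, one step you treat as routine that the quoted tools do not cover: Theorem \ref{theorem composition} (and Lemma \ref{lemma asymptotic expansion}) control the remainder after order $N$ only in $S(h^{N+1}m_1m_2,g)$, whereas (2)--(4) assert membership in the \emph{smaller} classes $S(h^kh_0m_1m_2,g)$ (smaller because $h_0=\Phi_0^{-1}\Psi^{-1}\le\Phi^{-1}\Psi^{-1}=h$). So ``summing the series'' does not by itself give the refined weight for the tail of the expansion. Two ways to close this: either reprove the composition asymptotics inside the $S_1$-calculus, tracking $\Phi_0$ through the remainder integral (this is in effect what \cite{Doi2005} does); or observe that in the cases actually used here one has $\Phi_0\lesssim\Phi^2\Psi$, i.e.\ $h^2\lesssim h_0$ (for $g=g_0$, $\Phi_0=\langle x\rangle\le\langle X\rangle=\Phi^2\Psi$), so that the crude remainder one order further out satisfies $S(h^{N+2}m_1m_2,g)\subset S(h^{N}h_0m_1m_2,g)$ and absorbs the tail after you have estimated the explicit terms of orders $N$ and $N+1$ by hand. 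With either fix your argument is complete.
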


\begin{proof}
This follows from \cite[Lemma 3.4]{Doi2005}.
\end{proof}


\begin{proposition}\label{Proposition membership to symbol classes}
Let $g_0$, $g_1$ be defined by \eqref{metrics g0 g1}. Then the following hold.
\begin{enumerate}
\item $\langle x\rangle^s\in S(\langle x\rangle^s,g_j)$ for $j=0,1$ and $s\in\R$;
\item $e_{s}\in S(\langle X\rangle^s,g_j)$ for $j=0,1$ and $s\in\R$;
\item $\langle \xi\rangle^{s}\in S(\langle x\rangle^{s},g_1)$ for $s\in \R$;
\item $\lambda\in S(1,g_1)\cap S_1(1,\langle x\rangle,g_0)$;
\item $V_0\in S_1(\langle x\rangle^2,\langle x\rangle,g_0)$;
\item $A_0^2\in S_1(\langle x\rangle^2,\langle x\rangle,g_0)$;
\item $A_0(x)\cdot\xi\in S_1(\langle x\rangle\langle X\rangle,\langle x\rangle,g_0)$;
\item $P_0(x,\xi)\in S_1(\langle X\rangle^2,\langle x\rangle,g_0)$.
\end{enumerate}
\end{proposition}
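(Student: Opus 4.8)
The plan is to check each of the eight inclusions directly against Definition~\ref{Definition symbol classes} and, for the refined classes, against the definition of $S_N(m,\Phi_0,g_0)$, using nothing beyond the Leibniz and chain rules together with the elementary comparisons $\langle X\rangle\sim\langle x\rangle+\langle\xi\rangle$, so that $\langle X\rangle^{-1}\le\langle x\rangle^{-1}$, $\langle X\rangle^{-1}\le\langle\xi\rangle^{-1}$ and $\langle X\rangle\lesssim\langle x\rangle\langle\xi\rangle$. Items (1)--(3) involve only pure power weights and are immediate from $|\partial_x^\alpha\langle x\rangle^s|\lesssim\langle x\rangle^{s-|\alpha|}$, $|\partial_\xi^\beta\langle\xi\rangle^s|\lesssim\langle\xi\rangle^{s-|\beta|}$ and $|\partial_x^\alpha\partial_\xi^\beta\langle X\rangle^s|\lesssim\langle X\rangle^{s-|\alpha|-|\beta|}$, combined with the above monotonicity of $\langle X\rangle$ (which turns a loss $\langle X\rangle^{-|\alpha|-|\beta|}$ into the separate losses $\langle x\rangle^{-|\alpha|}$ and $\langle\xi\rangle^{-|\beta|}$ required by the metric $g_1$, and into $\langle X\rangle^{-|\beta|}$ as required by $g_0$).

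The only real content in items (5)--(7) is that Assumptions (A1), (A2) control $\partial_x^\alpha A_0$ only for $|\alpha|\ge1$ and $\partial_x^\alpha V_0$ only for $|\alpha|\ge2$. I would first recover the missing low-order bounds by the fundamental theorem of calculus applied to \eqref{assumptions on A_0}, \eqref{assumptions on V_0}, obtaining $|A_0(x)|\lesssim\langle x\rangle$, $|\nabla V_0(x)|\lesssim\langle x\rangle$ and $|V_0(x)|\lesssim\langle x\rangle^2$ (the constants now also involving the fixed numbers $A_0(0)$, $\nabla V_0(0)$, $V_0(0)$, which is harmless). With these in hand, (5) says that $\partial_x^\gamma V_0$ is $O(\langle x\rangle^2)$ for every $\gamma$ and improves to $O(\langle x\rangle^{2-|\gamma|})$ when $|\gamma|\le1$ (the latter being the extra requirement of $S_1(\,\cdot\,,\langle x\rangle,g_0)$; for $|\gamma|\ge2$ the derivative is just bounded). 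Items (6) and (7) are the same computation for the products $|A_0|^2$ and $A_0(x)\cdot\xi$: expanding by Leibniz, each factor $\partial^{\gamma'}A_0$ contributes $\langle x\rangle$ when $\gamma'=0$ and $O(1)$ otherwise, while a factor $\xi$ or a $\partial_{\xi_j}$ hitting it is handled with $|\xi|\le\langle\xi\rangle\le\langle X\rangle$; counting the powers yields the stated weights and the one-derivative gain in $x$.

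For (4), the membership $\lambda\in S_1(1,\langle x\rangle,g_0)$ was already observed inside the proof of Lemma~\ref{lemma commutator estimate}, where it followed from $\langle x\rangle\le\langle\xi\rangle$ on $\supp\chi(r)$ (hence $\langle X\rangle\sim\langle\xi\rangle$ there). For the remaining claim $\lambda\in S(1,g_1)$ I would argue from the explicit formula \eqref{lambda}: writing $\theta=\tfrac{x}{\langle x\rangle}\cdot\tfrac{\xi}{\langle\xi\rangle}$ one checks $\theta\in S(1,g_1)$, whence $\psi_j(\theta)\in S(1,g_1)$ and $\chi(r)\in S(1,g_1)$ by the chain rule. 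The one term that is not obviously in $S(1,g_1)$ is $\langle a\rangle^{-\mu}$, since a single $x$-derivative of it is only $O(1)$; but $\langle a\rangle^{-\mu}$ enters $\lambda$ solely through the product $\langle a\rangle^{-\mu}\psi_1(\theta)$, and $\psi_1(\theta)\ne0$ forces $|a|\gtrsim\langle x\rangle$, on which region every $x$-derivative of $\langle a\rangle^{-\mu}$ does gain a factor $\langle x\rangle^{-1}$; a Leibniz expansion then gives $\lambda\in S(1,g_1)$. Finally (8) is a corollary of (5)--(7) and the arithmetic of the refined classes: writing $P_0(x,\xi)=|\xi|^2+2A_0(x)\cdot\xi+|A_0(x)|^2+V_0(x)$, the term $|\xi|^2$ lies in $S(\langle X\rangle^2,g_0)$ with all $x$-derivatives zero, hence in $S_1(\langle X\rangle^2,\langle x\rangle,g_0)$; by (5)--(7) the other three summands lie in $S_1(\langle x\rangle^2,\langle x\rangle,g_0)$, $S_1(\langle x\rangle^2,\langle x\rangle,g_0)$ and $S_1(\langle x\rangle\langle X\rangle,\langle x\rangle,g_0)$, each of which is contained in $S_1(\langle X\rangle^2,\langle x\rangle,g_0)$ because $\langle x\rangle^2,\langle x\rangle\langle X\rangle\lesssim\langle X\rangle^2$; since these classes are vector spaces, summing concludes.

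I expect the $g_1$-bound for $\lambda$ in (4) to be the only genuinely delicate point: it is the one place where a naive derivative count fails and one must exploit the support localization produced by $\psi_1(\theta)$. Everything else is routine, although the bookkeeping for the refined classes $S_1(\,\cdot\,,\langle x\rangle,g_0)$ — keeping track that exactly one $x$-derivative improves the $\langle x\rangle$-power while further ones merely stay bounded — has to be carried out with a little care.
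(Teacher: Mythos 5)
Your verification is correct and is precisely the direct check the paper intends -- its own ``proof'' of this proposition is the single line ``This is easily checked.'' You rightly isolate the only non-routine points, namely recovering the missing zeroth- and first-order bounds $|A_0(x)|\lesssim\langle x\rangle$, $|\nabla V_0(x)|\lesssim\langle x\rangle$, $|V_0(x)|\lesssim\langle x\rangle^2$ from (A1)--(A2) by integration, and the $S(1,g_1)$ membership of $\lambda$, where the factor $\langle a\rangle^{-\mu}$ must indeed be controlled via the localization $|a|\gtrsim\langle x\rangle$ on $\supp\psi_1(\theta)$ (and on the supports of its derivatives); the only caveat is that item (3) should read $\langle\xi\rangle^{s}\in S(\langle\xi\rangle^{s},g_1)$ -- a typo in the statement as printed -- which is what your argument actually establishes and what is used later in the proof of Lemma \ref{L bounded}.
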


\begin{proof}
This is easily checked.
\end{proof}

\section{Definition of $P$ as an $m$-sectorial operator}\label{appendix m-sectorial}

Here we provide the operator theoretic details that were omitted in Section \ref{section complex-valued potentials}.
For simplicity, we assume in addition to (A1)--(A5) that $V_0\geq 0$. This ensures that the quadratic form of the unperturbed operator is nonnegative and makes the definition of the form sum easier. Note that the assumption is satisfied in the applications in Section \ref{section complex-valued potentials}.
Writing $L$ in \eqref{L} in the form
\begin{align*}
L=-2\I A_1\cdot\nabla_{A_0}-\I(\nabla_{A_0}\cdot A_1)-A_0\cdot A_1+A_1^2+V_1\equiv L_A+V_1,
\end{align*}
we define the quadratic forms
\begin{align}
\mathfrak{p}_0(u)&=\|\nabla_{A_0}u\|^2_{L^2}+\| V_0^{1/2}u\|^2,\label{form of P0}\\
\mathfrak{l}(u)&=\langle L_A u,u\rangle+\langle |V_1|^{1/2} u,V_1^{1/2}u\rangle,\label{form of L}
\end{align}
with domains
\begin{align*}
D(\mathfrak{p}_0)&=\set{u\in L^2(\R^n)}{\nabla_{A_0} u\in L^2(\R^n),\, V_0^{1/2}u\in L^2(\R^n)},\\
D(\mathfrak{l})&=\set{u\in L^2(\R^n)}{\nabla_{A_0} u\in L^2(\R^n),\, |V_1|^{1/2}u\in L^2(\R^n)},
\end{align*}
Here, $\nabla_{A_0}=\nabla+\I A_0(x)$ is the covariant derivative, and $V_1^{1/2}=\e^{\I\varphi}|V_1|^{1/2}$ for $V_1=|V_1|\e^{\I\varphi}$. We also use the magnetic Sobolev spaces
\begin{align*}
H^1_{A_0}(\R^n)&=\set{u\in L^2(\R^n)}{\nabla_{A_0} u\in L^2(\R^n)},\quad
\|u\|_{H^1_{{A_0}}}=\|u\|_{L^2}+\|\nabla_{A_0} u\|_{L^2}.
\end{align*}
We have the continuous and dense\footnote{By \cite[Theorem 7.22]{LiebLoss2001} $\mathcal{D}(\R^n)$ is dense in $H_{A_0}^1(\R^n)$.} embedding $D(\mathfrak{p}_0)\subset H_{A_0}^1(\R^n)$ when $D(\mathfrak{p}_0)$ is equipped with the norm
\begin{align}\label{form norm}
\|u\|_{+1}:=(\|u\|_{L^2}^2+\mathfrak{p}_0(u))^{1/2}.
\end{align}
The content of the following lemma is standard. In order to be self-contained, we give a proof.

\begin{lemma}\label{lemma relative form bounded} Assume Assumptions (A1)--(A5) and that $V_0\geq 0$. Then the following hold.
\begin{itemize}
\item[i)] $\mathfrak{p}_0$ is a closed and nonnegative form.

\item[ii)] $\mathfrak{l}$ is relatively bounded with respect to $\mathfrak{p}_0$.
with relative bound zero. 

\item[iii)] $\mathfrak{p}_0+\mathfrak{l}$, with $D(\mathfrak{p}_0+\mathfrak{l})=D(\mathfrak{p}_0)$, is a closed sectorial form.

\item[iv)] There exists a unique $m$-sectorial operator $P$ associated to the form $\mathfrak{p}_0+\mathfrak{l}$, with the property that $\dom(P)\subset D(\mathfrak{p}_0)$.
\end{itemize}

\end{lemma}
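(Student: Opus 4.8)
The plan is to establish the four items in turn, using the standard theory of sectorial forms (Kato, \cite{Kato1966}, Chapter VI). For \textbf{i)}, the form $\mathfrak{p}_0$ is manifestly nonnegative since both $\|\nabla_{A_0}u\|_{L^2}^2$ and $\|V_0^{1/2}u\|^2$ are nonnegative (here we use $V_0\geq 0$). Closedness amounts to showing that $D(\mathfrak{p}_0)$ is complete in the norm $\|\cdot\|_{+1}$; this follows because $D(\mathfrak{p}_0)$ is precisely the intersection of the magnetic Sobolev space $H^1_{A_0}(\R^n)$ with the weighted space $\{u : V_0^{1/2}u\in L^2\}$, each of which is complete, and the $\|\cdot\|_{+1}$ norm is equivalent to the graph-type norm of this intersection. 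Alternatively one observes that $\mathfrak{p}_0$ is the form of the nonnegative self-adjoint operator obtained as the Friedrichs extension of $P_0$ on $\mathcal{D}(\R^n)$, hence automatically closed.

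For \textbf{ii)}, the key point is the diamagnetic inequality $|\nabla|u|| \leq |\nabla_{A_0}u|$ pointwise a.e.\ (\cite{LiebLoss2001}), which gives the magnetic Sobolev embedding $H^1_{A_0}(\R^n)\hookrightarrow L^p(\R^n)$ for the same exponents $p$ as in the non-magnetic case, with the non-magnetic constants. I would then estimate each term of $\mathfrak{l}$ against $\mathfrak{p}_0(u)+\|u\|_{L^2}^2$ with a small constant in front of $\mathfrak{p}_0$: the term $\langle |V_1|^{1/2}u,V_1^{1/2}u\rangle$ is controlled by $\||V_1|^{1/2}u\|_{L^2}^2 = \int |V_1||u|^2$, which by Hölder and the magnetic Sobolev embedding (using $r\in[n/2,\infty]$, resp.\ $r>1$ when $n=2$) is bounded by $\|V_1\|_{L^r}\|u\|_{H^1_{A_0}}^{2\theta}\|u\|_{L^2}^{2(1-\theta)}$ for a suitable $\theta<1$, hence by $\eta\,\mathfrak{p}_0(u)+C_\eta\|u\|_{L^2}^2$ for any $\eta>0$ (Young's inequality); the magnetic-potential terms in $L_A$ are handled using $|A_1(x)|\lesssim\epsilon\langle x\rangle^{-1-\delta}$ and $|\nabla_{A_0}\cdot A_1|\lesssim\epsilon$ (from (A4a) or (A4b), the latter via the fractional estimates of Propositions~\ref{commutator bound A1 Taylor}--\ref{commutator bound A1 Schlag et al} or directly), so that $|\langle L_A u,u\rangle|\leq C\epsilon(\|\nabla_{A_0}u\|_{L^2}\|u\|_{L^2}+\|u\|_{L^2}^2)\leq \eta\,\mathfrak{p}_0(u)+C_\eta\|u\|_{L^2}^2$. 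Combining gives relative bound zero. The main obstacle is bookkeeping here: one must keep track that all the constants in the Sobolev-type estimates can indeed be made to multiply $\mathfrak{p}_0(u)$ by an arbitrarily small factor, which is exactly the interpolation/Young step.

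For \textbf{iii)}, this is the KLMN-type theorem: by \cite[Theorem VI.3.4 and Theorem VI.1.33]{Kato1966}, if $\mathfrak{l}$ is relatively bounded with respect to the closed nonnegative form $\mathfrak{p}_0$ with relative bound $<1$ (here, $=0$), then $\mathfrak{p}_0+\mathfrak{l}$ with domain $D(\mathfrak{p}_0)$ is a closed sectorial form; sectoriality follows because $\re(\mathfrak{p}_0+\mathfrak{l})(u)\geq (1-\eta)\mathfrak{p}_0(u)-C_\eta\|u\|_{L^2}^2$ and $|\im(\mathfrak{p}_0+\mathfrak{l})(u)| = |\im\mathfrak{l}(u)|\leq \eta\,\mathfrak{p}_0(u)+C_\eta\|u\|_{L^2}^2\lesssim \re(\mathfrak{p}_0+\mathfrak{l})(u)+\|u\|_{L^2}^2$, which bounds the numerical range in a sector. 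Finally \textbf{iv)} is the first representation theorem for sectorial forms, \cite[Theorem VI.2.1]{Kato1966}: to every densely defined closed sectorial form there corresponds a unique $m$-sectorial operator $P$ with $\dom(P)\subset D(\mathfrak{p}_0+\mathfrak{l})=D(\mathfrak{p}_0)$ and $\langle Pu,v\rangle = (\mathfrak{p}_0+\mathfrak{l})(u,v)$ for $u\in\dom(P)$, $v\in D(\mathfrak{p}_0)$; denseness of $D(\mathfrak{p}_0)$ in $L^2$ holds since $\mathcal{D}(\R^n)\subset D(\mathfrak{p}_0)$. This gives exactly the asserted extension.
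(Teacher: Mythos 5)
Your overall architecture matches the paper's: nonnegativity and completeness for i), the diamagnetic inequality and magnetic Sobolev embedding plus H\"older for ii), and then Kato's Theorem VI.3.4 and the first representation theorem for iii)--iv). Items i), iii), iv) are fine (for i), the "intersection of complete spaces" route still requires checking that the two $L^2$-limits of $\nabla_{A_0}u_n$ and $V_0^{1/2}u_n$ are identified correctly, which the paper does via continuity of $\nabla_{A_0}$ and $V_0^{1/2}$ on $\mathcal{D}'$; your Friedrichs-extension alternative is close to circular, since identifying the Friedrichs form domain with the explicit set \eqref{form domain P0} is essentially the closedness claim itself).

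There is, however, a genuine gap in ii) at the endpoint $r=n/2$, $n\geq 3$, which is allowed by (A5). Your bound $\int|V_1||u|^2\leq\|V_1\|_{L^r}\|u\|_{L^q}^2$ with $q=2r'$ hits the critical Sobolev exponent $q=2n/(n-2)$ when $r=n/2$, and then the Gagliardo--Nirenberg interpolation exponent is $\theta=1$: there is no $\|u\|_{L^2}$ factor to feed into Young's inequality, so the best you get is $\||V_1|^{1/2}u\|_{L^2}^2\leq C\|V_1\|_{L^{n/2}}\,\mathfrak{p}_0(u)+C\|u\|_{L^2}^2$ with a \emph{fixed} constant in front of $\mathfrak{p}_0$. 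Since no smallness of $\|V_1\|_{L^r}$ is assumed in this section (indeed, Section \ref{section complex-valued potentials} explicitly avoids any smallness hypothesis on $V_1$), this does not yield relative bound zero, nor even relative bound $<1$ in general. The paper's fix is a truncation: write $V_1=V_{1,R}+(V_1-V_{1,R})$ with $V_{1,R}=V_1\mathbf{1}\{|V_1|\leq R\}$; the bounded piece contributes only to the $\|u\|_{L^2}^2$ term (it is absorbed into $W$), while $\|V_1-V_{1,R}\|_{L^r}\to 0$ as $R\to\infty$ by dominated convergence, so the critical-exponent estimate applied to the tail produces an arbitrarily small coefficient of $\mathfrak{p}_0(u)$. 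You should add this step (your interpolation argument is only valid for $r>n/2$, and for all $r\in(1,\infty]$ when $n=2$). A minor further caveat: under (A4b), $\nabla\cdot A_1$ need not exist pointwise, so the divergence term in $L_A$ should be handled weakly by moving the derivative onto $u$ in the quadratic form rather than by a pointwise bound $|\nabla_{A_0}\cdot A_1|\lesssim\epsilon$.
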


\begin{proof}
i) $\mathfrak{p}_0$ is clearly nonnegative. To prove that it is closed, let $(u_n)_n\subset D(\mathfrak{p}_0)$ be a Cauchy sequence with respect to the form norm $\|\cdot\|_{+,1}$. Since $L^2(\R^n)$ is complete, it follows that there exist $u,v,w\in L^2(\R^n)$ such that 
\begin{align*}
u_n\to u,\quad \nabla_{A_0}u_n\to v,\quad V_0^{1/2}u_n\to w\quad \mbox{in  } L^2(\R^n).
\end{align*} 
Since $\nabla_{A_0}$ and $V_0^{1/2}$ are continuous from $\mathcal{D}'(\R^n)$ to $\mathcal{D}'(\R^n)$,
it follows that $\nabla_{A_0}u=v\in L^2(\R^n)$ and $V_0^{1/2}u=w\in L^2(\R^n)$. This shows that $D(\mathfrak{p}_0)$ is complete, i.e.\ $\mathfrak{p}_0$ is closed.

ii) In view of the assumption $V_0\geq 0$ it is sufficient to prove the Lemma for $V_0=0$; in this case, $D(\mathfrak{p}_0)= H_{A_0}^1(\R^n)$. Also, since $W$ is bounded, it does not affect the relative bound, so we may assume $W=0$ as well. Fix $\epsilon>0$  arbitrarily small.
By Sobolev embedding and the diamagnetic inequality
\begin{align}\label{diamagnetic inequality}
\| |u| \|_{H^1}\leq \|u\|_{H^1_{A_0}},
\end{align}
we have the continuous (and dense) embedding  
\begin{align}\label{magnetic Sobolev embedding}
H^1_{A_0}(\R^n)\subset L^q(\R^n).
\end{align}
H\"older's inequality and \eqref{magnetic Sobolev embedding} then yield the estimate
\begin{align}\label{V1 form bounded}
\||V_1|^{1/2} u\|_{L^2}\leq C \|V_1\|_{L^{\frac{q}{q-2}}}^{1/2}\|u\|_{H^1_{A_0}}=C \|V_1\|_{L^r}^{1/2}\|u\|_{H^1_{A_0}}.
\end{align}
By decomposing $V_1=V_{1,R}+(V_1-V_{1,R})$, with $V_{1,R}=V_1 \mathbf{1}\{x:|V_1(x)|\leq R\}$ and absorbing $V_{1,R}\in L^{\infty}(\R^n)$ into $W$,
we can assume that $C \|V_1\|_{L^r}\leq \epsilon/2$.

Next, we observe that since $A_1\in L^{\infty}(\R^n)$, we have
\begin{align}\label{A1grad form bounded}
|\langle L_A u, u\rangle|\leq \frac{\epsilon}{2}\|\nabla_{A_0}u\|^2_{L^2}+C_{\epsilon}\|A_1\|_{L^{\infty}}^2\|u\|^2_{L^2}
\end{align}
for some $C_{\epsilon}>0$. Altogether, \eqref{V1 form bounded}--\eqref{A1grad form bounded} imply that $D(\mathfrak{l})\subset D(\mathfrak{p}_0)$, and that we have the estimate
\begin{align*}
\mathfrak{l}(u)
\leq C\|u\|^2_{L^2}+\epsilon\, \mathfrak{p}_0(u),
\end{align*}
with a constant $C$ depending on $\epsilon$, $\|A_1\|_{L^{\infty}}$ and $\|V_1\|_{L^r}$.

iii) follows from \cite[Theorem VI.3.4]{Kato1965} and i)-ii).

iv) follows from the first representation theorem \cite[Theorem VI.2.1]{Kato1965}.
\end{proof}

\section{A more precise version of Theorem \ref{thm resolvent estimate X'X}}\label{Section A more precise version of Theorem}

We prove a more precise version of Theorem \ref{thm resolvent estimate X'X} for $P_0$
that takes into account the $z$-dependence of the constant.
It is most convenient to include this dependence in the definition of the spaces $X$ and $X'$, compare Section 4 in \cite{KochTataru2009}. The weighted spaces carry the norms
\begin{align*}
\|u\|_{X(z)}&=|\im z|^{\frac{1}{2}}\|u\|_{L^2}+\|\langle x\rangle^{-\frac{1+\mu}{2}}E_{1/2} u\|_{L^2}+| \im z|^{\frac{1}{2}-\frac{n}{2}\left(\frac{1}{2}-\frac{1}{q}\right)}\|u\|_{L^{q}},\\
\|f\|_{X'(z)}&=\inf_{f=f_1+f_2+f_3}\left(|\im z|^{-\frac{1}{2}}\|f_1\|_{L^2}
+\|\langle x\rangle^{\frac{1+\mu}{2}}E_{-1/2} f_2\|_{L^2}\right.\\
&\left.\qquad\qquad+|\im z |^{\frac{n}{2}\left(\frac{1}{2}-\frac{1}{q}\right)-\frac{1}{2}}\|f_3\|_{L^{q'}}\right).
\end{align*}

\begin{proposition}\label{proposition X is Banach}
$X(z)$ and $X'(z)$ are Banach spaces.
\end{proposition}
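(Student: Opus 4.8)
The plan is to show that $X(z)$ is complete by identifying it (for each fixed $z$ with $|\im z|\geq a$) as a closed subspace of a product of standard Banach spaces, and then to obtain completeness of $X'(z)$ from the fact that it is (isometrically) a quotient of a product of Banach spaces. Since $z$ is fixed and $|\im z|$ is a nonzero constant, the powers of $|\im z|$ are harmless scalar factors; the content is entirely in the three norm ingredients $L^2$, the weighted smoothing space, and $L^q$.

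First I would treat $X(z)$. Consider the map $J\colon u\mapsto (u,\ \langle x\rangle^{-\frac{1+\mu}{2}}E_{1/2}u,\ u)$ into the product $L^2(\R^n)\times L^2(\R^n)\times L^q(\R^n)$, equipped with the norm obtained by summing the three (rescaled) component norms. By definition $\|u\|_{X(z)}=\|Ju\|$, so $J$ is an isometry onto its range and it suffices to check that $\Ran J$ is closed. If $u_k$ is Cauchy in $X(z)$, then $u_k\to u$ in $L^2$, $u_k\to v$ in $L^q$, and $\langle x\rangle^{-\frac{1+\mu}{2}}E_{1/2}u_k\to w$ in $L^2$ for some $u,v,w$. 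Convergence in $L^2$ and in $L^q$ both imply convergence in $\mathcal{D}'(\R^n)$, so $v=u$. Moreover $E_{1/2}=e_{1/2}^W(x,D)$ and multiplication by $\langle x\rangle^{-\frac{1+\mu}{2}}$ are continuous operators on $\mathcal{S}'(\R^n)$ (by Proposition~\ref{prop. continuity on S} and duality, together with Proposition~\ref{Proposition membership to symbol classes}), hence $\langle x\rangle^{-\frac{1+\mu}{2}}E_{1/2}u_k\to \langle x\rangle^{-\frac{1+\mu}{2}}E_{1/2}u$ in $\mathcal{S}'(\R^n)$; comparing limits gives $w=\langle x\rangle^{-\frac{1+\mu}{2}}E_{1/2}u$. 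Therefore $u\in X(z)$ and $u_k\to u$ in $X(z)$, so $X(z)$ is complete. Completeness of the norm ingredients as genuine norms (not just seminorms) also needs a word: $\|u\|_{X(z)}=0$ forces $\|u\|_{L^2}=0$, hence $u=0$.

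For $X'(z)$ I would use the standard fact that a quotient of a Banach space by a closed subspace is Banach, in the form: if $Z$ is a Banach space, $Y$ a normed space, and $Q\colon Z\to Y$ a bounded linear surjection such that $\|y\|_Y=\inf\{\|\zeta\|_Z : Q\zeta=y\}$, then $Y$ is complete. Take $Z=L^2\times (\text{weighted }L^2)\times L^{q'}$, where the middle factor is $\{f_2 : \langle x\rangle^{\frac{1+\mu}{2}}E_{-1/2}f_2\in L^2\}$ with the norm $\|\langle x\rangle^{\frac{1+\mu}{2}}E_{-1/2}f_2\|_{L^2}$ — this middle factor is itself Banach, being the image of $L^2$ under the isomorphism $\langle x\rangle^{-\frac{1+\mu}{2}}E_{1/2}$ (note $E_{-1/2}$ and multiplication by $\langle x\rangle^{\frac{1+\mu}{2}}$ are injective on $\mathcal{S}'$, so this is a bijective correspondence) — and let $Q(f_1,f_2,f_3)=f_1+f_2+f_3\in\mathcal{D}'(\R^n)$. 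Then by definition $\|f\|_{X'(z)}=\inf\{\|(f_1,f_2,f_3)\|_Z : Qf=f_1+f_2+f_3\}$, and the quotient lemma gives that $X'(z)$ is complete, once one checks $\|\cdot\|_{X'(z)}$ is a genuine norm: if $\|f\|_{X'(z)}=0$, pick decompositions with total norm $\to 0$; then each piece $\to 0$ in its space and hence in $\mathcal{D}'$, so $f=0$.

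The only real obstacle is the bookkeeping needed to ensure all identifications take place inside a common topological vector space (here $\mathcal{D}'(\R^n)$ or $\mathcal{S}'(\R^n)$) so that ``comparing limits'' is legitimate: one must know that each of $L^2$, the weighted $L^2$ smoothing space, and $L^{q'}$ (resp.\ $L^q$) embeds continuously into $\mathcal{D}'(\R^n)$, which for the smoothing space uses the mapping properties of $E_{\pm1/2}$ on $\mathcal{S}'$ recorded in Appendix~\ref{Appendix psdos}. Everything else is the routine ``closed subspace / quotient'' argument and can be stated in a few lines. I expect the write-up to be short.
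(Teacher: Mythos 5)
Your treatment of $X(z)$ is essentially the paper's argument in different clothing: the paper reduces completeness of $X=Y\cap L^q$ to the compatibility of the components in $\mathcal{D}'(\R^n)$ plus the closability of $T=\langle x\rangle^{-\frac{1+\mu}{2}}E_{1/2}$ on $L^2$, and proves closability exactly as you do, from the continuity of $T$ on $\mathcal{S}'(\R^n)$. Your ``compare limits in $\mathcal{S}'$'' step is that closability argument, so this half is fine and matches the paper.

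For $X'(z)$ you diverge: the paper disposes of it in one line by regarding $X'(z)$ as the topological dual of $X(z)$ (``the dual of a normed space is always complete''), whereas you work directly with the inf-decomposition norm and invoke the quotient construction $Z/\ker Q$. That is a legitimate and in some ways more honest route, since it does not presuppose the identification of the sum space with the topological dual. However, it has a genuine gap at the point where you need the middle factor $M=\{f_2:\langle x\rangle^{\frac{1+\mu}{2}}E_{-1/2}f_2\in L^2\}$ to be a Banach space. You justify this by calling $M$ ``the image of $L^2$ under the isomorphism $\langle x\rangle^{-\frac{1+\mu}{2}}E_{1/2}$,'' but $\langle x\rangle^{-\frac{1+\mu}{2}}E_{1/2}$ is not the inverse of $\langle x\rangle^{\frac{1+\mu}{2}}E_{-1/2}$: the composition $E_{1/2}E_{-1/2}$ is $I$ plus a lower-order operator, not $I$. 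What your argument actually requires is that $S=\langle x\rangle^{\frac{1+\mu}{2}}E_{-1/2}$ be injective on $\mathcal{S}'$ \emph{and} map $M$ onto a closed subspace of $L^2$ (equivalently, that $S$ be boundedly invertible from $M$ onto its range). Injectivity of a Weyl operator with positive symbol is not automatic, and the parametrix one gets from ellipticity of $e_{-1/2}$ in $S(\langle X\rangle^{-1/2},g_0)$ only inverts $E_{-1/2}$ modulo a smoothing remainder, which does not by itself rule out a kernel or a non-closed range. None of the calculus facts collected in Appendix~\ref{Appendix psdos} supply this. So either this invertibility must be established separately, or one should fall back on the paper's duality shortcut, which sidesteps the completeness of $M$ entirely. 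The remaining pieces of your quotient argument (closedness of $\ker Q$ via the continuous embeddings into $\mathcal{D}'$, the norm property of $\|\cdot\|_{X'(z)}$) are correct.
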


\begin{proof}
The dual of a normed space is always complete, so we only need to show that $X(z)$ is complete. We may as well prove this for $|\im z|=1$, i.e.\ for the case $X(z)=X$. We recall that $X=Y\cap L^{q'}$, where $Y$ was defined in \eqref{definition of Y}. The spaces $Y$ and $L^{q'}$ are compatible in the sense that whenever $(u_n)_n\subset X$, $u\in Y$, $v\in L^{q'}$ satisfy $\|u_n-u\|_{Y}\to 0$ and $\|v_n-u\|_{L^{q'}}\to 0$, then $u=v\in X$. This is true because $u=v$ in $\mathcal{D}'$. It follows that $X$ is complete if and only if $Y$ is complete (since $L^{q'}$ is complete). Moreover, $Y$ is complete if and only if the operator $T=\langle x\rangle^{-\frac{1+\mu}{2}}E_{1/2}:L^2(\R^n)\to L^2(\R^n)$ with $\dom(T)=\mathcal{D}(\R^n)$ is closable. To prove the latter, let $(u_n)\subset \mathcal{D}(\R^n)$ be a sequence such that $u_n\to 0$ and $Tu_n\to v$ in $L^2(\R^n)$. We have to show that $v=0$. Indeed, since $T:\mathcal{S}'(\R^n)\to \mathcal{S}'(\R^n)$ is continuous \cite[Theorem 4.16]{Zworski2012}, it follows from the assumptions that $Tu_n\to 0$ in $\mathcal{S}'(\R^n)$. Since $\mathcal{S}(\R^n)$ is dense in $L^2(\R^n)$, it follows that $v=0$ in $L^2(\R^n)$.
\end{proof}

\begin{lemma}\label{lemma continuous embedding}
Assume Assumptions (A1)--(A5) and that $V_0\geq 0$.
Let $\mathfrak{p}_0$ be given by \eqref{form domain P0}. Then we have the dense and continuous embedding $D(\mathfrak{p}_0)\subset X(z)$, where $D(\mathfrak{p}_0)$ is equipped with the form norm $\|\cdot\|_{+1}$.
\end{lemma}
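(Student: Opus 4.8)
The goal is to show $D(\mathfrak{p}_0)\hookrightarrow X(z)$ densely and continuously, where $D(\mathfrak{p}_0)$ carries the norm $\|\cdot\|_{+1}=(\|u\|_{L^2}^2+\mathfrak{p}_0(u))^{1/2}$. Since the $z$-weights only rescale the three pieces of $\|\cdot\|_{X(z)}$ by powers of $|\im z|$ (bounded above and below once $|\im z|\geq a$), it suffices to work with $X=X(z)$ for $|\im z|=1$, i.e.\ to bound $\|u\|_{L^2}$, $\|\langle x\rangle^{-\frac{1+\mu}{2}}E_{1/2}u\|_{L^2}$ and $\|u\|_{L^q}$ by a constant times $\|u\|_{+1}$. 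The plan is: (i) the $L^2$ term is trivial; (ii) the $L^q$ term follows from the magnetic Sobolev embedding $H^1_{A_0}(\R^n)\subset L^q(\R^n)$ established via the diamagnetic inequality \eqref{diamagnetic inequality}--\eqref{magnetic Sobolev embedding} in the proof of Lemma~\ref{lemma relative form bounded}, together with the continuous embedding $D(\mathfrak{p}_0)\subset H^1_{A_0}(\R^n)$ noted before that lemma; (iii) the smoothing term is the substantive one and is where the positive-commutator machinery of Lemma~\ref{lemma commutator estimate} re-enters.

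For step (iii) I would argue as follows. First reduce to $u\in\mathcal{D}(\R^n)$, which is dense in $D(\mathfrak{p}_0)$ with respect to $\|\cdot\|_{+1}$ (this is exactly the density statement, and it follows from $\mathcal{D}(\R^n)$ being a form core; cf.\ the footnote citing \cite[Theorem 7.22]{LiebLoss2001}). For such $u$, apply the positive commutator estimate \eqref{eq. positive commutator}:
\begin{align*}
\|\langle x\rangle^{-\frac{1+\mu}{2}}E_{1/2}u\|_{L^2}^2\lesssim \langle -\I[P_0,\lambda^W]u,u\rangle+\|u\|_{L^2}^2=2\,\im\langle \lambda^W u, P_0 u\rangle+\|u\|_{L^2}^2,
\end{align*}
using self-adjointness of $\lambda^W$ (Proposition~\ref{Proposition selfadjointness of Weyl quantization}) and of $P_0$. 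Now $\lambda^W$ is $L^2$-bounded (Corollary~\ref{corollary boundedness on L2}), so $|\langle \lambda^W u,P_0 u\rangle|\leq C_\lambda\|u\|_{L^2}\|P_0 u\|_{L^2}$ — but $\|P_0 u\|_{L^2}$ is \emph{not} controlled by $\|u\|_{+1}$, only the \emph{form} $\mathfrak{p}_0(u)=\langle P_0 u,u\rangle$ is. So the pairing must be reorganized to land on the form level: write $\langle \lambda^W u, P_0 u\rangle$ and move "half" of $P_0$ onto $\lambda^W u$. Concretely, since $P_0=(-\I\nabla+A_0)^2+V_0$ with $V_0\geq 0$, one has $\langle \lambda^W u, P_0 u\rangle = \langle \nabla_{A_0}\lambda^W u,\nabla_{A_0}u\rangle+\langle V_0^{1/2}\lambda^W u, V_0^{1/2}u\rangle$, and then Cauchy--Schwarz gives the bound by $\mathfrak{p}_0(\lambda^W u)^{1/2}\mathfrak{p}_0(u)^{1/2}$. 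Thus it remains to show $\lambda^W$ is bounded on $D(\mathfrak{p}_0)$, i.e.\ $\mathfrak{p}_0(\lambda^W u)\lesssim \mathfrak{p}_0(u)+\|u\|_{L^2}^2$; equivalently $\nabla_{A_0}\lambda^W$ and $V_0^{1/2}\lambda^W$ map $D(\mathfrak{p}_0)$ to $L^2$ boundedly. This is a commutator computation: $\nabla_{A_0}\lambda^W = \lambda^W\nabla_{A_0}+[\nabla_{A_0},\lambda^W]$, and the commutator $[\nabla_{A_0},\lambda^W]$ has Weyl symbol in $S(1,g_0)$ by Corollary~\ref{Corollary commutator} and Proposition~\ref{Proposition membership to symbol classes} (since $\lambda\in S_1(1,\langle x\rangle,g_0)$ and the symbol $\xi+A_0(x)$ of $\nabla_{A_0}$ lies in the appropriate Doi class), hence is $L^2$-bounded; similarly $[V_0^{1/2},\lambda^W]$ — or rather one works with $V_0$ itself, using $\langle V_0\lambda^W u,\lambda^W u\rangle$ and the fact that $V_0\in S_1(\langle x\rangle^2,\langle x\rangle,g_0)$, so that $\lambda^W$ commutes with $\langle x\rangle$ up to lower order and one can bound $\|V_0^{1/2}\lambda^W u\|_{L^2}^2=\langle V_0\lambda^W u,\lambda^W u\rangle\lesssim \|V_0^{1/2}u\|_{L^2}^2+\|u\|_{L^2}^2$. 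This last estimate is the technical heart.

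\textbf{Main obstacle.} The crux is precisely showing that $\lambda^W$ preserves $D(\mathfrak{p}_0)$ with a quantitative bound — i.e.\ controlling $\mathfrak{p}_0(\lambda^W u)$ by $\|u\|_{+1}^2$. The difficulty is that $V_0$ can be unbounded (e.g.\ $|x|^2$), so one cannot simply use $L^2$-boundedness of $\lambda^W$; one needs the finer fact that $\lambda^W$ nearly commutes with the weight $\langle x\rangle$ (reflected in $\lambda\in S_1(1,\langle x\rangle,g_0)$, meaning $\partial_x\lambda$ gains a factor $\langle x\rangle^{-1}$), together with $V_0\in S_1(\langle x\rangle^2,\langle x\rangle,g_0)$, so that $[V_0^{1/2},\lambda^W]$ (or the symmetrized version $\lambda^W V_0\lambda^W - V_0\lambda^W{}^2$ up to $L^2$-bounded terms) is controlled on the form domain. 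Handling the magnetic gradient term is analogous but slightly more delicate because $\nabla_{A_0}$ is a first-order operator with symbol of order $\langle X\rangle$; one checks $\xi+A_0(x)\in S_1(\langle X\rangle+\langle x\rangle,\langle x\rangle, g_0)\subset S_1(\langle X\rangle^2,\langle x\rangle,g_0)$ (cf.\ Proposition~\ref{Proposition membership to symbol classes}(7)) and that the commutator drops two orders relative to the naive count, landing in $S(1,g_0)$. Once these two commutator bounds are in hand, the argument assembles routinely; density is immediate from the form-core property.
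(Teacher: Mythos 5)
Your reduction to the three pieces of the $X(z)$-norm, the treatment of the $L^2$ and $L^q$ terms via the diamagnetic/magnetic Sobolev embedding, and the density statement all match the paper. The divergence is in step (iii), and it is exactly at the point you yourself call the ``technical heart'' that the argument is incomplete. Because you apply the positive commutator estimate \eqref{eq. positive commutator} with the full operator $P_0=-\Delta_{A_0}+V_0$, you are forced to prove that $\lambda^W$ is bounded on the form domain, and in particular that $\|V_0^{1/2}\lambda^W u\|_{L^2}\lesssim\|V_0^{1/2}u\|_{L^2}+\|u\|_{L^2}$. Neither of the two routes you sketch for this closes within the paper's toolkit. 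The route via ``$\lambda^W$ commutes with $\langle x\rangle$ up to lower order'' would only give $\|V_0^{1/2}\lambda^W u\|\lesssim\|\langle x\rangle u\|+\|u\|$, and $\|\langle x\rangle u\|$ is \emph{not} controlled by $\mathfrak{p}_0(u)$: (A2) with $V_0\geq 0$ allows $V_0\equiv 0$ or $V_0$ bounded. The route via $[V_0^{1/2},\lambda^W]$ is more promising but falls outside the Weyl calculus of Appendix A: $V_0^{1/2}$ is in general not smooth where $V_0$ vanishes, so it lies in no symbol class; one would first need the Glaeser-type inequality $|\nabla V_0|\lesssim 1+V_0^{1/2}$ (to see $V_0^{1/2}\in\Lip$) and then an $L^2$-boundedness theorem for commutators of Lipschitz functions with general order-zero pseudodifferential operators, which is not Proposition \ref{commutator bound A1 Taylor} (that is stated only for $\langle D\rangle^s$). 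So as written there is a genuine gap, concentrated in the one estimate you did not prove.

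The paper dissolves this difficulty rather than solving it: the positive commutator estimate of Lemma \ref{lemma commutator estimate} depends only on the principal symbol $|\xi|^2$ — the contribution of $V_0$ to $[P_0,\lambda^W]$ lies in $\Op^W(S(1,g_0))$ and is absorbed into the $\|u\|_{L^2}^2$ term — so the same proof yields \eqref{eq. positive commutator} with $P_0$ replaced by $-\Delta_{A_0}$. One then bounds
\begin{align*}
\langle -\I[-\Delta_{A_0},\lambda^W]u,u\rangle\leq 2\|\nabla_{A_0}u\|_{L^2}\|\nabla_{A_0}\lambda^W u\|_{L^2}\lesssim\|u\|_{H^1_{A_0}}^2,
\end{align*}
using only the commutator bound $[\nabla_{A_0},\lambda^W]\in\Op^W(S(1,g_0))$ that you already have, and concludes via $D(\mathfrak{p}_0)\subset H^1_{A_0}(\R^n)$ (which holds since $V_0\geq 0$). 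The potential $V_0$ never enters the smoothing estimate, and no mapping property of $\lambda^W$ on the form domain is needed. If you want to salvage your version, the cleanest fix is precisely this observation: discard $V_0$ from the commutator before pairing, rather than propagating it through $\lambda^W$.
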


\begin{proof}
Since $D(\mathfrak{p}_0)\subset H^1_{A_0}(\R^n)$ continuously and densely, it is sufficient to prove that the embedding $H_{A_0}^1(\R^n)\subset X(z)$ is continuous and dense.
In view of \eqref{magnetic Sobolev embedding} and the fact that $\mathcal{D}(\R^n)$ is dense in $H_{A_0}^1(\R^n)$ \cite[Theorem 7.22]{LiebLoss2001}, it remains\footnote{Recall that $X(z)$ is defined as the closure of $\mathcal{D}(\R^n)$ in the norm $\|\cdot\|_{X(z)}$.} to prove the estimate
\begin{align}\label{embedding magnetic Sobolev space in smoothing space}
\|\langle x\rangle^{-\frac{1+\mu}{2}}E_{1/2}u\|_{L^2}\lesssim \|u\|_{H^1_{A_0}},\quad u\in \mathcal{D}(\R^n).
\end{align}
By (the proof of) Lemma \ref{lemma commutator estimate} we have an analogue of the commutator estimate~\eqref{eq. positive commutator} for the case $V_0=0$, namely
\begin{align}\label{eq. positive commutator A_0 only}
\|\langle x\rangle^{-\frac{1+\mu}{2}}E_{1/2} u\|_{L^{2}}^2\lesssim \langle -\I[-\Delta_{A_0},\lambda^W]u,u\rangle+\|u\|_{L^2}^2.
\end{align}
By the $L^2$-boundedness of $\lambda^W\in \Op^W(S(1,g_0))$ and of $[\nabla_{A_0},\lambda^W]\in \Op^W(S(1,g_0))$ (Proposition \ref{Proposition membership to symbol classes}, Lemma \ref{Lemma 3.4 Doi} and Corollary \ref{corollary boundedness on L2}), we estimate
\begin{align}\label{commutator is H1A0 bounded}
 \langle -\I[-\Delta_{A_0},\lambda^W]u,u\rangle\leq 2\|\nabla_{A_0}u\|_{L^2}\|\nabla_{A_0}\lambda^Wu\|_{L^2}
 &\lesssim \|u\|_{H^1_{A_0}}.
\end{align}
Combining \eqref{eq. positive commutator A_0 only}--\eqref{commutator is H1A0 bounded}, we get \eqref{embedding magnetic Sobolev space in smoothing space}.
\end{proof}

For technical reasons, we assume the following condition on the spectral projections $\Pi_{[k,k+1]}$ of $P_0$ in $n=2$ dimensions, 
\begin{align}\label{L2Linfty spectral projection estimate in n=2}
\|\Pi_{[k,k+1]}\|_{\mathcal{B}(L^2, L^{\infty})}\lesssim 1,\quad n=2.
\end{align}
This is only used in an interpolation argument, see (4.23) in \cite{KochTataru2009}. 
Note that the estimate does not follow from Strichartz estimates. However, for the harmonic oscillator and the Schr\"odinger operator with constant magnetic field, \eqref{L2Linfty spectral projection estimate in n=2} is known to be true \cite{KochTataru2005Hermite,KochRicci2007}; see also \cite{KochTataruZworski2007,SmithZworski2013} for corresponding results on Schr\"odinger operators with a Riemannian metric, but without magnetic field. We do not whether~\eqref{L2Linfty spectral projection estimate in n=2} is true under the general assumptions \eqref{assumptions on A_0}, \eqref{assumptions on V_0} on $P_0$. In any case, we could do without \eqref{L2Linfty spectral projection estimate in n=2} at the expense of an $\epsilon$-loss in the exponent of the $L^q$-part of the $X(z)$-norm.

\begin{theorem}\label{thm more convenient version of the main theorem}
Assume that $V_0\geq 0$ and that $A_0$, $V_0$ satisfy Assumptions \eqref{assumptions on A_0}, \eqref{assumptions on V_0}. If $n=2$, assume also that \eqref{L2Linfty spectral projection estimate in n=2} holds. Fix $a>0$. Then for any $z\in\C$ with $|\im z|\geq a$, the resolvent $(\widetilde{P}_0-z)^{-1}:L^2(\R^n)\to \dom(P_0)$ extends to a bounded operator in $\mathcal{B}(X(z)',X(z))$, and we have the estimate
\begin{align}\label{eq. resolvent estimate z dependent with inverse on left}
\|(\widetilde{P}_0-z)^{-1}u\|_{\mathcal{B}(X(z)',X(z))}\leq C_0(1+\|W\|_{L^{\infty}}).
\end{align}
The constant $C_0$ depends on $n$, $q$, $\mu$, $a$, and on finitely many seminorms $C_{\alpha}$ in \eqref{assumptions on A_0} and \eqref{assumptions on V_0}. 
\end{theorem}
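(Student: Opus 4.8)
The plan is to establish the \emph{a priori} bound
\begin{align*}
\|u\|_{X(z)}\leq C_0(1+\|W\|_{L^{\infty}})\,\|(\widetilde{P}_0-z)u\|_{X(z)'},\qquad u\in\mathcal{D}(\R^n),
\end{align*}
for $|\im z|\geq a$, and then to deduce the asserted extension $(\widetilde{P}_0-z)^{-1}\in\mathcal{B}(X(z)',X(z))$ exactly as in the duality argument at the end of the proof of Theorem~\ref{thm smoothing}: since $\mathcal{D}(\R^n)$ is a core for $\widetilde{P}_0$ and $(\widetilde{P}_0-z)^{-1}\in\mathcal{B}(L^2)$, the set $(\widetilde{P}_0-z)\mathcal{D}(\R^n)$ is dense in $L^2(\R^n)$; since moreover $X(z)\hookrightarrow L^2(\R^n)$ continuously and densely and $X(z)$ is reflexive (a finite intersection and sum of reflexive spaces, cf.\ Proposition~\ref{proposition X is Banach}), dualizing shows that $L^2(\R^n)$, hence $(\widetilde{P}_0-z)\mathcal{D}(\R^n)$, is dense in $X(z)'$; the \emph{a priori} bound then uniquely extends $(\widetilde{P}_0-z)^{-1}$. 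Writing $X(z)=Y(z)\cap L^q$ and $X(z)'=Y(z)'+L^{q'}$ with the obvious $z$-weighted analogues $Y(z),Y(z)'$ of \eqref{definition of Y}, the \emph{a priori} bound is, as in Section~\ref{section Proof main theorem, general case}, equivalent to a finite list of mapping estimates: $(\widetilde{P}_0-z)^{-1}$ should map each of the three weighted summands of $X(z)'$ boundedly into each of the three weighted summands of $X(z)$, with constant $\lesssim 1+\|W\|_{L^{\infty}}$.

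The estimates involving only $L^2$ and the smoothing norm $\langle x\rangle^{-\frac{1+\mu}{2}}E_{1/2}$ are precisely the four inequalities \eqref{eq. smoothing 1}--\eqref{eq. smoothing 4}, now for $\widetilde{P}_0$: the $z$-weights of $X(z),X(z)'$ are calibrated so that the powers $|\im z|^{-1},|\im z|^{-1/2},|\im z|^{-1/2},1$ appearing there, together with the $z$-independence of the smoothing part, account for exactly these mapping estimates. Their proofs carry over verbatim, resting on the positive commutator estimate \eqref{eq. positive commutator} of Lemma~\ref{lemma commutator estimate} (which uses only (A1)--(A2)), the self-adjointness of $\widetilde{P}_0$, and \eqref{trivial L2L2 estimate with imaginary part}; because they are driven purely by $L^2$-bounds, the bounded perturbation $W$ costs only the factor $1+\|W\|_{L^{\infty}}$, exactly as in the proof of Theorem~\ref{thm resolvent estimate X'X} for $\widetilde{P}_0$.

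The estimates involving $L^q$ require the sharp resolvent bounds
\begin{align*}
\|(\widetilde{P}_0-z)^{-1}\|_{L^{q'}\to L^q}&\lesssim(1+\|W\|_{L^{\infty}})|\im z|^{n(\frac12-\frac1q)-1},\\
\|(\widetilde{P}_0-z)^{-1}\|_{L^{q'}\to L^2}&\lesssim(1+\|W\|_{L^{\infty}})|\im z|^{\frac n2(\frac12-\frac1q)-1},
\end{align*}
together with the dual $L^2\to L^q$ bound. For $n\geq3$, Theorem~\ref{thm LpLp'} (in its $\widetilde{P}_0$-version, cf.\ the proof of Theorem~\ref{thm resolvent estimate X'X} for $\widetilde{P}_0$) applied at the endpoint $q_1=2n/(n-2)$ gives $\|(\widetilde{P}_0-z)^{-1}\|_{L^{q_1'}\to L^{q_1}}\lesssim 1+\|W\|_{L^{\infty}}$ and $\|(\widetilde{P}_0-z)^{-1}\|_{L^{q_1'}\to L^2}\lesssim(1+\|W\|_{L^{\infty}})|\im z|^{-1/2}$, with constants uniform over $|\im z|\geq a$; complex interpolation with the trivial bound $\|(\widetilde{P}_0-z)^{-1}\|_{L^2\to L^2}\leq|\im z|^{-1}$ then produces all of the displayed bounds for every $q\in[2,2n/(n-2)]$, the interpolation parameter being $\theta=1-n(\tfrac12-\tfrac1q)$. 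For $n=2$ there is no finite endpoint, and one must instead interpolate against a bound reaching $q=\infty$; assumption \eqref{L2Linfty spectral projection estimate in n=2} supplies the required $L^2\to L^{\infty}$ spectral cluster input, and the sharp exponent then follows via the interpolation argument of \cite[(4.23)]{KochTataru2009} (without \eqref{L2Linfty spectral projection estimate in n=2} one only obtains $|\im z|^{n(\frac12-\frac1q)-1+\epsilon}$). The remaining ``mixed'' mapping estimates — controlling the smoothing norm of $(\widetilde{P}_0-z)^{-1}f$ by the weighted $L^{q'}$-norm of $f$, i.e.\ the $z$-weighted analogues of \eqref{9 inequalities 2}--\eqref{9 inequalities 3} — follow from \eqref{eq. positive commutator}, the $L^q$-boundedness $\lambda^W\in\mathcal{B}(L^q)$ (Corollary~\ref{Corollary Lp boundedness}), and the bounds just listed, exactly as \eqref{9 inequalities 2} was deduced; \eqref{9 inequalities 3} follows from \eqref{9 inequalities 2} by the duality argument of Appendix~\ref{appendix proof duality argument}, with $z$-weights tracked throughout.

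Assembling the pieces yields the \emph{a priori} bound, and together with the density argument of the first paragraph and Proposition~\ref{proposition X is Banach} this proves \eqref{eq. resolvent estimate z dependent with inverse on left}. I expect the main obstacle to be the sharp $|\im z|$-dependence of the $L^{q'}\to L^q$ bound when $n=2$ — carrying out the endpoint-type interpolation through the spectral projection bound \eqref{L2Linfty spectral projection estimate in n=2} — whereas for $n\geq3$ the $L^q$ estimates reduce cleanly to Theorem~\ref{thm LpLp'} at the finite endpoint, complex interpolation, and bookkeeping of powers of $|\im z|$.
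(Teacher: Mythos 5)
Your proposal is correct and follows essentially the same route as the paper's proof: the $z$-weighted a priori estimate is assembled from the smoothing inequalities \eqref{eq. smoothing 1}--\eqref{eq. smoothing 4}, the endpoint $L^{q'}\to L^q$ bound interpolated against \eqref{trivial L2L2 estimate with imaginary part} (with \eqref{L2Linfty spectral projection estimate in n=2} supplying the missing endpoint when $n=2$), and the positive commutator estimate \eqref{eq. positive commutator} for the mixed bounds, after which the extension to $\mathcal{B}(X(z)',X(z))$ follows by the same core/density considerations the paper uses (the paper routes this through the graph norm of $P_0$ and bijectivity of $P_0-z$ on $\dom(P_0)$ rather than reflexivity, but the two arguments are interchangeable). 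The only cosmetic difference is that you obtain the sharp $L^{q'}\to L^q$ exponent for $n\geq 3$ by interpolating resolvent bounds directly, whereas the paper combines the interpolated $L^{q'}\to L^2$ bound \eqref{eq. Lq'L2 improved by interpolation} with the time-localized Strichartz estimate \eqref{z dependent almost Lq'Lq}; both give the same result.
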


\begin{proof}
We assume that $W=0$, i.e.\ that $\widetilde{P}_0=P_0$. The general case requires the same argument as in the proof of Theorem \ref{thm resolvent estimate X'X}.
We first prove the estimate
\begin{align}\label{eq. resolvent estimate z dependent}
\|u\|_{X(z)}\lesssim \| (P_0-z)u\|_{X(z)'},\quad u\in \mathcal{D}(\R^n).
\end{align}
The proof differs only slightly from that of Theorem \ref{thm resolvent estimate X'X} for $P_0$. For the convenience of the reader we give a sketch of the proof, highlighting the steps where the $z$-dependence plays a role.

$L^{q'}\to L^q$ estimate: 
For $n\geq 3$, interpolation between \eqref{eq. Lq'L2} with $q=2n/(n-2)$ and \eqref{trivial L2L2 estimate with imaginary part} produces the estimate
\begin{align}\label{eq. Lq'L2 improved by interpolation}
\|(P_0-z)^{-1}\|_{\mathcal{B}(L^{q'},L^2)}\lesssim |\im z|^{\frac{n}{2}\left(\frac{1}{2}-\frac{1}{q}\right)-1}.
\end{align}
For $n=2$, the dual of \eqref{L2Linfty spectral projection estimate in n=2} implies (as in the proof of \eqref{eq. Lq'L2}) that
\begin{align}\label{n=2 L1L2 estimate}
\|(P_0-z)^{-1}\|_{\mathcal{B}(L^{1},L^2)}\lesssim |\im z|^{-1/2}.
\end{align}
Interpolating \eqref{n=2 L1L2 estimate} with \eqref{trivial L2L2 estimate with imaginary part} yields \eqref{eq. Lq'L2 improved by interpolation} in the case $n=2$.
Inequality \eqref{almost resolvent estimate with Strichrtz} can be modified to
\begin{align}\label{z dependent almost Lq'Lq}
\|u\|_{L^{q}}\lesssim |\im z|^{\frac{n}{2}\left(\frac{1}{2}-\frac{1}{q}\right)-\frac{1}{2}}\|u\|_{L^2}+| \im z|^{n\left(\frac{1}{2}-\frac{1}{q}\right)-1}\|(P_0-z)u\|_{L^{q'}}.
\end{align}
To see this, one applies the Strichartz estimates \eqref{Strichartz estimates} to the function $v(x,t)=\e^{-\I t z}u(x)$ with $t$ localized to an interval of size $\mathcal{O}(| \im z|^{-1})$. Combining \eqref{eq. Lq'L2 improved by interpolation} and \eqref{z dependent almost Lq'Lq}, we obtain
\begin{align}\label{z dependent Lq'Lq}
\|u\|_{L^{q}}\lesssim |\im z|^{n\left(\frac{1}{2}-\frac{1}{q}\right)-1}\|(P_0-z)u\|_{L^{q'}}.
\end{align}

$Y'\to Y$ estimate: Denote by $Y(z)$ the space defined by the part of the $X(z)$-norm without the $L^q$-norm. From \eqref{eq. smoothing 1}--\eqref{eq. smoothing 4}, we get
\begin{align}\label{Y(z)'-Y(z) estimate}
\|u\|_{Y(z)}\lesssim \|(P_0-z)u\|_{Y(z)'}.
\end{align}

$L^{q'}\to Y$ estimate: From \eqref{eq. positive commutator}, \eqref{eq. Lq'L2 improved by interpolation}, \eqref{z dependent  Lq'Lq}, \eqref{trivial L2L2 estimate with imaginary part} and since $|\im z|\geq a$, it follows that
\begin{equation}\label{L^q'-Y(z) estimate}
\begin{split}
\|u\|_{Y(z)}^2&\lesssim\langle -\I[P_0-z,\lambda^W]u,u\rangle+|\im z|\|u\|_{L^2}^2\\ 
&\leq 2\|\lambda^Wu\|_{L^q}\|(P_0-z)u\|_{L^{q'}}+2|\im z|\|\lambda^Wu\|_{L^2}\|u\|_{L^2}+|\im z|\|u\|_{L^2}^2\\
&\leq C_{\lambda,q}^2 |\im z|^{-n\left(\frac{1}{2}-\frac{1}{q}\right)+1}\|u\|_{L^{q}}^2+| \im z|^{n\left(\frac{1}{2}-\frac{1}{q}\right)-1}\|(P_0-z) u\|_{L^{q'}}^2\\
&\quad+| \im z|(1+2C_{\lambda})\|u\|_{L^2}^2\\
&\lesssim|\im z|^{n\left(\frac{1}{2}-\frac{1}{q}\right)-1}\|(P_0-z) u\|_{L^{q'}}^2.
\end{split}
\end{equation}

In the third line we also used the Peter Paul inequality
\begin{align*}
2ab\leq \epsilon a^2+\frac{b^2}{\epsilon}\quad \mbox{with   }\epsilon= |\im z|^{-n\left(\frac{1}{2}-\frac{1}{q}\right)+1}.
\end{align*}
The previous estimates \eqref{eq. Lq'L2 improved by interpolation}, \eqref{z dependent Lq'Lq}, \eqref{Y(z)'-Y(z) estimate}, \eqref{L^q'-Y(z) estimate} in conjunction with the trivial $L^2$-estimate \eqref{trivial L2L2 estimate with imaginary part} prove
\eqref{eq. resolvent estimate z dependent}.

Next we prove \eqref{eq. resolvent estimate z dependent with inverse on left}. Consider the norm
\begin{align*}
|||u|||:=\|u\|_{X(z)}+\|(P_0-z)u\|_{X(z)'},\quad u\in \mathcal{D}(\R^n). 
\end{align*}
By \eqref{eq. resolvent estimate z dependent}, we have
\begin{align}\label{triple graph norm bound}
|||u|||\lesssim \|(P_0-z)u\|_{X(z)'}\leq \|(P_0-z)u\|_{L^2}\leq (1+|z|)\|u\|_{P_0},
\end{align}
where $\|u\|_{P_0}=\|u\|_{L^2}+\|P_0 u\|_{L^2}$ is the graph norm of $P_0$. Since $\mathcal{D}(\R^n)$ is a core for $P_0$, it follows from \eqref{triple graph norm bound} that
\begin{align*}
\overline{\mathcal{D}(\R^n)}^{|||\cdot|||}\supset \overline{\mathcal{D}(\R^n)}^{\|\cdot\|_{P_0}}=\dom(P_0).
\end{align*}
This implies, in particular, that \eqref{eq. resolvent estimate z dependent} holds for all $u\in \dom(P)$. Then, since $(P_0-z):\dom(P)\to L^2$ is bijective, it follows that 
\begin{align}\label{resolvent inequality for f in L2}
\|(P_0-z)^{-1}f\|_{X(z)}\lesssim\|f\|_{X(z)'}\quad \mbox{for all   }f\in L^2(\R^n).
\end{align}
Because $X(z)\subset L^2(\R^n)$ is an embedding (and therefore injective), it follows that $ L^2(\R^n)$ is dense in $X(z)'$. Therefore, \eqref{eq. resolvent estimate z dependent with inverse on left} follows from \eqref{resolvent inequality for f in L2} by density.
\end{proof}

\section{Proof of \eqref{9 inequalities 3}}\label{appendix proof duality argument}

We begin with the following lemma.

\begin{lemma}\label{lemma for proof of 9 inequalities 3}
Let $z\in \C\setminus\sigma(P_0)$, and let $u\in Y$ be such that $(P_0-z)u\in L^{q'}(\R^n)$. Then there exist $(u_{\epsilon})_{0\leq \epsilon\leq 1}\subset \mathcal{D}(\R^n)$ such that, as $\epsilon\to 0$,
\begin{itemize}
\item[i)] $u_{\epsilon}\to u$ in $Y$, and 
\item[ii)] $(P_0-z)u_{\epsilon}\to (P_0-z)u$ in $L^{q'}(\R^n)$. 
\end{itemize} 
\end{lemma}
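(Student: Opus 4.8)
The plan is to regularize $u$ in two stages: first mollify it into a Schwartz function, then multiply by a slowly varying spatial cutoff to land in $\mathcal{D}(\R^n)$. For the mollification I would use the operators $\chi_\epsilon^W$ from Lemma~\ref{lemma chiepsilon}, with $\chi_\epsilon(x,\xi)=\chi(\epsilon x,\epsilon\xi)$ and $\chi\in C_c^\infty(\R^{2n})$ equal to $1$ near the origin. The two elementary facts to record are: (a) since $\chi_\epsilon$ has compact support in $\R^{2n}$, its Weyl quantization maps $L^2(\R^n)$ into $\mathcal{S}(\R^n)$ (the kernel decays rapidly off the diagonal and is compactly supported in $\tfrac{x+y}{2}$); and (b) by the uniform $L^2$- and $L^{q'}$-boundedness of $\{\chi_\epsilon^W\}$ (Corollaries~\ref{corollary boundedness on L2}, \ref{Corollary Lp boundedness}) together with $\chi_\epsilon^W v\to v$ in $\mathcal{S}$ for $v\in\mathcal{S}$ (Proposition~\ref{prop. continuity on S} and Lemma~\ref{lemma chiepsilon}), the operators $\chi_\epsilon^W$ converge strongly to the identity on $L^2$ and on $L^{q'}$ ($1<q'<\infty$). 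I would then set $u_\epsilon:=\phi(\epsilon^N x)\,\chi_{\epsilon}^W u$ with $\phi\in C_c^\infty$, $\phi\equiv 1$ near $0$, and $N$ a large exponent fixed at the end; since for each fixed mollification scale the spatial-cutoff errors tend to $0$ as the cutoff radius tends to infinity (the function $\chi_\delta^W u$ being Schwartz), one may equally well run a two-parameter/diagonal argument, reducing everything to the convergences $\chi_\delta^W u\to u$ in $Y$ and $(P_0-z)\chi_\delta^W u\to(P_0-z)u$ in $L^{q'}$ as $\delta\to0$.

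For the $Y$-convergence, the $L^2$-component is immediate from (b), and the smoothing component reduces to $\chi_\delta^W(\langle x\rangle^{-\frac{1+\mu}{2}}E_{1/2}u)\to\langle x\rangle^{-\frac{1+\mu}{2}}E_{1/2}u$ in $L^2$ (clear, as $\langle x\rangle^{-\frac{1+\mu}{2}}E_{1/2}u\in L^2$) plus the commutator estimate $[\langle x\rangle^{-\frac{1+\mu}{2}}E_{1/2},\chi_\delta^W]\to 0$ in $\mathcal{B}(L^2)$: the commutator symbol lies in $S(\langle x\rangle^{-\frac{1+\mu}{2}}\langle X\rangle^{-1/2},g_0)\subset S(1,g_0)$ by Corollary~\ref{Corollary commutator} and Proposition~\ref{Proposition membership to symbol classes}, and it converges to $0$ there as $\delta\to0$ because every term carries a factor $\partial^\alpha\chi_\delta$, $|\alpha|\ge1$, which tends to $0$ in the relevant symbol class (Lemma~\ref{lemma chiepsilon}); then apply the continuity of the Weyl quantization on $S(1,g_0)$ (Theorem~\ref{theorem boundedness on L2}). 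For the $L^{q'}$-convergence of $(P_0-z)u_\epsilon$, I would use that $P_0=p^W$ exactly, with $p(x,\xi)=|\xi+A_0(x)|^2+V_0(x)\in S_1(\langle X\rangle^2,\langle x\rangle,g_0)$ (Proposition~\ref{Proposition membership to symbol classes}; the identity $P_0=p^W$ is a direct computation since $p$ is at most quadratic in $\xi$), and expand
\[
(P_0-z)\chi_\delta^W u=\chi_\delta^W\,(P_0-z)u+[P_0,\chi_\delta^W]u .
\]
Here $\chi_\delta^W\,(P_0-z)u\to(P_0-z)u$ in $L^{q'}$ by (b), while Lemma~\ref{Lemma 3.4 Doi}(2),(4) give $[P_0,\chi_\delta^W]=\frac{1}{\I}\{p,\chi_\delta\}^W+r_\delta^W$ with $r_\delta\in S(\langle x\rangle^{-1}\langle X\rangle^{-1},g_0)$ and $\{p,\chi_\delta\}\in S(\langle x\rangle^{-1}\langle X\rangle,g_0)$, both symbols supported where $|X|\sim\delta^{-1}$ (as $\nabla\chi$ vanishes near the origin); the task is to show that these error operators applied to $u$ tend to $0$ in $L^{q'}$. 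Finally, the outer cutoff introduces a commutator $[P_0,\phi(\epsilon^N\cdot)]\chi_\delta^W u$, which is $\mathcal{O}(\epsilon^N)$ times first-order derivatives of the Schwartz function $\chi_\delta^W u$, supported in $|x|\gtrsim\epsilon^{-N}$, hence tends to $0$ in $L^{q'}$ by Schwartz decay; coupling the scales then finishes the proof.

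The main obstacle I anticipate is precisely the $L^{q'}$-control of the inner commutator $[P_0,\chi_\delta^W]u$ as $\delta\to0$. Over $L^2$ this is soft (uniform boundedness plus strong convergence on a dense set), but with $q'\le2$ one cannot pass from $L^2$ to $L^{q'}$ by frequency localization alone, so one must combine the spatial localization of the error symbols (support in $|x|\lesssim\delta^{-1}$, which converts an $L^2$-bound into an $L^{q'}$-bound at the cost of a power $\delta^{-n(1/q'-1/2)}$) with the genuine smallness available: the factor $\langle x\rangle^{-1}$ in the symbol weights, which is $\sim\delta$ on the relevant support, and — should the endpoint $q=2n/(n-2)$ be borderline for the power count — the extra half-derivative regularity encoded in $u\in Y$ (rather than just $u\in L^2$), together with splitting off the high-frequency part of $u$. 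Making the powers of $\delta$ balance is where the hypotheses $0<\mu\le\delta$ and the precise gains in Lemma~\ref{Lemma 3.4 Doi} have to be exploited carefully.
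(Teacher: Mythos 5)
Your construction $u_\epsilon=\chi_\epsilon^W u$ (followed by an outer spatial cutoff) and your treatment of part i) coincide with the paper's proof: the commutator $[\langle x\rangle^{-\frac{1+\mu}{2}}E_{1/2},\chi_\epsilon^W]$ is uniformly bounded in an appropriate class $S(m,g)$ with $m\lesssim 1$, every term of its symbol carries a derivative of $\chi_\epsilon$ and hence tends to $0$ by Lemma \ref{lemma chiepsilon}, and the continuity of the quantization finishes that step. The decomposition $(P_0-z)\chi_\delta^Wu=\chi_\delta^W(P_0-z)u+[P_0,\chi_\delta^W]u$ for part ii) is also the paper's.

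However, you leave the decisive step of part ii) --- showing $[P_0,\chi_\delta^W]u\to0$ in $L^{q'}$ --- unproved, and the route you sketch for it does not close. The missing observation is that the commutator family is \emph{uniformly bounded on $L^{q'}$ at the operator level}, with no need to pass through $L^2$ or to trade spatial localization for integrability. Indeed, $\{P_0,\chi_\delta\}=\nabla_\xi P_0\cdot\nabla_x\chi_\delta-\nabla_xP_0\cdot\nabla_\xi\chi_\delta$: each derivative of $\chi_\delta$ contributes a factor $\delta$, while $|\nabla P_0|\lesssim\langle X\rangle$ and the support forces $|X|\sim\delta^{-1}$, so $\delta\langle X\rangle=\mathcal O(1)$ there and $\{P_0,\chi_\delta\}$ is a \emph{bounded family in $S(1,g_0)$} --- strictly better than the class $S(\langle x\rangle^{-1}\langle X\rangle,g_0)$ you extract from Lemma \ref{Lemma 3.4 Doi}(2), which is not contained in $S^0_{1,0}$ and would not give an $L^{q'}$ bound. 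Together with the remainder lying uniformly in $\Op^W(S(\langle X\rangle^{-1},g_0))$, Corollary \ref{Corollary Lp boundedness} yields $\|[P_0,\chi_\delta^W]\|_{\mathcal B(L^{q'})}\leq C$ uniformly in $\delta$ (this is \eqref{LpLp commutator bound P0,chi uniform}--\eqref{Poisson bracket bound uniform} in the paper). One then concludes by combining this uniform bound with the convergence $[P_0,\chi_\delta^W]v=[P_0,\chi_\delta^W-1]v\to0$ for Schwartz $v$ and a density argument. Your alternative --- converting an $L^2$ bound into an $L^{q'}$ bound at a cost $\delta^{-n(1/q'-1/2)}$ and hoping to recoup it from the weight $\langle x\rangle^{-1}$ --- at best reproduces uniform boundedness (the two powers of $\delta$ cancel exactly at the endpoint $q=2n/(n-2)$) and produces no smallness, so the appeal to the half-derivative in $Y$ and to a high/low frequency splitting remains a genuine gap rather than a completed argument.
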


\begin{proof}
The method of proof is similar as that of \cite[C.2.12]{Zworski2012} (the latter is easier since the symbol is quadratic). We first show that there exist $(u_{\epsilon})_{0\leq \epsilon\leq 1}\subset \mathcal{S}(\R^n)$ satisfying i)--ii). To this end, pick $\chi\in \mathcal{D}(\R^{2n})$ with $\chi\equiv 1$ in $B(0,1)$, and set $\chi_{\epsilon}(x,\xi)=\chi(\epsilon x,\epsilon\xi)$. Since $\chi_{\epsilon}^W(x,D)$ maps $\mathcal{S'}$ to $\mathcal{S}$ \cite[Theorem 4.1]{Zworski2012}, we have that $u_{\epsilon}=\chi_{\epsilon}^W(x,D)u\in \mathcal{S}$. By Lemma \ref{lemma chiepsilon} and Theorem \ref{theorem boundedness on Lp}, we have that
\begin{align}\label{uepsilon to u in Lp}
u_{\epsilon}\to u \quad\mbox{in  }L^p(\R^n)
\end{align}
as $\epsilon\to 0$. 
To finish the proof of i), 
it remains to show that
\begin{align*}
\lim_{\epsilon\to 0}\|\langle x\rangle^{-\frac{1+\mu}{2}}E_{1/2}(u-u_{\epsilon})\|_{L^2}=0.
\end{align*}
By \eqref{uepsilon to u in Lp} for $p=2$ and with $u$ replaced by $\langle x\rangle^{-\frac{1+\mu}{2}}E_{1/2}u$, it is sufficient to show that
\begin{align}\label{commutator T,chieps to 0}
\lim_{\epsilon\to 0}\|[\langle x\rangle^{-\frac{1+\mu}{2}}E_{1/2},\chi_{\epsilon}^W]u\|_{L^2}=0.
\end{align}
We use Lemma \ref{lemma chiepsilon} with $m_0(X)=\langle X\rangle^{\rho}$ for an arbitrary $\rho\in(0,1/2)$. Then, since the family (with respect to $0\leq \epsilon\leq 1$) of commutators in \eqref{commutator T,chieps to 0} is contained in a bounded subset of $\Op^W(S(\langle X\rangle^{-1/2+\rho},g_1))$ by Proposition \ref{Proposition membership to symbol classes} and Corollary \ref{Corollary commutator}, it is $L^2$-bounded by Corollary \ref{corollary boundedness on L2}. Moreover, the last part of Theorem \ref{theorem composition} implies that all seminorms of the commutator symbol tend to $0$ as $\epsilon\to 0$. The last statement in Theorem \ref{theorem boundedness on L2} then implies~\eqref{commutator T,chieps to 0}.

To prove ii), by the same argument as before, it is sufficient to show that 
\begin{align}\label{commutator P0,chieps to 0}
\lim_{\epsilon\to 0}\|[P_0,\chi_{\epsilon}^W]u\|_{L^{q'}}=0.
\end{align}
It is sufficient to prove \eqref{commutator P0,chieps to 0} for $u\in\mathcal{S}(\R^n)$, together with the uniform bound
\begin{align}\label{commutator P0,chieps uniform bound}
\|[P_0,\chi_{\epsilon}^W]u\|_{L^{q'}}\leq C\|u\|_{L^{q'}},\quad u\in L^{q'}(\R^n).
\end{align}
We may replace $\chi_{\epsilon}^W$ in \eqref{commutator P0,chieps to 0} by $\chi_{\epsilon}^W-1$. Then \eqref{commutator P0,chieps to 0} follows from Lemma \ref{lemma chiepsilon} and Proposition \ref{prop. continuity on S} for $u\in\mathcal{S}(\R^n)$ since the $L^{q'}$ norm may be estimated by some Schwartz seminorm. 
By Lemma \ref{lemma chiepsilon}, Lemma \ref{Lemma 3.4 Doi} and Remark \ref{remark Doi's class}, we have that $\{[P_0,\chi_{\epsilon}^W]-\{P_0,\chi_{\epsilon}\}^W/(2\I):0<\epsilon<1\}$ is a bounded subset of $\Op^W(S(\langle X\rangle^{-1},g_0))$. Hence, Corollary~\ref{Corollary Lp boundedness} implies that
\begin{align}\label{LpLp commutator bound P0,chi uniform}
\|[P_0,\chi_{\epsilon}^W]u-\{P_0,\chi_{\epsilon}\}^W/(2\I)u\|_{L^{q'}}\leq C\|u\|_{L^{q'}}.
\end{align}
Moreover, since $\epsilon\langle X\rangle=\mathcal{O}(1)$ on the support of $\chi_{\epsilon}$, we have that $\{\{P_0,\chi_{\epsilon}\}:0<\epsilon<1\}$ is a bounded subset of $S(1,g_0)$, and thus, by Corollary \ref{Corollary Lp boundedness} again, 
\begin{align}\label{Poisson bracket bound uniform}
\|\{P_0,\chi_{\epsilon}\}^Wu\|_{L^{q'}}\leq C\|u\|_{L^{q'}}.
\end{align}
Inequalities \eqref{LpLp commutator bound P0,chi uniform}--\eqref{Poisson bracket bound uniform} imply \eqref{commutator P0,chieps uniform bound}.

A a smooth cutoff procedure and a repetition of the above arguments proves the existence of a sequence $(u_{\epsilon})_{0\leq \epsilon\leq 1}\subset \mathcal{D}(\R^n)$ satisfying i)--ii).

\end{proof}

\begin{proof}[{\bf Proof of \eqref{9 inequalities 3}}]
From \eqref{9 inequalities 2} and Lemma \ref{lemma for proof of 9 inequalities 3} it follows that
\begin{align*}
\|u\|_{Y}\lesssim\|(P_0-z)u\|_{L^{q'}},\quad \mbox{for all   } u\in Y\cap (P_0-z)^{-1}(L^2\cap L^{q'}).
\end{align*}
This is equivalent to
\begin{align*}
\|(P_0-z)^{-1}f\|_{Y}\lesssim\|f\|_{L^{q'}},\quad \mbox{for all   } f\in (P_0-z)Y\cap L^2\cap L^{q'}.
\end{align*}
We can omit the intersection with $(P_0-z)Y$ since $\dom(P_0)\subset Y$ (proved below) and $(P_0-z)\dom(P_0)=L^2$ (since $z\in\C\setminus\sigma(P_0)$). 
Since $L^2\cap L^{q'}$ is dense in $L^{q'}$, there is a unique continuous extension $(P_0-z)^{-1}\in\mathcal{B}(L^{q'},Y)$.

Since $\dom(P_0)$ and $Y$ are the closures of $\mathcal{D}(\R^n)$ with respect to the graph norm of $P_0$ and $T=\langle x\rangle^{-\frac{1+\mu}{2}}E_{1/2}$, respectively, the inclusion $\dom(P_0)\subset Y$ would follow if we showed that
\begin{align*}
\|Tu\|_{L^2}\leq a\|P_0 u\|_{L^2}+b\|u\|_{L^2},\quad \mbox{for all  } u\in \mathcal{D}(\R^n),
\end{align*}
for some $a,b>0$. Such an estimate follows from the commutator bound \eqref{eq. positive commutator}.
\end{proof}

\noindent
{\bf Acknowledgements.} 
{\small J.-C. was supported by Schweizerischer Nationalfonds, SNF, through the post doc stipend P300P2\_\_147746. C.K. was supported by NSF grants DMS-1265249 and DMS-1463746. J.-C. would like to to acknowledge the hospitality of the University of Chicago. The authors would like to thank Ari Laptev for suggesting this topic and the referees for valuable comments and suggestions.}

\bibliographystyle{plain}
\bibliography{bibliography}

\end{document}